\def\parentheses#1{{\left( {#1} \right)}}
\def\of{\parentheses}
\def\Fraction#1/#2{\frac{#1}{#2}}
\def\fraction#1/#2{\tfrac{#1}{#2}}
\def\solidus#1/#2{%
    {{#1} \!\left/\!\vphantom{{#1}{#2}}\!\right. {#2}}%
}
\def\operator{\operatorname}
\def\script{\mathscr}
\def\blackboard{\mathbb}
\def\infinity{\infty}
\def\naturals{\blackboard N}
\def\integers{\blackboard Z}
\def\set#1{{\left\lbrace {#1} \right\rbrace}}
\def\setOf#1:#2{%
    \set{{#1} \,\left\vert\vphantom{{#1}{#2}}\right. {#2}}%
}
\def\emptySet{\varnothing}
\def\sequence#1{{\left\langle {#1} \right\rangle}}
\def\sequenceOf#1:#2{%
    \sequence{{#1} \,\left\vert\vphantom{{#1}{#2}}\right. {#2}}%
}
\def\familyOf#1:#2{{\parentheses{#1}_{#2}}}
\def\union{\cup}
\def\Union{\bigcup}
\def\intersect{\cap}
\def\Intersection{\bigcap}
\def\setMinus{\setminus}
\def\functionSpace#1->#2{{{#2}^{#1}}}
\def\function#1:#2->#3{{{#1} \colon {#2} \to {#3}}}
\def\mapsTo{\mapsto}
\def\map#1->#2{\parentheses{{#1} \mapsTo {#2}}}
\def\Restriction#1|#2{{\left.\! {#1} \right\vert_{#2}}} 
\def\presentation#1{{\left\langle {#1} \right\rangle}}
\def\presentationOf#1:#2{%
    \presentation{{#1} \,\left\vert\vphantom{{#1}{#2}}\right. {#2}}%
}
\def\interior#1{{\operator{int}\of{#1}}}
\def\collection{\script}
\def\interval#1#2,#3#4{{\left#1 {#2}, {#3} \right#4}}
\def\legal#1{{\operator{legal}\of{#1}}}
\def\illegal#1{{\operator{illegal}\of{#1}}}
\def\nondegenerate#1{{\operator{nondegenerate}\of{#1}}}
\def\mahavier#1#2{{\bigstar^{#1}_{i=0} {#2}}}
\def\Graph#1{{\Gamma\of{#1}}}
\def\p{\parentheses}
\newtheorem{presentment}{}[section]
\theoremstyle{plain}
\newtheorem{theorem}[presentment]{Theorem}
\newtheorem{proposition}[presentment]{Proposition}
\newtheorem{lemma}[presentment]{Lemma}
\newtheorem{corollary}[presentment]{Corollary}
\newtheorem{observation}[presentment]{Observation}
\theoremstyle{definition}
\newtheorem{definition}[presentment]{Definition}
\newtheorem{example}[presentment]{Example}
\newtheorem{question}[presentment]{Question}
\theoremstyle{remark}
\newtheorem*{note}{Note}
    \def\fraction#1/#2{\tfrac{#1}{#2}}
    \def\Fraction#1/#2{\frac{#1}{#2}}
\newcommand*{\BigFraction}{}
    \def\BigFraction#1/#2{\dfrac{\displaystyle{#1}}{\displaystyle{#2}}}
    \def\setOf#1:#2{\set{{#1} \,\left\vert\vphantom{{#1}{#2}}\right. {#2}}}
    \def\sequenceOf#1:#2{\sequence{{#1} \,\left\vert\vphantom{{#1}{#2}}\right. {#2}}}
    \def\function#1:#2->#3{{{#1} \colon {#2} \to {#3}}}
\newcommand{\MyTo}[1]{\mathbin{\,\tikz[baseline] \draw[-stealth,line width=.4pt] (#1,0.4ex) -- (0ex,0.4ex);}}
\newcommand{\Ilim}{%
    \mathchoice
      {\lim\limits_{\MyTo{3.0ex}}}
      {\lim\limits_{\MyTo{2.5ex}}}
      {\lim\limits_{\MyTo{2.0ex}}}
      {\lim\limits_{\MyTo{2.0ex}}}
}
\tikzset{node distance=2cm, auto}
\title{Shadowing in CR-Dynamical Systems} 
\author{Andrew Wood}
\begin{document}

\maketitle
\begin{abstract}
A CR-dynamical system is a pair $\p{X, G}$, where $X$ is a non-empty compact Hausdorff space with uniformity $\collection{U}$ and $G$ is a closed relation on $X$. In this paper we introduce the $\p{i, j}$-shadowing properties in CR-dynamical systems, which generalises the shadowing property from topological dynamical systems $\p{X, f}$. This extends previous work on shadowing in set-valued dynamical systems. 
\end{abstract}


\makeatletter
\renewcommand\@makefnmark{}
\makeatother
\footnotetext{Supported by the Marsden Fund Council from Government funding, administered by the Royal Society of New Zealand. This work appears in the MSc thesis of the author.}

\section{Introduction}

An important concept in classical dynamics is the shadowing property, introduced independently by Anosov~\cite{anosov} and Bowen~\cite{bowen}, with its inception coming from computers. As such, topological dynamical systems with the shadowing property are suited for computer simulations, yielding fruitful applications~\cite{KIRCHGRABER2000897, Urs_Kirchgraber_2004, STOFFER2001415, Stoffer1999RigorousVO}.  In classical dynamics, we wish to calculate trajectories of points within our space, but computers make computational errors. This derives the notion of pseudo-orbits which have an error term.  The shadowing property then aims to ask the question: given a pseudo-orbit, can we find an actual trajectory with close distance to the pseudo-orbit? Shadowing interacts nicely with other important dynamical properties. In a topological dynamical system with the shadowing property, many dynamical properties are equivalent~\cite{ ASKRI2023108663, KWIETNIAK201219, meddaugh_shadowing_recurrence}, allowing for richer dynamics.


Beyond topological dynamics, the shadowing property is widely studied in  hyperbolic dynamics, with Anosov's Shadowing Theorem~\cite[Corollary $5.3.2$]{brin_stuck_dynamical_systems} (also referred to as the Shadowing Lemma) being an important result.  Due to the importance of such a property, it is no surprise shadowing has been generalised to set-valued dynamical systems~\cite{Luo2020,  yin2024finitetypefundamentalobjects, specification, set_valued_weak_forms_shadowing, Zhou2022, yin_shadowing_property_set_valued_map, yu_rieger}. In this paper, we generalise the shadowing property to CR-dynamical systems $\p{X, G}$, where $X$ is a non-empty compact Hausdorff space with uniformity $\collection{U}$, and $G$ is a closed relation on $X$.
CR-dynamical systems generalise topological dynamical systems, allowing for points to have multiple (or even no) trajectories. We introduce several types of shadowing properties to CR-dynamical systems, further extending the idea of shadowing in set-valued dynamical systems~\cite{specification, yin_shadowing_property_set_valued_map, yu_rieger, yin2024finitetypefundamentalobjects, set_valued_weak_forms_shadowing, Luo2020, Zhou2022} to take into account multiple trajectories. 


  Banič, Erceg, Rogina, and Kennedy's paper ``Minimal dynamical systems with closed relations''~\cite{minimality_CR} formally introduces CR-dynamical systems, with a focus on compact metric spaces. Recently, this has led to the further study of CR-dynamical systems~\cite{nagar_transitivity_CR, transitivity_CR}, generalising transitivity and transitive points from topological dynamical systems to CR-dynamical systems.  Studying the dynamics of closed relations and set-valued functions has proved useful, including in applied areas such as economics. The Christiano-Harrison model in macroeconomics~\cite{economics} is one such example. 

In Section~\ref{section:uniform-spaces}, we go over preliminary definitions for CR-dynamical systems and uniform spaces. In Section~\ref{section:shadowing}, we review the shadowing property for topological dynamical systems, and which results we generalise. For CR-dynamical systems, we introduce the $\p{i, j}$-shadowing properties in Section~\ref{section:ij-shadowing}. Note shadowing in set-valued dynamics corresponds to the $\p{2, 2}$-shadowing property. We introduce three other types of shadowing.  We then discuss the following topics: 

\begin{itemize}
\item Characterisation when $\nondegenerate{G}$ is finite (Section~\ref{section:finite}).
\item When $G$ contains the diagonal (Section~\ref{section:diagonal}). 
\item Shadowing in the infinite Mahavier product (Section~\ref{section:mahavier}). 
\item Shadowing in $G^{-1}$ (Section~\ref{section:inverse}).
\end{itemize}

Lastly, we conclude this paper with Section~\ref{section:future}, discussing possible directions for future work. 

\section{Preliminaries}\label{section:uniform-spaces}

In this section, we provide basic definitions, notation, and results used throughout this paper. All results for topological dynamical systems $\p{X, f}$ in this section are well-known, and so we omit their proofs.  In this paper, we consider both non-empty compact metric spaces, and non-empty compact Hausdorff spaces with uniformity. We primarily focus on the former for examples, and the latter for results.  Throughout this section, let $X$ be a non-empty compact Hausdorff space. 

\begin{note}
In our paper, we use ordinal $\omega$ in place of $\naturals$. Also, for each $m \in \omega$, we define {$[m] = \set{0, \ldots, m}$}.  
\end{note}

\subsection{CR-dynamical systems}
We recall a {\color{blue} \emph{topological dynamical system}} is a pair $\p{X, f}$, where $f : X \to X$ is a continuous self-map on our space $X$. We let {\color{blue}$\Graph{f} = \setOf{\p{x, y}}:{y = f\of{x}}$} denote the {\color{blue}\emph{graph of our self-map $f$}}. 

We further recall a {\color{blue} \emph{set-valued dynamical system}} is a pair $\p{X, F}$, where $F : X \to 2^X$ is an upper semi-continuous (usc) set-valued function on our space $X$. Note {\color{blue} $2^X$} denotes the collection of non-empty closed subsets of $X$. We let {\color{blue}$\Graph{F} = \setOf{\p{x, y}}:{y \in F\of{x}}$} denote the {\color{blue}\emph{graph of our set-valued function $F$}}. 

The following is a well-known result, and generalises to usc set-valued functions (see Theorem $1.2$ in~\cite{ingram}). 

\begin{theorem}\label{thm:motivation}
Let $X$ be a non-empty compact Hausdorff space, and $f : X \to X$ be a function. Then, $f$ is a continuous self-map on $X$ if, and only if, $\Graph{f}$ is closed in $X \times X$. 
\end{theorem}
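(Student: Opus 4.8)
The plan is to prove the two implications separately, isolating exactly where each hypothesis on $X$ is used: the forward direction will need only that the codomain is Hausdorff, while the reverse direction will crucially rely on compactness of the codomain.

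For the forward direction, suppose $f$ is continuous. I would introduce the map $\function{g}:{X \cross X}->{X \cross X}$ given by $\p{x, y} \mapsto \p{\image{x}{f}, y}$, which is continuous because $f$ and the two coordinate projections are. Since $X$ is Hausdorff, the diagonal $\Delta = \setOf{\p{z, z}}:{z \in X}$ is closed in $X \cross X$. One then checks the set-theoretic identity $\Graph{f} = \preimage{\Delta}{g}$, after which the graph is closed as the preimage of a closed set under a continuous map. This step is routine and needs no compactness.

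For the reverse direction, suppose $\Graph{f}$ is closed, and let $\function{\pi}:{X \cross X}->{X}$ denote projection onto the first coordinate. The key tool is that projection along a compact factor is a closed map, so since the second factor $X$ is compact, $\pi$ carries closed sets to closed sets. Given any closed $C \subseteq X$, the set $\Graph{f} \intersect \p{X \cross C}$ is closed, being the intersection of two closed sets, and I would verify that $\image{\Graph{f} \intersect \p{X \cross C}}{\pi} = \preimage{C}{f}$, using that the graph meets each vertical fibre $\set{x} \cross X$ in the single point $\p{x, \image{x}{f}}$. Hence $\preimage{C}{f}$ is closed for every closed $C$, which is exactly continuity of $f$.

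The main obstacle is the reverse direction: the forward implication holds for any Hausdorff codomain, but closedness of the graph does not in general force continuity, and what rescues us here is the closed-projection characterisation of compactness (the tube lemma, in Kuratowski's form). I would therefore state that fact explicitly, or supply a short tube-lemma argument that $\pi$ is closed, since it is the one place where compactness is indispensable.
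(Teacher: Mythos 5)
Your proof is correct. The paper does not actually supply a proof of this theorem --- it is stated as well known, with a pointer to Theorem $1.2$ of Ingram's book --- so there is no in-paper argument to compare against; your two halves are the standard ones (the closed diagonal of the Hausdorff codomain pulled back under $\p{x, y} \mapsto \p{f\of{x}, y}$ for the forward direction, and closedness of the projection along the compact factor applied to $\Graph{f} \intersect \p{X \cross C}$ for the reverse), and both are carried out correctly, including the correct identification of where Hausdorffness and compactness are each used.
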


By Theorem~\ref{thm:motivation}, it is natural to consider non-empty closed subsets of $X \times X$ in place of continuous functions. Indeed, as noted in~\cite{transitivity_CR}, this is useful when one wants to consider $\p{X, f^{-1}}$ and $f^{-1}$ is not well-defined. Generalising to set-valued functions is not sufficient. For one may wish to consider $\p{X, F^{-1}}$, but $F^{-1}$ is not well-defined. It is natural to consider $\Graph{F}^{-1} := \setOf{\p{x, y}}:{\p{y, x} \in \Graph{F}}$, but $\Graph{F}^{-1}$ need not correspond to the graph of a set-valued function on $X$. Observe, however, $\Graph{F}^{-1}$ is a non-empty closed subset of $X \times X$. To this end, we finally define CR-dynamical systems. 

A {\color{blue}\emph{relation}} on $X$, is a non-empty subset of $X \times X$. A {\color{blue}\emph{CR-dynamical system}} is a pair $\p{X, G}$, where $G$ is a closed relation on $X$. We say a CR-dynamical system $\p{X, G}$ is an {\color{blue} \emph{SV-dynamical system}}, if $G = \Graph{F}$ for some usc set-valued function $F : X \to 2^X$.  We define further concepts for CR-dynamical systems.  See~\cite{minimality_CR, transitivity_CR} for further details. 

Suppose $\p{X, G}$ and $\p{Y, H}$ are CR-dynamical systems. We say that {\color{blue} \emph{$G$ and $H$ are topological conjugates}}, if there exists a homeomorphism $\varphi : X \to Y$, called a {\color{blue} \emph{topological conjugacy}}, such that $\p{x, y} \in G$ if, and only if, $\p{\varphi\of{x}, \varphi\of{y}} \in H$. 

We now go over notation for relations, which are not necessarily closed, on our space. With the exception of indexing differences, and generalising a result from compact metric spaces to compact Hausdorff spaces, one may refer to~\cite{transitivity_CR}. Suppose $G$ is a relation on $X$. Let {\color{blue} $G^{-1}$} denote the set {$\setOf{\p{x, y}}:{\p{y, x} \in G}$}. If $G = G^{-1}$, we say $G$ is {\color{blue} \emph{symmetric}}. Observe that if $\p{X, G}$ is a CR-dynamical system, $\p{X, G^{-1}}$ is a CR-dynamical system.

For each positive integer $m$,  we call
\[
{\mahavier{m}{G} = \setOf{\sequence{x_0, x_1, \ldots, x_{m}} \in \prod_{n=0}^m X}:{\text{for each } n \in [m-1], \p{x_n, x_{n+1}} \in G}}
\]
the {\color{blue} \emph{$m$-th Mahavier product of $G$}}, and we call 
\[
{\mahavier{\infinity}{G} = \setOf{\sequenceOf{x_n}:{n \in \omega} \in \prod_{i=0}^\infinity X}:{\text{for each } n \in \omega, \p{x_n, x_{n+1}} \in G}}
\]
the  {\color{blue} \emph{infinite Mahavier product of $G$}}. As in~\cite{nagar_transitivity_CR}, we let {$\mahavier{0}{G} = X$}.  Define by $\sigma_G^+ : X_G^+ \to X_G^+$ the {\color{blue} \emph{shift map}} on the infinite Mahavier product of $G$, that is,  
\[
\sigma_G^+\of{\sequence{x_0, x_1, x_2, \ldots}} = \sequence{x_1, x_2, \ldots}. 
\]
We call $\p{X_G^+, \sigma_G^+}$ the {\color{blue} \emph{Mahavier dynamical system}} of $\p{X, G}$. If $\p{X, d}$ is a compact metric space, then the metric $\rho$ on $X_G^+$ is given by 
\[
\rho\of{\sequenceOf{x_k}:{k \in \omega}, \sequenceOf{y_k}:{k \in \omega}} = \sum_{k = 0}^{\infinity} \Fraction d\of{x_k, y_k} / {2^{k+1}}. 
\] 

\begin{lemma}\label{lem:mahavier-closed}
Suppose $\p{X, G}$ is a CR-dynamical system. Then, $\mahavier{n}{G}$ is closed in $\prod_{i=0}^n X$ for each $n \in \omega$. 
\end{lemma}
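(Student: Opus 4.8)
The plan is to realise $\mahavier{n}{G}$ as a finite intersection of preimages of the closed set $G$ under continuous maps, and then to appeal to the elementary fact that a finite intersection of closed sets is closed. The base case $n = 0$ is immediate: by convention $\mahavier{0}{G} = X$, which is trivially closed in $\prod_{i=0}^0 X = X$. So I may assume $n \geq 1$ and fix such an $n$.

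For each $k \in [n-1]$, I would introduce the map $\pi_k : \prod_{i=0}^n X \to X \times X$ defined by $\pi_k\p{\sequence{x_0, \ldots, x_n}} = \p{x_k, x_{k+1}}$. Each $\pi_k$ is continuous, being the pairing of the two coordinate projections onto the $k$-th and $\p{k+1}$-th factors, which are continuous by the definition of the product topology on $\prod_{i=0}^n X$.

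The crux is the observation that the defining condition of the Mahavier product decouples across coordinates: a tuple $\sequence{x_0, \ldots, x_n} \in \prod_{i=0}^n X$ belongs to $\mahavier{n}{G}$ precisely when $\p{x_k, x_{k+1}} \in G$ for every $k \in [n-1]$, i.e.\ precisely when $\pi_k\p{\sequence{x_0, \ldots, x_n}} \in G$ for every such $k$. This yields
\[
\mahavier{n}{G} = \Intersection_{k \in [n-1]} \pi_k^{-1}\of{G}.
\]
Since $G$ is closed in $X \times X$ and each $\pi_k$ is continuous, each preimage $\pi_k^{-1}\of{G}$ is closed in $\prod_{i=0}^n X$; and as $[n-1]$ is finite, the right-hand side is a finite intersection of closed sets, hence closed. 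This completes the argument.

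I do not expect a genuine obstacle, as the statement is purely formal. The only point deserving care is the continuity of the maps $\pi_k$, which relies solely on the definition of the product topology; in particular the conclusion uses neither a metric nor the Hausdorff hypothesis on $X$, only that $G$ is a closed relation and that coordinate projections are continuous.
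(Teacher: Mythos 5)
Your argument is correct and is precisely the routine argument the paper leaves implicit (the lemma is stated without proof as a standard fact): writing $\mahavier{n}{G}$ as the finite intersection $\Intersection_{k \in [n-1]} \pi_k^{-1}\of{G}$ of preimages of the closed set $G$ under the continuous coordinate-pair maps, plus the trivial base case $\mahavier{0}{G} = X$. No gaps; your closing remark that neither a metric nor Hausdorffness is needed is also accurate.
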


Suppose $G$ is a relation on $X$. Let $x \in X$ and $A \subseteq X$. We define
\begin{itemize}
\item {$G\of{x} = \setOf{y \in X}:{\p{x, y} \in G}$;}
\item {$G\of{A} = \Union_{x \in A} G\of{x}$.}
\end{itemize}
Furthermore, for each positive integer $n$, we define
\[
{G^n\of{x} = \setOf{y \in X}:{\text{there exists } \sequenceOf{x_k}:{k \in [n]} \in \mahavier{n}{G} \text{ such that } x_0 = x \text{ and } x_n = y}}
\]
and
\[
{G^n\of{A} = \Union_{x \in A} G^n\of{x}}. 
\]
We then define ${G^n = \setOf{\p{x, y} \in X \times X}:{y \in G^n\of{x}}}$ for each positive integer $n$. Recall the {\color{blue} \emph{diagonal}} of $X$, is the set $\Delta_X := \setOf{\p{x, x}}:{x \in X}$. We use the convention $G^0 = \Delta_X$. 

If $\p{X, G}$ is a CR-dynamical system, we assume $G^n$ is non-empty for each positive integer $n$ throughout this paper. Observe this is equivalent to $\mahavier{\infinity}{G} \neq \emptySet$. Indeed, this means $\p{X, G^n}$ is a CR-dynamical system for each positive integer $n$. 

We now recall that the ${\color{blue} \emph{trajectory}}$ of a point $x \in X$ in a topological dynamical system $\p{X, f}$, is the sequence {$\sequence{x, f\of{x}, f^2\of{x}, \ldots}$}.  A {\color{blue} \emph{trajectory}} of $x \in X$ in a CR-dynamical system $\p{X, G}$, is a sequence ${\sequenceOf{x_n}:{n \in \omega}} \in \mahavier{\infinity}{G}$ such that $x_0 = x$. We let {\color{blue} $T_G^+\of{x}$} denote the set of trajectories of $x$ in $\p{X, G}$. We say $x \in X$ {\color{blue} \emph{is a legal point of $\p{X, G}$}}, if $T_G^+\of{x}$ is non-empty. Otherwise, we say $x \in X$ {\color{blue} \emph{is an illegal point of $\p{X, G}$}}. We let {\color{blue} $\legal{G}$} denote the set of legal points, and {\color{blue} $\illegal{G}$} denote the set of illegal points in $\p{X, G}$. We say $x \in X$ is {\color{blue} \emph{degenerate}}, if $\p{x, y} \notin G$ for each $y \in X$. Otherwise, we say $x \in X$ is {\color{blue} \emph{non-degenerate}}. We denote by $\nondegenerate{G}$ the set of non-degenerate points in a CR-dynamical system $\p{X, G}$. Note $\nondegenerate{G}$ is closed.   

We conclude this sub-section with a generalisation of Theorem $3.12$ in~\cite{transitivity_CR} to non-empty compact Hausdorff spaces (the implication from $\p{1}$ to $\p{2}$ uses sequential compactness). For all other basic results related to CR-dynamical systems, refer to~\cite{transitivity_CR} and ~\cite{minimality_CR}, such as $\illegal{G}$ being open and $\legal{G}$ being closed in $X$. 

\begin{theorem}\label{thm:illegal-characterisation}
Let $\p{X, G}$ be a CR-dynamical system and $x \in X$. The following are equivalent. 
\begin{enumerate}
\item $x \in \illegal{G}$.
\item There exists a positive integer $n$ such that $G^n\of{x} = \emptySet$.
\end{enumerate}
\end{theorem}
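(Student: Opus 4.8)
The plan is to prove the equivalence by showing the contrapositive of each direction, working with the characterisation of illegal points via the nonexistence of trajectories. Recall that $x \in \illegal{G}$ precisely when $T_G^+\of{x} = \emptySet$, i.e.\ there is no sequence $\sequenceOf{x_n}:{n \in \omega} \in \mahavier{\infinity}{G}$ with $x_0 = x$.

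\medskip

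\emph{Direction $\p{2} \Rightarrow \p{1}$.} This is the easy direction. Suppose there exists a positive integer $n$ with $G^n\of{x} = \emptySet$. If $x$ were legal, there would be a trajectory $\sequenceOf{x_k}:{k \in \omega} \in \mahavier{\infinity}{G}$ with $x_0 = x$. Truncating this trajectory to its first $n+1$ terms yields an element $\sequenceOf{x_k}:{k \in [n]} \in \mahavier{n}{G}$ with $x_0 = x$, so by definition $x_n \in G^n\of{x}$, contradicting $G^n\of{x} = \emptySet$. Hence no trajectory exists and $x \in \illegal{G}$.

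\medskip

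\emph{Direction $\p{1} \Rightarrow \p{2}$.} I would prove the contrapositive: assume $G^n\of{x} \isNonempty$ for every positive integer $n$, and construct a trajectory, thereby showing $x \in \legal{G}$. For each $n$, nonemptiness of $G^n\of{x}$ gives a finite chain $\sequenceOf{x^{(n)}_k}:{k \in [n]} \in \mahavier{n}{G}$ with $x^{(n)}_0 = x$. The goal is to extract from this family of ever-longer finite chains a single infinite chain starting at $x$. As flagged in the surrounding text, this is where sequential compactness enters, and it is the main obstacle: one cannot simply take a ``limit'' of the finite chains termwise without care, because the chains live in product spaces of growing dimension. The clean approach is to pass to the product space $\prod_{i=0}^{\infinity} X$, which is compact by Tychonoff; embed each finite chain (extended arbitrarily, say by repeating a fixed point in later coordinates) as a point of this product, and extract a convergent subnet or—using that compact Hausdorff spaces need not be sequentially compact in general but the hypothesis invokes sequential compactness—a convergent subsequence whose limit $\sequenceOf{y_k}:{k \in \omega}$ has $y_0 = x$.

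\medskip

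The final step is to verify the limit point is genuinely a trajectory, i.e.\ $\p{y_n, y_{n+1}} \in G$ for each $n \in \omega$. Fix $n$; for all sufficiently long chains in the convergent subsequence, the coordinates $x^{(m)}_n$ and $x^{(m)}_{n+1}$ satisfy $\p{x^{(m)}_n, x^{(m)}_{n+1}} \in G$, and these coordinates converge to $\p{y_n, y_{n+1}}$ in $X \times X$. Since $G$ is closed, the limit pair lies in $G$, so $\sequenceOf{y_k}:{k \in \omega} \in \mahavier{\infinity}{G}$ is a trajectory with $y_0 = x$, giving $x \in \legal{G}$. The crux is thus the compactness-plus-closedness argument extracting an infinite admissible chain from arbitrarily long finite ones; everything else is bookkeeping with the definitions of $G^n$ and $\mahavier{\infinity}{G}$.
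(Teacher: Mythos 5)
Your proposal is correct, and both directions go through; the easy direction $\p{2} \Rightarrow \p{1}$ (which the paper simply delegates to Theorem $3.12$ of the cited reference) is handled cleanly by truncation. For the substantive direction $\p{1} \Rightarrow \p{2}$, your route differs from the paper's in implementation though not in the underlying principle. You embed the arbitrarily long finite chains as points of the compact product $\prod_{i=0}^{\infinity} X$, extract a convergent subnet, and then verify coordinatewise that the limit is a trajectory using closedness of $G$. The paper instead forms, for each $n$, the closed cylinder $G_n = \p{\p{\mahavier{n}{G}} \intersect \p{\set{x} \times \prod_{i=1}^n X}} \times \p{\prod_{i = n + 1}^\infinity X}$, observes these are non-empty, closed, and nested, and invokes the finite intersection property to conclude $\Intersection_{n \in \omega} G_n \isNonempty$; any point of this intersection is automatically a trajectory of $x$, so no limit verification is needed. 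The two arguments are the two standard faces of compactness: your subnet version is more hands-on but requires the extra closed-graph step at the end (and, as you rightly flag, you must use subnets rather than subsequences, since compact Hausdorff spaces need not be sequentially compact --- your hedging on this point is the only slightly muddled part of the writeup, but the subnet version you describe is the correct one). The paper's FIP version buys you a shorter proof that never mentions convergence at all, at the cost of having to check that the cylinders are closed, which it gets from Lemma~\ref{lem:mahavier-closed}.
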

\begin{proof}
We only provide the implication from $\p{1}$ to $\p{2}$; see Theorem $3.12$ in~\cite{transitivity_CR} for the rest. Suppose $x \in \illegal{G}$. To derive a contradiction, $G^n\of{x} \neq \emptySet$ for each positive integer $n$.  By Lemma~\ref{lem:mahavier-closed}, this implies $\p{\mahavier{n}{G}} \intersect \p{\set{x} \times \prod_{i=1}^n X}$ is a non-empty closed subset of $\prod_{i = 0}^n X$ for each $n \in \omega$. Hence, $G_n = \p{\p{\mahavier{n}{G}} \intersect \p{\set{x} \times \prod_{i=1}^n X}} \times \p{\prod_{i = n + 1}^\infinity X}$ is a non-empty closed subset of  $\prod_{i = 0}^\infinity X$ for each $n \in \omega$. Furthermore, $\setOf{G_n}:{n \in \omega}$ is a collection of non-empty closed subsets with the finite intersection property.  It follows $\collection{G} = \Intersection_{n \in \omega} G_n$ is non-empty by compactness.  However, $\collection{G} \subseteq T_{G}^+\of{x}$, which contradicts the fact $x$ is illegal.  
\end{proof}

\subsection{Uniform spaces}
We recall terminology for uniform spaces, as they are necessary for defining the shadowing property in non-metrizable spaces. See~\cite{engelking, james_uniformities, kelley_general_topology} for a more thorough introduction to uniform spaces, and~\cite{topological_chain_uniform, DAS2013149} for their application to the shadowing property.  A {\color{blue}\emph{uniformity}} on a topological space $X$ is a filter $\collection{U}$ of $X \times X$ which satisfies the following properties for each $U \in \collection{U}$:
\begin{enumerate}
    \item $U$ contains the diagonal $\Delta_X = \setOf{\p{x, x}}:{x \in X}$;
    \item $U^{-1} \in \collection{U}$;
    \item there exists $V \in \collection{U}$ such that $V^2 \subseteq U$. 
\end{enumerate}
We call $\p{X, \collection{U}}$ a {\color{blue}\emph{uniform space}}. We refer to the members of $\collection{U}$ as an {\color{blue}\emph{entourage}}. 

If $\p{X, \collection{U}}$ is a uniform space, the {\color{blue}\emph{uniform topology}} on $X$ is the topology for which $O \subseteq X$ is open if, and only if, for each $x \in O$ there exists $U \in \collection{U}$ such that $U\of{x} \subseteq O$. If there exists uniformity $\collection{U}$ on $X$ such that the uniform topology coincides with the original topology on $X$, we say $X$ is {\color{blue} \emph{uniformizable}}. It turns out that if $X$ is Hausdorff and uniformizable, $\Intersection \collection{U} = \Delta_X$, where $\collection{U}$ is the uniformity compatible with the original topology. 

It is well-known that a uniform space is uniformizable, if $X$ is completely regular. As our spaces are non-empty compact Hausdorff spaces, this is indeed the case. Moreover, Proposition $8.20$ in~\cite{james_uniformities} tells us that, for non-empty compact Hausdorff spaces, there is precisely one uniformity for which the uniform topology coincides with the original topology on $X$. Specifically, it is the nhood system of the diagonal in $X \times X$. Throughout this paper, this is the uniformity we implicitly use for non-empty compact Hausdorff spaces.

Let $\p{X, \collection{U}}$ be a uniform space.   A {\color{blue} \emph{fundamental system}} is a collection $\collection{B} \subseteq \collection{U}$, such that each entourage in $\collection{U}$ contains an element of $\collection{B}$. Every metric space $\p{X, d}$ can be made into a uniform space by taking $\collection{B} = \setOf{d^{-1}\of{[0, \epsilon]}}:{\epsilon > 0}$ as the fundamental system. 

Suppose $\p{X, \collection{U}}$ and $\p{Y, \collection{V}}$ are uniform spaces. A function $f : X \to Y$ is {\color{blue} \emph{uniformly continuous}} if, for each $V \in \collection{V}$, there exists $U \in \collection{U}$ such that $\p{x, y} \in U$ implies $\p{f\of{x}, f\of{y}} \in V$. A well-known generalisation of the Heine-Cantor theorem, is that every continuous function from a non-empty compact Hausdorff space to a uniform space is uniformly continuous (see~\cite[Theorem $6.31$]{kelley_general_topology}).

\section{The shadowing property}\label{section:shadowing}
We first introduce the shadowing property for topological dynamical systems, then generalise to CR-dynamical systems in the next section. To define the shadowing property, we require the notion of pseudo-orbits and shadowing points. We first consider $\p{X, d}$ to be a metric space. 

\begin{definition}
Let $\p{X, f}$ be a topological dynamical system and $\delta > 0$.  A sequence $\sequenceOf{x_n}:{n \in \omega}$ in $X$ is a {\color{blue} \emph{$\delta$-pseudo-orbit}}, provided $d\of{f\of{x_n}, x_{n+1}} \leq \delta$ for each $n \in \omega$. 
\end{definition}

\begin{definition}
Let $\p{X, f}$ be a topological dynamical system and $\epsilon > 0$. A point $y \in X$ {\color{blue} \emph{$\epsilon$-shadows}} a sequence $\sequenceOf{x_n}:{n \in \omega}$, provided $d\of{f^n\of{y}, x_n} < \epsilon$ for each $n \in \omega$. 
\end{definition}

\begin{definition}\label{def:metric-shadowing-topological}
A topological dynamical system $\p{X, f}$ has the {\color{blue} \emph{shadowing property}}, if for every $\epsilon > 0$ there exists a $\delta > 0$ such that every $\delta$-pseudo-orbit is  $\epsilon$-shadowed by a point in $X$. 
\end{definition}

We now define the shadowing property for uniform spaces $\p{X, \collection{U}}$. 

\begin{definition}
Let $\p{X, f}$ be a topological dynamical system and $V \in \collection{U}$. A sequence $\sequenceOf{x_n}:{n \in \omega}$ in $X$ is a {\color{blue} \emph{$V$-pseudo-orbit}}, provided $\p{f\of{x_n}, x_{n+1}} \in V$ for each $n \in \omega$. 
\end{definition}

\begin{definition}
Let $\p{X, f}$ be a topological dynamical system and $U \in \collection{U}$. A point $y \in X$ {\color{blue} \emph{$U$-shadows}} a sequence $\sequenceOf{x_n}:{n \in \omega}$, provided $\p{f^n\of{y}, x_n} \in U$ for each $n \in \omega$.
\end{definition}

\begin{definition}\label{def:uniform-shadowing-topological}
A topological dynamical system $\p{X, f}$ has the {\color{blue} \emph{shadowing property}}, if for every $U \in \collection{U}$ there exists an entourage $V \in \collection{U}$ such that every $V$-pseudo-orbit is  $U$-shadowed by a point in $X$. 
\end{definition}

Indeed, if $X$ is a non-empty compact metric space, Definition~\ref{def:metric-shadowing-topological} and Definition~\ref{def:uniform-shadowing-topological} are equivalent. 

In this paper, we generalise the following three well-known results to CR-dynamical systems. Note Lemma~\ref{lem:invariance-under-comp-topological} has been generalised in set-valued dynamical systems for metric spaces~\cite[Proposition $3.2$]{Luo2020}. 

\begin{lemma}\label{lem:invariance-under-comp-topological}
Let $\p{X, f}$ be a topological dynamical system. Then $\p{X, f}$ has the shadowing property if, and only if, $\p{X, f^n}$ has the shadowing property for all $n \geq 1$. 
\end{lemma}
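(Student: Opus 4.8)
The plan is to prove the two directions of the equivalence separately, exploiting the fact that an $f^n$-trajectory is a ``subsampling'' of an $f$-trajectory and that pseudo-orbits for $f^n$ can be interpolated into pseudo-orbits for $f$. Throughout I will work in the uniform setting of Definition~\ref{def:uniform-shadowing-topological}, using uniform continuity of $f$ (hence of each iterate $f^k$) guaranteed by the Heine--Cantor theorem for compact Hausdorff spaces.

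For the forward direction, suppose $\p{X, f}$ has shadowing; I want to show $\p{X, f^n}$ does too. Given an entourage $U$, I first choose a symmetric $W \in \collection{U}$ with $W^2 \subseteq U$. Then, using uniform continuity of $f, f^2, \ldots, f^{n-1}$, I pick an entourage $U'$ small enough that $\p{a,b} \in U'$ forces $\p{f^k\of{a}, f^k\of{b}} \in W$ for all $k \in [n-1]$. Apply shadowing of $\p{X, f}$ to $U'$ to obtain $\delta := V$. Now given a $V$-pseudo-orbit $\sequenceOf{y_m}:{m \in \omega}$ for $f^n$, I interpolate: define a new sequence $\sequenceOf{x_j}:{j \in \omega}$ by filling in the intermediate iterates, setting $x_{mn + k} = f^k\of{y_m}$ for $k \in [n-1]$. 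The key routine check is that this is a $V$-pseudo-orbit for $f$: the ``internal'' steps are exact because $f\of{x_{mn+k}} = x_{mn+k+1}$, and the single ``seam'' step from $f^n\of{y_m}$ to $y_{m+1}$ lands in $V$ precisely by the hypothesis on the original pseudo-orbit. Shadowing of $f$ yields a point $z$ that $U'$-shadows $\sequenceOf{x_j}$; I then argue $z$ $U$-shadows the original $\sequenceOf{y_m}$ by reading off the indices $j = mn$ and composing the $U'$-to-$W$ estimate with the $W^2 \subseteq U$ step.

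For the reverse direction, suppose $\p{X, f^n}$ has shadowing and I want shadowing for $\p{X, f}$. Given $U$, apply shadowing of $f^n$ to get an entourage $V_0$, and then shrink: using uniform continuity I choose $V \subseteq V_0$ small enough that a $V$-pseudo-orbit for $f$, when subsampled at multiples of $n$, is a $V_0$-pseudo-orbit for $f^n$. (This needs a telescoping estimate: the jump $\p{f^n\of{x_{mn}}, x_{(m+1)n}}$ accumulates $n$ single-step errors, each controlled through iterates of $f$, so I take $V$ inside an entourage that, after at most $n-1$ applications of $f$ and one $n$-fold composition of the triangle-style inequality, stays within $V_0$.) The shadowing point for the subsampled orbit then shadows the original orbit at the sampled indices, and I extend this to all indices by one more uniform-continuity argument bounding $\p{f^{mn+k}\of{z}, x_{mn+k}}$ in terms of $\p{f^{mn}\of{z}, x_{mn}}$ for $k \in [n-1]$.

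The main obstacle is bookkeeping the entourage choices so the nested estimates compose correctly, since without a metric I cannot simply add up error terms and must instead chain $V^2 \subseteq U$ relations carefully, keeping entourages symmetric where needed. The conceptual content is entirely in the interpolation and subsampling correspondence between $f$- and $f^n$-orbits; the uniform-space machinery only replaces ``$\epsilon/2$ twice'' arguments with compositions of entourages, so the hard part is purely the careful ordering of the $\forall U \, \exists V$ quantifier choices rather than any genuinely new idea.
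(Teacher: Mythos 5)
Your argument is correct, but note first that the paper does not actually prove this lemma: it is listed as one of three well-known classical results whose proofs are omitted, so the natural comparison is with the paper's proof of its CR-generalisation, Lemma~\ref{lem:invariance-under-comp}. Your forward direction is essentially that proof specialised to graphs of functions: interpolate a pseudo-orbit for $f^n$ into one for $f$ by inserting the exact intermediate iterates, observe that the internal steps land in the diagonal and the seam steps land in $V$, and read off the shadowing point at the indices $j = mn$. The entourages $W$ and $U'$ you introduce there are redundant --- since you only evaluate the shadowing estimate at the sampled indices, it suffices to demand $U' \subseteq U$, with no uniform continuity needed in that direction; this is exactly why the paper's generalised proof uses the same $U$ and $V$ with no composition of entourages. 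Where you genuinely diverge is the reverse direction. As stated, the lemma quantifies over \emph{all} $n \geq 1$, so necessity is immediate by taking $n = 1$, which is precisely how the paper dispatches the corresponding direction of Lemma~\ref{lem:invariance-under-comp} (``necessity is obvious''). You instead prove the stronger classical fact that shadowing of $\p{X, f^n}$ for a \emph{single} $n$ already yields shadowing of $\p{X, f}$, via subsampling plus a telescoped chain of entourages; that argument is sound and is the version one usually wants, but one step of your sketch is underspecified: controlling $\p{f^{mn+k}\of{z}, x_{mn+k}}$ for intermediate $k$ requires not only uniform continuity applied to $\p{f^{mn}\of{z}, x_{mn}}$, which controls $\p{f^{mn+k}\of{z}, f^k\of{x_{mn}}}$, but also a second application of the telescoping estimate to relate $f^k\of{x_{mn}}$ to $x_{mn+k}$, so your entourages must be chosen at the outset so that these two contributions compose into $U$. ``One more uniform-continuity argument'' undersells this, though it is a routine repair rather than a genuine gap.
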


\begin{theorem}\label{thm:id-shadowing-iff-totally-disconnected}
Let $\p{X, \emph{id}_X}$ be a topological dynamical system, where $\emph{id}_X$ is the identity map on $X$. Then, $\p{X, \emph{id}_X}$ has the shadowing property if, and only if, $X$ is totally disconnected. 
\end{theorem}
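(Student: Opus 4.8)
The plan is to prove both directions by exploiting the special structure of the identity map, where $\mathrm{id}_X^n = \mathrm{id}_X$ for all $n$, so that a point $y$ that $U$-shadows a sequence must satisfy $\p{y, x_n} \in U$ for every $n$. In particular, for the identity map a $V$-pseudo-orbit is precisely a sequence with $\p{x_n, x_{n+1}} \in V$ for each $n$ (consecutive points are $V$-close), and $U$-shadowing by $y$ means $\p{y, x_n} \in U$ for all $n$ (one point is $U$-close to the whole sequence). The key reformulation I would record first is therefore: $\p{X, \mathrm{id}_X}$ has the shadowing property if and only if, for every $U \in \collection{U}$, there is $V \in \collection{U}$ such that every sequence whose consecutive terms are $V$-related lies inside a single $U$-ball $U\of{y}$ for some $y$.

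For the direction ``shadowing $\Rightarrow$ totally disconnected,'' I would argue by contrapositive. Suppose $X$ is not totally disconnected, so some connected component $C$ contains at least two distinct points $a \neq b$. Since $X$ is Hausdorff and uniformizable with $\Intersection \collection{U} = \Delta_X$, I can choose an entourage $U$ small enough that $\p{a, b} \notin U \compose U$ (using property (3) to separate $a$ and $b$ by a ``doubled'' entourage). The goal is then to build, for any candidate $V$, a $V$-pseudo-orbit that cannot be $U$-shadowed. The idea is that within the connected set $C$ one can find a finite $V$-chain from $a$ to $b$: connectedness prevents $C$ from being split into the ``$V$-reachable from $a$'' part and its complement, so every point of $C$, in particular $b$, is joined to $a$ by a finite sequence of $V$-steps. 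Stringing this chain together (and then repeating the endpoint forever) yields a $V$-pseudo-orbit containing both $a$ and $b$; any single shadowing point $y$ would force $\p{y, a} \in U$ and $\p{y, b} \in U$, hence $\p{a, b} \in U \compose U$, a contradiction. This shows shadowing fails.

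For the converse, ``totally disconnected $\Rightarrow$ shadowing,'' I would use the standard fact that a compact Hausdorff totally disconnected space is zero-dimensional, hence has a basis of clopen sets. Given $U \in \collection{U}$, compactness lets me cover $X$ by finitely many clopen sets $C_1, \ldots, C_k$ each small enough that $C_i \times C_i \subseteq U$; refining to a disjoint clopen partition $\set{P_1, \ldots, P_m}$, I then let $V$ be an entourage contained in $\Union_i \p{P_i \times P_i}$, which is itself an entourage because the $P_i$ are clopen and the partition is finite (so this set is a clopen neighbourhood of the diagonal). Any $V$-pseudo-orbit then has all consecutive pairs inside a common block, and since the blocks are disjoint the entire sequence stays within a single $P_i$; choosing any $y \in P_i$ gives $\p{y, x_n} \in P_i \times P_i \subseteq U$ for all $n$, so $y$ $U$-shadows the pseudo-orbit.

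The main obstacle I expect is the forward direction, specifically producing the finite $V$-chain inside the connected component and making the entourage arithmetic rigorous in a general (non-metric) uniform space. The delicate point is that connectedness gives chainability only relative to a fixed symmetric $V$: I would first replace $V$ by $V \intersect V^{-1}$ to assume symmetry, then define an equivalence-like relation on $C$ by ``finitely $V$-chain-connected,'' observe its classes are open in $C$ (each point has a $V$-neighbourhood staying in its class), and conclude by connectedness that there is a single class, so $a$ and $b$ are $V$-chained. Handling the ``$\p{a,b} \notin U \compose U$'' choice cleanly — invoking property (3) of the uniformity to pick $U$ with $U \compose U$ avoiding the pair — is the other spot requiring care, but both steps are standard uniform-space manipulations once set up correctly.
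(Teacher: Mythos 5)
Your proposal is correct and follows essentially the same strategy the paper uses for this equivalence in the metric setting (inside the proof of Theorem~\ref{thm:isometry-mahavier}: a finite chain joining two points of a nontrivial component defeats shadowing, and a finite pairwise-disjoint clopen cover establishes it); your uniform-space formulation is the appropriate level of generality for the paper's compact Hausdorff setting, where the theorem itself is only cited as well-known. Both directions check out, including the chain-connectedness argument on the component and the observation that $\Union_i \p{P_i \times P_i}$ is an entourage because it is an open neighbourhood of the diagonal.
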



\begin{theorem}\label{thm:periodic-shadowing-iff-totally-disconnected}
Suppose $\p{X, f}$ is a topological dynamical system with $f^n = \emph{id}_X$ for some $n \geq 1$.  Then, $\p{X, f}$ has the shadowing property if, and only if, $X$ is totally disconnected.   
\end{theorem}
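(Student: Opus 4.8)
The plan is to deduce Theorem~\ref{thm:periodic-shadowing-iff-totally-disconnected} from Theorem~\ref{thm:id-shadowing-iff-totally-disconnected} using Lemma~\ref{lem:invariance-under-comp-topological}. The key observation is that the hypothesis $f^n = \mathrm{id}_X$ lets us relate the shadowing property of $\p{X, f}$ to that of $\p{X, \mathrm{id}_X}$ through the intermediate system $\p{X, f^n}$, for which the characterisation is already known.

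For the forward direction, suppose $\p{X, f}$ has the shadowing property. By Lemma~\ref{lem:invariance-under-comp-topological}, $\p{X, f^n}$ has the shadowing property for every $n \geq 1$; in particular for the specific $n$ with $f^n = \mathrm{id}_X$. But then $\p{X, \mathrm{id}_X}$ has the shadowing property, so by Theorem~\ref{thm:id-shadowing-iff-totally-disconnected}, $X$ is totally disconnected. Conversely, suppose $X$ is totally disconnected. By Theorem~\ref{thm:id-shadowing-iff-totally-disconnected}, $\p{X, \mathrm{id}_X}$ has the shadowing property, i.e.\ $\p{X, f^n}$ has the shadowing property. Applying Lemma~\ref{lem:invariance-under-comp-topological} in the reverse direction, we conclude $\p{X, f}$ has the shadowing property. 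This completes both implications.

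The argument is essentially a two-step substitution, so I would expect it to be short. The only genuine point to verify is that Lemma~\ref{lem:invariance-under-comp-topological} is stated as a full equivalence (``if, and only if''), which it is, so both the forward use (shadowing of $f$ gives shadowing of $f^n$) and the backward use (shadowing of $f^n$ gives shadowing of $f$) are immediately available. Thus no new quantifier juggling over entourages is needed; the entire content is packaged inside the two cited results.

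The main obstacle, if any, is simply ensuring that one invokes Lemma~\ref{lem:invariance-under-comp-topological} with exactly the value of $n$ supplied by the hypothesis and correctly identifies $\p{X, f^n}$ with $\p{X, \mathrm{id}_X}$ since $f^n = \mathrm{id}_X$ makes these literally the same dynamical system. There is no delicate estimate or compactness argument to carry out here beyond what is already absorbed into the earlier theorems; the proof is purely a composition of established equivalences.
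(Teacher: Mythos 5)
Your forward direction is sound: shadowing of $\p{X, f}$ gives shadowing of $\p{X, f^n} = \p{X, \text{id}_X}$ by the forward implication of Lemma~\ref{lem:invariance-under-comp-topological}, and Theorem~\ref{thm:id-shadowing-iff-totally-disconnected} then yields total disconnectedness. The problem is the converse. Your step ``$\p{X, f^n}$ has the shadowing property, hence by the reverse direction of Lemma~\ref{lem:invariance-under-comp-topological} so does $\p{X, f}$'' uses the implication ``shadowing of $f^k$ for a \emph{single fixed} $k$ implies shadowing of $f$.'' But the lemma, as this paper states and uses it, reads ``$\p{X,f}$ has shadowing iff $\p{X, f^n}$ has shadowing \emph{for all} $n \geq 1$,'' and its reverse direction is the trivial one: the hypothesis ``for all $n$'' already contains the case $n = 1$. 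This is made explicit in the paper's own generalisation, Lemma~\ref{lem:invariance-under-comp}, whose proof opens with ``Necessity is obvious'' precisely because the converse there is just the $k=1$ instance. You only know shadowing of $f^n$ for the one $n$ with $f^n = \text{id}_X$, so the lemma as stated gives you nothing in this direction, and your claim that ``the entire content is packaged inside the two cited results'' is where the proof breaks.

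The single-power converse is in fact a true classical statement for continuous self-maps of compact Hausdorff spaces, but proving it is essentially as hard as proving Theorem~\ref{thm:periodic-shadowing-iff-totally-disconnected} directly: one must propagate the pseudo-orbit errors through $f, f^2, \ldots, f^{n-1}$ using uniform continuity of the iterates, and then convert an $f^n$-shadowing point back into an $f$-shadowing point. The paper itself signals this: it states the theorem without proof as a known result and later remarks that ``the proof of Theorem~\ref{thm:periodic-shadowing-iff-totally-disconnected} uses uniform continuity (see~\cite[Theorem $5$]{meddaugh_shadowing_recurrence}),'' and the paper's generalisation to CR-dynamical systems likewise has to introduce uniform upper semi-continuity and a clopen refinement argument for exactly this direction. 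To repair your proof you must either prove the single-power converse of Lemma~\ref{lem:invariance-under-comp-topological} (a genuine uniform-continuity argument), or argue directly: take a finite pairwise disjoint clopen cover refining $\bigcap_{i=0}^{n-1} f^{-i}$ of a small-diameter cover and show that for small enough $\delta$ a $\delta$-pseudo-orbit stays in the same clopen piece as the true orbit of its initial point. Either way, the real analytic content of the theorem cannot be absorbed into the two cited results.
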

\section{$\p{i, j}$-shadowing properties}\label{section:ij-shadowing}
\subsection{Basic definitions}

We firstly go over basic definitions, generalising the shadowing property to CR-dynamical systems (for both metric and uniform spaces). Further, we show these properties are robust in the sense they are invariant under topological conjugacy (see~\cite[Definition $9.3$]{minimality_CR}), and that the definitions for metric and uniform spaces coincide when our non-empty compact Hausdorff spaces are metrizable. We first consider $\p{X, d}$ a metric space. 

\begin{definition}
Let $\p{X, G}$ be a CR-dynamical system and $\delta > 0$. We say a sequence $\sequenceOf{x_n}:{n \in \omega}$ in $\nondegenerate{G}$ is a {\color{blue} \emph{$\p{\delta, i}$-pseudo-orbit}}, provided $d\of{x_{n+1}, y} \leq \delta$ for
\begin{itemize}
    \item[{\color{red} $\p{i = 1}$}] \textbf{all}  $y \in G\of{x_n}$,
    \item[{\color{red} $\p{i = 2}$}] \textbf{some}  $y \in G\of{x_n}$,
\end{itemize}
for all $n \in \omega$. 
\end{definition}

\begin{observation}
Let $\p{X, G}$ be a CR-dynamical system and $\delta > 0$. Every $\p{\delta, 1}$-pseudo-orbit is a $\p{\delta, 2}$-pseudo-orbit. 
\end{observation}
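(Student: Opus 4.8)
The plan is to unwind the two definitions and observe that the only content of the statement lies in the non-emptiness of $G\of{x_n}$. Recall that a $\p{\delta, 1}$-pseudo-orbit is a sequence $\sequenceOf{x_n}:{n \in \omega}$ in $\nondegenerate{G}$ satisfying $d\of{x_{n+1}, y} \leq \delta$ for \textbf{all} $y \in G\of{x_n}$ and all $n \in \omega$, whereas a $\p{\delta, 2}$-pseudo-orbit requires the same inequality only for \textbf{some} $y \in G\of{x_n}$. Thus passing from the first notion to the second is a matter of weakening a universal quantifier to an existential one, and the entire argument will turn on checking that the domain of quantification is non-empty.

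First I would fix a $\p{\delta, 1}$-pseudo-orbit $\sequenceOf{x_n}:{n \in \omega}$ and an arbitrary $n \in \omega$. Since the sequence lies in $\nondegenerate{G}$, each $x_n$ is non-degenerate, so by definition there exists $y \in X$ with $\p{x_n, y} \in G$; that is, $G\of{x_n} \isNonempty$. This is the one place where the hypothesis that the sequence lies in $\nondegenerate{G}$ is used, and it is the crux of the matter: a universally quantified statement implies its existential counterpart precisely when the set over which one quantifies is non-empty.

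Having secured some $y \in G\of{x_n}$, I would invoke the $\p{\delta, 1}$ condition for this particular $y$ to obtain $d\of{x_{n+1}, y} \leq \delta$. Since $n$ was arbitrary, this furnishes, for every $n \in \omega$, some $y \in G\of{x_n}$ with $d\of{x_{n+1}, y} \leq \delta$, which is exactly the $\p{\delta, 2}$ condition; hence the sequence is a $\p{\delta, 2}$-pseudo-orbit.

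I do not anticipate any genuine obstacle here, and the statement is essentially immediate. The only point requiring care is not to read the implication as a purely formal tautology, since it would fail for a degenerate point $x_n$, where the universal clause holds vacuously yet no witnessing $y$ exists. The requirement that the sequence lie in $\nondegenerate{G}$ is exactly what excludes this pathology.
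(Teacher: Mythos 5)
Your proof is correct and is exactly the argument the paper leaves implicit (the observation is stated without proof): weakening the universal quantifier to an existential one, with the non-degeneracy of each $x_n$ guaranteeing that $G\of{x_n}$ is non-empty so the implication is not vacuous. Your remark about why the hypothesis that the sequence lies in $\nondegenerate{G}$ is essential is the right point to flag.
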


\begin{definition}
Let $\p{X, G}$ be a CR-dynamical system and $\epsilon > 0$. We say $y \in \legal{G}$ {\color{blue} \emph{$\p{\epsilon, j}$-shadows}} a sequence $\sequenceOf{x_n}:{n \in \omega}$, provided for
\begin{itemize}
    \item[{\color{red} $\p{j = 1}$}] \textbf{all}  $\sequenceOf{y_n}:{n \in \omega} \in T_G^+\of{y}$,
    \item[{\color{red} $\p{j = 2}$}] \textbf{some}  $\sequenceOf{y_n}:{n \in \omega} \in T_G^+\of{y}$,
\end{itemize}
we have $d\of{x_n, y_n} < \epsilon$ for each $n \in \omega$.
\end{definition}

\begin{observation}
Let $\p{X, G}$ be a CR-dynamical system and $\epsilon > 0$. Every $\p{\epsilon, 1}$-shadowing-point is an $\p{\epsilon, 2}$-shadowing-point. 
\end{observation}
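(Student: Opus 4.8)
The plan is to exploit the fact that the very definition of an $\p{\epsilon, j}$-shadowing-point requires the point $y$ to lie in $\legal{G}$, and that legality of $y$ means precisely that $T_G^+\of{y}$ is non-empty. The argument is then a single instance of the elementary principle that, over a non-empty index set, a universally quantified condition implies its existentially quantified counterpart; so the entire proof reduces to selecting one trajectory and specialising the hypothesis to it.

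Concretely, I would argue as follows. Suppose $y \in X$ is an $\p{\epsilon, 1}$-shadowing-point of a sequence $\sequenceOf{x_n}:{n \in \omega}$. By definition of a shadowing-point we have $y \in \legal{G}$, so $T_G^+\of{y} \isNonempty$; fix any trajectory $\sequenceOf{y_n}:{n \in \omega} \in T_G^+\of{y}$. Because $y$ is an $\p{\epsilon, 1}$-shadowing-point, the inequality $d\of{x_n, y_n} < \epsilon$ holds for each $n \in \omega$ and for \textbf{all} trajectories in $T_G^+\of{y}$; in particular it holds for the chosen one. Hence there exists a trajectory $\sequenceOf{y_n}:{n \in \omega} \in T_G^+\of{y}$ with $d\of{x_n, y_n} < \epsilon$ for every $n \in \omega$, which is exactly the condition that $y$ be an $\p{\epsilon, 2}$-shadowing-point.

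There is essentially no obstacle here, and the statement is immediate once the definitions are unwound. The single point one must not overlook is that the implication genuinely relies on $T_G^+\of{y}$ being non-empty — an empty universal statement would not yield an existential one — and this is exactly why the definition of a shadowing-point insists on $y \in \legal{G}$. The same remark explains why the parallel containment for pseudo-orbits (that every $\p{\delta, 1}$-pseudo-orbit is a $\p{\delta, 2}$-pseudo-orbit) is proved identically, using instead that $x_n \in \nondegenerate{G}$ forces $G\of{x_n} \isNonempty$.
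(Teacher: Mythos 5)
Your proof is correct and matches the (implicit) reasoning of the paper, which states this as an observation without proof: one simply unwinds the definitions and uses that $y \in \legal{G}$ guarantees $T_G^+\of{y} \isNonempty$, so the universal statement specialises to an existential one. Your remark pinpointing the non-emptiness of $T_G^+\of{y}$ as the one non-vacuous ingredient is exactly the right thing to flag.
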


\begin{definition}\label{def:metric-ij-shadowing}
A CR-dynamical system $\p{X, G}$ has the {\color{blue} \emph{$\p{i, j}$-shadowing property}}, if for every $\epsilon > 0$ there exists a $\delta > 0$ such that every $\p{\delta, i}$-pseudo-orbit is $\p{\epsilon, j}$-shadowed by a point in $X$. 
\end{definition}

\begin{note}
The shadowing property examined in set-valued dynamical systems~\cite{specification, yin_shadowing_property_set_valued_map, yu_rieger, yin2024finitetypefundamentalobjects, Luo2020} corresponds to the $\p{2, 2}$-shadowing property. As such, we make the following observation. 
\end{note}

\begin{observation}
Let $\p{X, f}$ be a set-valued dynamical system and $F = \Graph{f}$. Then, $\p{X, f}$ has the shadowing property if, and only if, $\p{X, F}$ has the $\p{2, 2}$-shadowing property. 
\end{observation}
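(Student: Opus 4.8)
The plan is to show that, after unwinding the definitions, the two properties are literally the same condition, once one checks that the qualifiers ``lies in $\nondegenerate{F}$'' and ``$y \in \legal{F}$'' occurring in the CR-definitions are vacuous in this setting. Throughout I write $F = \Graph{f}$, so that $\p{X, F}$ is the associated CR-dynamical system and $F\of{x} = f\of{x}$ for every $x \in X$.

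First I would establish that $\nondegenerate{F} = X$ and $\legal{F} = X$. Since $f$ is a set-valued function, $f\of{x} \in 2^X$ is a \emph{non-empty} closed subset of $X$ for every $x$; hence $F\of{x} = f\of{x} \isNonempty$, so every point of $X$ is non-degenerate, giving $\nondegenerate{F} = X$. Iterating, $F^n\of{x} \isNonempty$ for every positive integer $n$ and every $x \in X$, so by the contrapositive of Theorem~\ref{thm:illegal-characterisation} no point is illegal, and thus $\legal{F} = X$. This is the only place the hypothesis that $f$ is a genuine (total, non-empty-valued) set-valued function, rather than an arbitrary closed relation, is used.

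Next I would match the pseudo-orbits and the shadowing points. By definition, a sequence $\sequenceOf{x_n}:{n \in \omega}$ is a $\delta$-pseudo-orbit of the set-valued system $\p{X, f}$ precisely when, for each $n$, the distance from $x_{n+1}$ to the set $f\of{x_n}$ is at most $\delta$; since $f\of{x_n}$ is a closed subset of the compact space $X$, this continuous distance is attained, so the condition is equivalent to the existence of $y \in f\of{x_n} = F\of{x_n}$ with $d\of{x_{n+1}, y} \leq \delta$. As $\nondegenerate{F} = X$, the requirement that the sequence lie in $\nondegenerate{F}$ is automatic, and this is exactly the defining condition of a $\p{\delta, 2}$-pseudo-orbit. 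Likewise, a trajectory of $y$ under $f$ is a sequence $\sequenceOf{y_n}:{n \in \omega}$ with $y_0 = y$ and $y_{n+1} \in f\of{y_n}$, which says precisely that $\p{y_n, y_{n+1}} \in F$ for all $n$; that is, the trajectories of $y$ under $f$ are exactly the elements of $T_F^+\of{y}$. Hence $y$ $\epsilon$-shadows $\sequenceOf{x_n}:{n \in \omega}$ in the set-valued sense — some trajectory $\sequenceOf{y_n}:{n \in \omega}$ of $y$ satisfies $d\of{x_n, y_n} < \epsilon$ for all $n$ — if and only if $y$ $\p{\epsilon, 2}$-shadows the sequence, the condition $y \in \legal{F}$ again being automatic.

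Combining these identifications, for each fixed $\epsilon > 0$ a given $\delta > 0$ witnesses the set-valued shadowing property exactly when it witnesses the $\p{2, 2}$-shadowing property; quantifying over $\epsilon$ then yields the stated equivalence. I do not expect any real obstacle beyond this bookkeeping: the substantive content is entirely the vacuity of the $\nondegenerate{F}$ and $\legal{F}$ qualifiers, established in the second step.
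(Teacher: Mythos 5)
Your proposal is correct and matches the paper, which states this as an Observation with no proof at all, treating it as immediate from the definitions; your careful unwinding (in particular the check that $\nondegenerate{F} = X$ and $\legal{F} = X$, so the extra qualifiers in the CR-definitions are vacuous) is exactly the bookkeeping the paper leaves implicit.
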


We now consider $\p{X, \collection{U}}$ a uniform space. 

\begin{definition}
Let $\p{X, G}$ be a CR-dynamical system and $V \in \collection{U}$. We say a sequence $\sequenceOf{x_n}:{n \in \omega}$ in $\nondegenerate{G}$ is a {\color{blue} \emph{$\p{V, i}$-pseudo-orbit}}, provided $\p{y, x_{n+1}} \in V$ for
\begin{itemize}
    \item[{\color{red} $\p{i = 1}$}] \textbf{all}  $y \in G\of{x_n}$,
    \item[{\color{red} $\p{i = 2}$}] \textbf{some}  $y \in G\of{x_n}$,
\end{itemize}
for all $n \in \omega$. 
\end{definition}

\begin{definition}
Let $\p{X, G}$ be a CR-dynamical system and $U \in \collection{U}$. We say $y \in \legal{G}$ {\color{blue} \emph{$\p{U, j}$-shadows}} a sequence $\sequenceOf{x_n}:{n \in \omega}$, provided for
\begin{itemize}
    \item[{\color{red} $\p{j = 1}$}] \textbf{all}  $\sequenceOf{y_n}:{n \in \omega} \in T_G^+\of{y}$,
    \item[{\color{red} $\p{j = 2}$}] \textbf{some}  $\sequenceOf{y_n}:{n \in \omega} \in T_G^+\of{y}$,
\end{itemize}
we have $\p{y_n, x_n} \in U$ for each $n \in \omega$.
\end{definition}

\begin{definition}\label{def:uniform-ij-shadowing}
A CR-dynamical system $\p{X, G}$ has the {\color{blue} \emph{$\p{i, j}$-shadowing property}}, if for every $U \in \collection{U}$ there exists $V \in \collection{U}$ such that every $\p{V, i}$-pseudo-orbit is $\p{U, j}$-shadowed by a point in $X$. 
\end{definition}

\begin{proposition}
The $\p{i, j}$-shadowing properties in Definition~\ref{def:metric-ij-shadowing} and Definition~\ref{def:uniform-ij-shadowing} coincide for non-empty compact metric spaces. 
\end{proposition}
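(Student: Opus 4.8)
The plan is to observe that, for a non-empty compact metric space $\p{X, d}$, the uniformity $\collection{U}$ that the paper implicitly fixes must coincide with the metric uniformity. Indeed, by Proposition $8.20$ of~\cite{james_uniformities} there is exactly one uniformity inducing the original (metric) topology; since the metric uniformity, generated by the fundamental system $\collection{B} = \setOf{d^{-1}\of{[0, \epsilon]}}:{\epsilon > 0}$, induces precisely that topology, the two agree. This is the bridge between the two definitions: membership in the canonical entourage $W_\epsilon := d^{-1}\of{[0, \epsilon]} = \setOf{\p{a, b} \in X \times X}:{d\of{a, b} \leq \epsilon}$ is literally the inequality $d\of{a, b} \leq \epsilon$, and the open variant $W_\epsilon^{\circ} := \setOf{\p{a, b}}:{d\of{a, b} < \epsilon}$ is also an entourage satisfying $W_\epsilon^{\circ} \subseteq W_\epsilon$.

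First I would prove that metric $\p{i, j}$-shadowing implies uniform $\p{i, j}$-shadowing. Given $U \in \collection{U}$, the fundamental-system property yields $\epsilon > 0$ with $W_\epsilon \subseteq U$. Apply metric shadowing to this $\epsilon$ to obtain $\delta > 0$, and set $V = W_\delta$. Since $d$ is symmetric, a $\p{V, i}$-pseudo-orbit is exactly a $\p{\delta, i}$-pseudo-orbit, so it is $\p{\epsilon, j}$-shadowed by some $y$; the strict inequalities $d\of{x_n, y_n} < \epsilon$ then give $\p{y_n, x_n} \in W_\epsilon^{\circ} \subseteq W_\epsilon \subseteq U$, i.e.\ $y$ $\p{U, j}$-shadows the orbit. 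One must check that this translation respects the universal/existential quantifier over $T_G^+\of{y}$ recorded by $j$, and over $G\of{x_n}$ recorded by $i$, but this is immediate because those quantifiers are identical in both definitions.

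Conversely, for uniform $\Rightarrow$ metric, given $\epsilon > 0$ I would take the open entourage $U = W_\epsilon^{\circ}$, whose membership condition is exactly $d\of{x_n, y_n} < \epsilon$. Uniform shadowing supplies $V \in \collection{U}$, and the fundamental-system property gives $\delta > 0$ with $W_\delta \subseteq V$. Then any $\p{\delta, i}$-pseudo-orbit is a $\p{V, i}$-pseudo-orbit, hence $\p{U, j}$-shadowed, and unwinding $\p{y_n, x_n} \in W_\epsilon^{\circ}$ recovers the metric $\p{\epsilon, j}$-shadowing condition.

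The only genuine subtlety --- the main obstacle --- is the mismatch between the non-strict inequality $\leq \delta$ in the definition of a pseudo-orbit and the strict inequality $< \epsilon$ in the definition of a shadowing point. This is precisely why I keep both the closed entourage $W_\epsilon$ and its open refinement $W_\epsilon^{\circ}$ in play: the pseudo-orbit side is matched to the closed entourages (which are the canonical fundamental-system elements), while the shadowing side is matched to the open ones, and the inclusion $W_\epsilon^{\circ} \subseteq W_\epsilon$ lets the nesting go through in both directions. Everything else is a direct unravelling of definitions.
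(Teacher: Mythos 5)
Your proposal is correct and follows essentially the same route as the paper: both directions translate between $\epsilon$/$\delta$ and entourages via the fundamental system $\setOf{d^{-1}\of{[0,\epsilon]}}:{\epsilon > 0}$, with the only cosmetic difference being that you handle the strict-versus-non-strict inequality in the converse direction by taking the open entourage $\setOf{\p{a,b}}:{d\of{a,b} < \epsilon}$ where the paper instead uses the closed entourage $d^{-1}\of{[0, \Fraction \epsilon / 2]}$; both choices are valid. Your preliminary remark that the unique compatible uniformity on a compact metric space is the metric uniformity is a worthwhile explicit justification of something the paper leaves implicit.
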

\begin{proof}
Suppose $\p{X, G}$ is a CR-dynamical system, where $\p{X, d}$ is a compact metric space and $\collection{U}$ is a uniformity on $X$. Suppose $\p{X, G}$ has the $\p{i, j}$-shadowing property as in Definition~\ref{def:metric-ij-shadowing}. Let $U \in \collection{U}$. There exists $\epsilon > 0$, such that $d^{-1}\of{[0, \epsilon]} \subseteq U$. It follows that there exists $\delta > 0$ satisfying Definition~\ref{def:metric-ij-shadowing}. Let $V = d^{-1}\of{[0, \delta]}$. Suppose $\sequenceOf{x_n}:{n \in \omega}$ is a $\p{V, i}$-pseudo-orbit. Then, $\sequenceOf{x_n}:{n \in \omega}$ is a $\p{\delta, i}$-pseudo-orbit, which means that there exists $y \in \legal{G}$ that $\p{\epsilon, j}$-shadows $\sequenceOf{x_n}:{n \in \omega}$. Indeed, by choice of $\epsilon$, $y$ $\p{U, j}$-shadows $\sequenceOf{x_n}:{n \in \omega}$. Thus, $\p{X, G}$ has the $\p{i, j}$-shadowing property as in Defintion~\ref{def:uniform-ij-shadowing}. 

Now, suppose $\p{X, G}$ has the $\p{i, j}$-shadowing property as in Definition~\ref{def:uniform-ij-shadowing}. Let $\epsilon > 0$, and $U = d^{-1}\of{[0, \Fraction \epsilon / 2]}$. There exists entourage $V \in \collection{U}$ satisfying Definition~\ref{def:uniform-ij-shadowing}. There exists $\delta > 0$, such that $d^{-1}\of{[0, \delta]} \subseteq V$. Suppose $\sequenceOf{x_n}:{n \in \omega}$ is a $\p{\delta, i}$-pseudo-orbit. Then, $\sequenceOf{x_n}:{n \in \omega}$ is a $\p{V, i}$-pseudo-orbit, which means that there exists $y \in \legal{G}$ that $\p{U, j}$-shadows $\sequenceOf{x_n}:{n \in \omega}$. Indeed, by choice of $U$, $y$ $\p{\epsilon, j}$-shadows $\sequenceOf{x_n}:{n \in \omega}$. Thus, $\p{X, G}$ has the $\p{i, j}$-shadowing property as in Definition~\ref{def:metric-ij-shadowing}. 
\end{proof}
\begin{note}
Due to this result, for metric spaces we use these definitions interchangeably. Otherwise, we implicitly use the uniformity definitions. 
\end{note}

\begin{proposition}
The $\p{i, j}$-shadowing properties are invariant under topological conjugacy. 
\end{proposition}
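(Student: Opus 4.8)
The plan is to prove only one implication: if $\p{X, G}$ has the $\p{i,j}$-shadowing property and $\varphi : X \to Y$ is a topological conjugacy onto $\p{Y, H}$, then $\p{Y, H}$ has the $\p{i,j}$-shadowing property. The converse is free, since $\varphi^{-1}$ is then a topological conjugacy from $\p{Y, H}$ to $\p{X, G}$. Throughout I would work with the uniform formulation (Definition~\ref{def:uniform-ij-shadowing}), writing $\collection{U}$ and $\collection{V}$ for the unique uniformities on the compact Hausdorff spaces $X$ and $Y$.

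The first and only substantive step is to promote the conjugacy to a \emph{uniform} isomorphism. Since $\varphi$ is a homeomorphism of compact Hausdorff spaces, $\varphi \times \varphi$ is a homeomorphism of $X \times X$ onto $Y \times Y$ carrying $\Delta_X$ onto $\Delta_Y$. Because the uniformity on a compact Hausdorff space is precisely the neighbourhood filter of the diagonal, $\varphi \times \varphi$ sends neighbourhoods of $\Delta_X$ to neighbourhoods of $\Delta_Y$ and conversely, so $U \mapsto \p{\varphi \times \varphi}\of{U}$ is a bijection $\collection{U} \to \collection{V}$ with inverse induced by $\varphi^{-1}$. (Alternatively, one may derive the same fact from the uniform continuity of $\varphi$ and $\varphi^{-1}$, guaranteed by the Heine--Cantor theorem quoted in Section~\ref{section:uniform-spaces}, together with upward closure of the filters.)

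Next I would record the dictionary induced by the conjugacy on the combinatorial data of the two systems. Using $\p{x, z} \in G \iff \p{\varphi\of{x}, \varphi\of{z}} \in H$ and surjectivity of $\varphi$, I would verify: (a) $\varphi\of{\nondegenerate{G}} = \nondegenerate{H}$ and $H\of{\varphi\of{x}} = \varphi\of{G\of{x}}$; and (b) the map $\seq{x_n} \mapsto \seq{\varphi\of{x_n}}$ is a bijection of $T_G^+\of{x}$ onto $T_H^+\of{\varphi\of{x}}$, whence $\varphi\of{\legal{G}} = \legal{H}$. From (a), a sequence $\seq{x_n}$ is a $\p{V, i}$-pseudo-orbit in $\p{X, G}$ if, and only if, $\seq{\varphi\of{x_n}}$ is a $\p{\p{\varphi \times \varphi}\of{V}, i}$-pseudo-orbit in $\p{Y, H}$, the quantifier over $G\of{x_n}$ transporting to the quantifier over $H\of{\varphi\of{x_n}}$ for both $i = 1$ and $i = 2$. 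From (b), $y$ $\p{U, j}$-shadows $\seq{x_n}$ if, and only if, $\varphi\of{y}$ $\p{\p{\varphi \times \varphi}\of{U}, j}$-shadows $\seq{\varphi\of{x_n}}$, for both $j = 1$ and $j = 2$, using $\p{y_n, x_n} \in U \iff \p{\varphi\of{y_n}, \varphi\of{x_n}} \in \p{\varphi \times \varphi}\of{U}$.

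Finally I would assemble these pieces. Given $U' \in \collection{V}$, put $U = \p{\varphi^{-1} \times \varphi^{-1}}\of{U'} \in \collection{U}$, extract from the hypothesis a $V \in \collection{U}$ witnessing $\p{i,j}$-shadowing of $\p{X, G}$ against $U$, and set $V' = \p{\varphi \times \varphi}\of{V} \in \collection{V}$. For any $\p{V', i}$-pseudo-orbit $\seq{y_n}$ in $\p{Y, H}$, the sequence $\seq{\varphi^{-1}\of{y_n}}$ is a $\p{V, i}$-pseudo-orbit in $\p{X, G}$ by the dictionary, hence is $\p{U, j}$-shadowed by some $x \in \legal{G}$; then $\varphi\of{x} \in \legal{H}$ $\p{U', j}$-shadows $\seq{y_n}$, as required. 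The main obstacle is entirely in the first step, namely that the conjugacy is automatically a uniform isomorphism of the two unique uniformities; after that the argument is quantifier-transport bookkeeping, whose only delicate point is that the ``all/some'' clauses in the $i$- and $j$-conditions transport faithfully, which relies on the \emph{equalities} $H\of{\varphi\of{x}} = \varphi\of{G\of{x}}$ and $T_H^+\of{\varphi\of{x}} = \varphi \circ T_G^+\of{x}$ (not mere inclusions) furnished by the surjectivity and bijectivity in (a) and (b).
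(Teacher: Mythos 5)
Your proposal is correct and follows essentially the same route as the paper: transport pseudo-orbits backward and shadowing points forward through the conjugacy, using the fact that $\varphi$ and $\varphi^{-1}$ are automatically uniformly continuous (equivalently, that $\varphi \times \varphi$ identifies the two diagonal-neighbourhood uniformities) together with $H\of{\varphi\of{x}} = \varphi\of{G\of{x}}$. Your write-up is somewhat more explicit than the paper's about why the ``all/some'' quantifiers in the $i$- and $j$-clauses transport faithfully and why $\varphi\of{\legal{G}} = \legal{H}$, but the underlying argument is the same.
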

\begin{proof}
Suppose $\p{X, G}$ and $\p{Y, H}$ are CR-dynamical systems which are topologically conjugate to one another. Then, there is a homeomorphism $\varphi : X \to Y$ such that $\p{x, y} \in G$ if, and only if, $\p{\varphi\of{x}, \varphi\of{y}} \in H$. Suppose $\p{X, G}$ has the $\p{i, j}$-shadowing property. We show $\p{Y, H}$ has the $\p{i, j}$-shadowing property. To this end, suppose $U_Y$ is an entourage of $Y$. Since $X$ is a non-empty compact Hausdorff space and $Y$ is a uniform space, $\varphi$ is uniformly continuous. So, there exists entourage $U_X$ of $X$ such that if $\p{x, y} \in U_X$, then $\p{\varphi\of{x}, \varphi\of{y}} \in U_Y$. Now, since $\p{X, G}$ has the $\p{i, j}$-shadowing property, there exists entourage $V_X$ of $X$ such that every $\p{V_X, i}$-pseudo-orbit is $\p{U_X, j}$-shadowed by a point in $X$. Since $Y$ is a non-empty compact Hausdorff space and $X$ is a uniform space, $\varphi^{-1}$ is uniformly continuous. So, there exists entourage $V_Y$ of $Y$ such that if $\p{x, y} \in V_Y$, then $\p{\varphi^{-1}\of{x}, \varphi^{-1}\of{y}} \in V_X$. 

Now, suppose $\sequenceOf{\varphi\of{x_n}}:{n \in \omega}$ is a $\p{V_Y, i}$-pseudo-orbit in $Y$. Since $H\of{\varphi\of{x_n}} = \varphi\of{G\of{x_n}}$, it follows $\sequenceOf{x_n}:{n \in \omega}$ is a $\p{V_X, i}$-pseudo-orbit in $X$. Hence, there exists $y \in \legal{G}$ which $\p{U_X, j}$-shadows $\sequenceOf{x_n}:{n \in \omega}$. Clearly, $\varphi\of{y} \in \legal{H}$, and $\p{U_Y, j}$-shadows $\sequenceOf{\varphi\of{x_n}}:{n \in \omega}$. Thus, $\p{Y, H}$ has the $\p{i, j}$-shadowing property.  
\end{proof}

\subsection{Distinctness of the $\p{i, j}$-shadowing properties}

\begin{proposition}\label{prop:implication}
The $\p{i, j}$-shadowing properties have the following implication diagram. 
\[
 \begin{tikzpicture}[
       decoration = {markings,
                     mark=at position .5 with {\arrow{Stealth[length=2mm]}}},
       dot/.style = {circle, fill, inner sep=2.4pt, node contents={},
                     label=#1},
every edge/.style = {draw, postaction=decorate}
                        ]
\node (21) {$\p{2, 1}$};
\node (22) at (1.5,-1.5) {$\p{2, 2}$};
\node (11) at (-1.5,-1.5) {$\p{1, 1}$};
\node (12) at (0, -3) {$\p{1, 2}$};

\draw[->, thick]   (21) -- (22);
\draw[->, thick]   (21) -- (11);
\draw[->, thick]   (11) -- (12);
\draw[->, thick]   (22) -- (12);
 \end{tikzpicture}
\]
\end{proposition}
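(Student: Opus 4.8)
The plan is to derive all four arrows from the two monotonicity Observations already recorded in the excerpt — namely that every $\p{V, 1}$-pseudo-orbit is a $\p{V, 2}$-pseudo-orbit, and that every $\p{U, 1}$-shadowing point is a $\p{U, 2}$-shadowing point — together with the fact that the \emph{same} witnessing entourage may be reused. The structural point is that increasing $i$ enlarges the class of pseudo-orbits one is required to shadow, while decreasing $j$ weakens what it means to shadow a given pseudo-orbit; thus $\p{2, 1}$ is the strongest property and $\p{1, 2}$ the weakest, and the diagram records exactly the four single-index weakenings.

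For the two arrows that fix $i$ and pass from $j = 1$ to $j = 2$ — that is, $\p{2,1} \to \p{2,2}$ and $\p{1,1} \to \p{1,2}$, which I treat uniformly as $\p{i, 1} \to \p{i, 2}$ — I would argue as follows. Assume $\p{X, G}$ has the $\p{i, 1}$-shadowing property and fix an entourage $U \in \collection{U}$. Take the entourage $V$ guaranteed by the $\p{i, 1}$-property for this $U$. Given any $\p{V, i}$-pseudo-orbit, it is $\p{U, 1}$-shadowed by some $y \in \legal{G}$; by the shadowing-point Observation this same $y$ also $\p{U, 2}$-shadows the pseudo-orbit. Hence the same $V$ witnesses the $\p{i, 2}$-shadowing property for $U$, and since $U$ was arbitrary, $\p{X, G}$ has the $\p{i, 2}$-shadowing property.

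For the two arrows that fix $j$ and pass from $i = 2$ to $i = 1$ — that is, $\p{2,1} \to \p{1,1}$ and $\p{2,2} \to \p{1,2}$, treated uniformly as $\p{2, j} \to \p{1, j}$ — I would argue dually. Assume $\p{X, G}$ has the $\p{2, j}$-shadowing property and fix $U \in \collection{U}$; take the entourage $V$ supplied by the $\p{2, j}$-property for $U$. Let $\sequenceOf{x_n}:{n \in \omega}$ be any $\p{V, 1}$-pseudo-orbit. By the pseudo-orbit Observation it is also a $\p{V, 2}$-pseudo-orbit, hence $\p{U, j}$-shadowed by some point of $\legal{G}$. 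Thus every $\p{V, 1}$-pseudo-orbit is $\p{U, j}$-shadowed, the same $V$ works, and $\p{X, G}$ has the $\p{1, j}$-shadowing property.

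I expect no substantive obstacle: each arrow is a one-line reuse of the witnessing entourage combined with one of the two Observations. The only point needing a little care is that the Observations are stated in the excerpt for the metric formulation, so for arbitrary uniform spaces I would first record their verbatim analogues for $\p{V, i}$-pseudo-orbits and $\p{U, j}$-shadowing points. Both are immediate from the definitions: ``for all $y \in G\of{x_n}$'' implies ``for some $y \in G\of{x_n}$'' once one knows $G\of{x_n} \isNonempty$, which holds because $x_n \in \nondegenerate{G}$, and the analogous triviality for trajectories holds since $y \in \legal{G}$ guarantees $T_G^+\of{y} \isNonempty$.
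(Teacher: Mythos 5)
Your proposal is correct and follows essentially the same route as the paper: each arrow is obtained by reusing the witnessing entourage $V$ and invoking one of the two monotonicity observations (every $\p{V,1}$-pseudo-orbit is a $\p{V,2}$-pseudo-orbit; every $\p{U,1}$-shadowing point is a $\p{U,2}$-shadowing point). Your added remark that the ``all implies some'' steps are licensed by $x_n \in \nondegenerate{G}$ and $y \in \legal{G}$ is a nice piece of care the paper leaves implicit.
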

\begin{proof}
Suppose $\p{X, G}$ is a CR-dynamical system with the $\p{2, 1}$-shadowing property. Let $U \in \collection{U}$. There exists $V \in \collection{U}$ satisfying the $\p{2, 1}$-shadowing property for $\p{X, G}$.  Every $\p{V, 1}$-pseudo-orbit is a $\p{V, 2}$-pseudo-orbit, and therefore it follows $\p{X, G}$ has the $\p{1, 1}$-shadowing property. Similarly, every $\p{U, 1}$-shadowing point is a $\p{U, 2}$-shadowing point, and therefore $\p{X, G}$ has the $\p{2, 2}$-shadowing property. 

Suppose $\p{X, G}$ is a CR-dynamical system with the $\p{1, 1}$-shadowing property. Let $U \in \collection{U}$. There exists $V \in \collection{U}$ satisfying the $\p{1, 1}$-shadowing property for $\p{X, G}$.  Every $\p{U, 1}$-shadowing-point is a $\p{U, 2}$-shadowing-point, and therefore it follows $\p{X, G}$ has the $\p{1, 2}$-shadowing property. 

Suppose $\p{X, G}$ is a CR-dynamical system with the $\p{2, 2}$-shadowing property. Suppose $U \in \collection{U}$. There exists $V \in \collection{U}$ satisfying the $\p{2, 2}$-shadowing property for $\p{X, G}$.  Every $\p{V, 1}$-pseudo-orbit is a $\p{V, 2}$-pseudo-orbit, and therefore it follows $\p{X, G}$ has the $\p{1, 2}$-shadowing property. 
\end{proof}
\begin{note}
For the remainder of this sub-section, we show there are no other arrows in the implication diagram. We make use of results we prove later in this paper. 
\end{note}

\begin{example}
Let $\p{X, G}$ be a CR-dynamical system, where $X = \set{0, 1}$ and $G = X \times X$. By Proposition~\ref{prop:dom-G-finite} and Proposition~\ref{prop:dom-G-finite-21}, $\p{X, G}$ has all $\p{i, j}$-shadowing properties except the $\p{2, 1}$-shadowing property.  
\end{example}

\begin{figure}[h]
\[
\begin{tikzpicture}[
       decoration = {markings,
                     mark=at position .5 with {\arrow{Stealth[length=2mm]}}},
       dot/.style = {circle, fill, inner sep=0pt, node contents={},
                     label=#1},
       every edge/.style = {draw, postaction=decorate}
                        ]

\foreach \n in {1,...,150} { 
    \pgfmathsetmacro{\x}{1/\n} 
    \draw[blue, very thick] (\x*5, 0) -- (\x*5, 5); 
}

\draw[very thick, blue] (0, 0) -- (5, 0); 

\draw[blue, very thick] (0, 0) -- (0, 5); 
\draw (0, 5) -- (5, 5); 

\node at (2.5, -0.5) {$\color{red}\text{$G = \p{[0, 1] \times \set{0} \union \set{0} \times [0, 1]} \union \Union_{n =1}^\infinity \set{\Fraction 1 / n} \times [0, 1]$}$};

\end{tikzpicture}
\]
\caption{The Comb Space from Example~\ref{ex:comb-space}}\label{fig:comb-space}
\end{figure}

\begin{example}\label{ex:comb-space}
Let $\p{X, G}$ be a CR-dynamical system, where $X = [0, 1]$ and $G$ is the Comb Space, 
\[
G = \p{[0, 1] \times \set{0} \union \set{0} \times [0, 1]} \union \Union_{n =1}^\infinity \set{\Fraction 1 / n} \times [0, 1],
\]
depicted in Figure~\ref{fig:comb-space}. We show $\p{X, G}$ has the $\p{2, 2}$-shadowing property, but not the $\p{1, 1}$-shadowing property. To this end, suppose $\epsilon > 0$. Let $\delta \in \p{0, \epsilon}$ be arbitrary. Suppose $\sequenceOf{x_n}:{n \in \omega}$ is a $\p{\delta, 2}$-pseudo-orbit in $\p{X, G}$. For each $n \in \omega$, $G\of{x_n} = \set{0}$ or $G\of{x_n} = X$. Let $y_0 = x_0$, and for each $n \geq 1$ define
\[
y_n = \begin{cases}
      0 &
      \text{if $G\of{x_{n-1}} = \set{0}$;} \\
      x_n &
      \text{if $G\of{x_{n-1}} = X$.}
      \end{cases}
\]
We claim $y_0$ $\p{\epsilon, 2}$-shadows $\sequenceOf{x_n}:{n \in \omega}$. Firstly,  it is clear $\sequenceOf{y_n}:{n \in \omega} \in T_G^+\of{y_0}$.  If $G\of{x_{n-1}} = \set{0}$, then 
\[
d\of{x_n, y_n} = d\of{x_n, 0} \leq \delta < \epsilon. 
\]
Otherwise, 
\[
d\of{x_n, y_n} = d\of{x_n, x_n} = 0 < \epsilon. 
\]
Hence, $d\of{x_n, y_n} < \epsilon$ for each $n \in \omega$, and our claim follows.  Thus, $\p{X, G}$ has the $\p{2, 2}$-shadowing property. 

Now, we show $\p{X, G}$ does not have the $\p{1, 1}$-shadowing property. To this end, take $\epsilon = \Fraction 1/ 4$ and let $\delta > 0$ be arbitrary. Consider $\sequenceOf{x_n}:{n \in \omega}$, where $x_n \leq \delta$ is irrational for each $n \in \omega$. Then $\sequenceOf{x_n}:{n \in \omega}$ is a $\p{\delta, 1}$-pseudo-orbit, as $G\of{x_n} = \set{0}$ for all $n \in \omega$. To derive a contradiction, suppose $y$ $\p{\epsilon, 1}$-shadows $\sequenceOf{x_n}:{n \in \omega}$. Notice $\sequence{y, 0, z, 0, 0, \ldots} \in T_G^+\of{y}$ for each $z \in X$. It follows $d\of{x_2, z} < \Fraction 1 / 4$ for all $z \in X$. But this clearly cannot occur, a contradiction. It follows $\p{X, G}$ does not have the $\p{1, 1}$-shadowing property. 
\end{example}

\begin{figure}[h]
\[
 \begin{tikzpicture}[
       decoration = {markings,
                     mark=at position .5 with {\arrow{Stealth[length=2mm]}}},
       dot/.style = {fill, inner sep=0pt, node contents={},
                     label=#1},
every edge/.style = {draw, postaction=decorate}
                        ]
\node (e) at (2.5, -.25) {$\color{red}\text{$G = \p{X \times \set{0}} \union \Delta_X$}$};

\draw (5, 0) -- (5, 5);
\draw (5, 5) -- (0, 5);
\draw (0, 5) -- (0, 0);
\draw[blue, very thick] (0, 0) -- (5, 5);
\draw[blue, very thick] (0, 0) -- (5, 0);
 \end{tikzpicture}
\]
\caption{The relation $G$ from Example~\ref{ex:diag-const-11}}\label{fig:diag-const-11}
\end{figure}

\begin{example}\label{ex:diag-const-11}
Let $\p{X, G}$ be a CR-dynamical system, where $X = [0, 1]$ and $G = \p{X \times \set{0}} \union \Delta_X$, depicted in Figure~\ref{fig:diag-const-11}. By Proposition~\ref{prop:diagonal-plus-horizontal}, $\p{X, G}$ has the $\p{1, 1}$-shadowing property. We show $\p{X, G}$ does not have the $\p{2, 2}$-shadowing property. Let $\epsilon = \Fraction 1 / 2$, and $\delta > 0$ be arbitrary. There exists $n \in \omega$ such that $\Fraction 1 / n \leq \delta$. Consider $\sequence{0, \Fraction 1 / n, \Fraction 2 / n, \ldots, 1, 1, \ldots}$, which is a $\p{\delta, 2}$-pseudo-orbit. To derive a contradiction, suppose there exists $y \in X$ which $\p{\epsilon, 2}$-shadows our sequence. Then, $d\of{0, y} < \Fraction 1/ 2$, and $d\of{1, y} < \Fraction 1 / 2$ (for $d\of{1, 0} \geq \Fraction 1 / 2$). But by the triangle inequality, we obtain $1 < 1$, a contradiction. It follows $\p{X, G}$ does not have the $\p{2, 2}$-shadowing property. 
\end{example}

These previous three examples show the $\p{i, j}$-shadowing properties are distinct. We now go over basic results in the next sub-section, including a generalisation of Lemma~\ref{lem:invariance-under-comp-topological}.

\subsection{Basic results}
\begin{proposition}\label{prop:topological-relation-correspondence}
Suppose $\p{X, f}$ is a topological dynamical system. Then, $\p{X, f}$ has the shadowing property if, and only if, $\p{X, \Graph{f}}$ has the $\p{i, j}$-shadowing property for all $i, j \in \set{1, 2}$. 
\end{proposition}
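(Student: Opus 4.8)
The plan is to exploit the fact that when $G = \Graph{f}$ is the graph of a function, the image $G\of{x} = \set{f\of{x}}$ is a singleton for every $x \in X$, and that this singleton structure collapses all four $\p{i, j}$-distinctions onto the single classical notion of shadowing from Definition~\ref{def:metric-shadowing-topological}.

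First I would record two structural facts. Since $f$ is defined on all of $X$, for each $x$ we have $\p{x, f\of{x}} \in G$, so every point is non-degenerate and $\nondegenerate{G} = X$; likewise each $x$ admits the unique trajectory $\sequence{x, f\of{x}, f^2\of{x}, \ldots} \in T_G^+\of{x}$, so every point is legal, $T_G^+\of{x}$ is a singleton, and $\legal{G} = X$. Consequently the pseudo-orbits of $\p{X, G}$ are exactly the arbitrary sequences in $X$, and the candidate shadowing points range over all of $X$, matching the classical setup rather than some proper restriction.

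Next I would collapse the two indices. Because $G\of{x_n} = \set{f\of{x_n}}$, the clause ``$d\of{x_{n+1}, y} \leq \delta$ for \textbf{all} $y \in G\of{x_n}$'' and the clause ``for \textbf{some} $y \in G\of{x_n}$'' are literally the same requirement, each reducing to $d\of{f\of{x_n}, x_{n+1}} \leq \delta$; hence a $\p{\delta, 1}$-pseudo-orbit, a $\p{\delta, 2}$-pseudo-orbit, and a classical $\delta$-pseudo-orbit coincide. Symmetrically, since $T_G^+\of{y}$ is the singleton $\set{\sequence{y, f\of{y}, f^2\of{y}, \ldots}}$, the ``for all'' and ``for some'' trajectory quantifiers agree, and $\p{\epsilon, j}$-shadowing reduces to $d\of{f^n\of{y}, x_n} < \epsilon$ for all $n$, independently of $j$. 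The identical collapse works verbatim in the uniform formulation, replacing $\delta, \epsilon$ by entourages $V, U$ and the metric inequalities by membership $\p{y, x_{n+1}} \in V$ and $\p{f^n\of{y}, x_n} \in U$.

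Combining these observations, the $\p{i, j}$-shadowing property of $\p{X, \Graph{f}}$ is, word for word, the classical shadowing property of $\p{X, f}$ for each of the four choices of $\p{i, j}$, so the stated biconditional is immediate. I do not expect a genuine obstacle here; the only point demanding care is the verification that $\nondegenerate{G} = X$ and $\legal{G} = X$, since these are what guarantee the family of admissible pseudo-orbits and the set of eligible shadowing points coincide with the classical ones, so that the collapse of the quantifiers really does recover Definition~\ref{def:metric-shadowing-topological} exactly.
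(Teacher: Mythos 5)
Your proposal is correct and takes the same route as the paper, which simply states that the result ``follows by definition''; you have just made explicit the collapse of the two quantifier pairs via $G\of{x} = \set{f\of{x}}$ and the uniqueness of trajectories, together with the checks $\nondegenerate{\Graph{f}} = X$ and $\legal{\Graph{f}} = X$. No gaps.
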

\begin{proof}
Follows by definition. 
\end{proof}

\begin{lemma}\label{lem:extension}
Suppose $\p{X, G}$ and $\p{X, H}$ are CR-dynamical systems such that $G \subseteq H$ and  $\nondegenerate{G} = \nondegenerate{H}$. If $\p{X, G}$ has the $\p{1, 2}$-shadowing property, then $\p{X, H}$ has the $\p{1, 2}$-shadowing property. 
\end{lemma}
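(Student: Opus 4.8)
The plan is to verify the $\p{1,2}$-shadowing property for $\p{X, H}$ directly from the definition, using the very same entourage $V$ that witnesses it for $\p{X, G}$ together with the same shadowing point. The guiding observation is that both indices involved here cooperate with the inclusion $G \subseteq H$: the universal quantifier in the $i = 1$ pseudo-orbit clause makes that condition \emph{weaken} as the relation shrinks, while the existential quantifier over trajectories in the $j = 2$ shadowing clause makes shadowing \emph{persist} as the relation grows.

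Concretely, I would fix an arbitrary entourage $U \in \collection{U}$ and invoke the $\p{1, 2}$-shadowing property of $\p{X, G}$ to produce $V \in \collection{U}$ such that every $\p{V, 1}$-pseudo-orbit of $\p{X, G}$ is $\p{U, 2}$-shadowed by some point of $X$. The claim to establish is that this $V$ also works for $\p{X, H}$. So let $\sequenceOf{x_n}:{n \in \omega}$ be a $\p{V, 1}$-pseudo-orbit of $\p{X, H}$; since $\nondegenerate{H} = \nondegenerate{G}$, this sequence already lies in $\nondegenerate{G}$. The $\p{V, 1}$ condition demands $\p{y, x_{n+1}} \in V$ for \emph{all} $y \in H\of{x_n}$, and because $G \subseteq H$ gives $G\of{x_n} \subseteq H\of{x_n}$, the same holds for all $y \in G\of{x_n}$. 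Hence $\sequenceOf{x_n}:{n \in \omega}$ is also a $\p{V, 1}$-pseudo-orbit of $\p{X, G}$.

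By the choice of $V$, there is then a point $y \in \legal{G}$ that $\p{U, 2}$-shadows the sequence in $\p{X, G}$; unwinding the $j = 2$ definition, this supplies a trajectory $\sequenceOf{y_n}:{n \in \omega} \in T_G^+\of{y}$ with $\p{y_n, x_n} \in U$ for every $n \in \omega$. Because the consecutive pairs of such a trajectory lie in $G \subseteq H$, we have $T_G^+\of{y} \subseteq T_H^+\of{y}$, so $y \in \legal{H}$ and the same sequence $\sequenceOf{y_n}:{n \in \omega}$ is a witnessing trajectory in $T_H^+\of{y}$. Thus $y$ $\p{U, 2}$-shadows $\sequenceOf{x_n}:{n \in \omega}$ in $\p{X, H}$, and $\p{X, H}$ has the $\p{1, 2}$-shadowing property.

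I do not anticipate a genuine obstacle; the argument is a monotonicity bookkeeping with no compactness or limiting input. The one point deserving care is the quantifier matching---confirming that ``for all $y \in H\of{x_n}$'' transfers the pseudo-orbit condition \emph{downward} to $G$ while ``for some trajectory'' transfers shadowing \emph{upward} to $H$---together with pinpointing that $\nondegenerate{G} = \nondegenerate{H}$ is used exactly once, namely to ensure the $H$-pseudo-orbit qualifies as a sequence in $\nondegenerate{G}$.
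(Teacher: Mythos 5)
Your proposal is correct and follows essentially the same route as the paper's proof: the same entourage $V$ is reused, the universal quantifier in the $(V,1)$ pseudo-orbit condition passes down from $H$ to $G$ via $G\of{x_n} \subseteq H\of{x_n}$, and the witnessing trajectory passes up via $T_G^+\of{y} \subseteq T_H^+\of{y}$. Your remark that $\nondegenerate{G} = \nondegenerate{H}$ is used exactly once, to ensure the sequence qualifies as a pseudo-orbit of $\p{X,G}$, matches the paper's use of that hypothesis.
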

\begin{proof}
Suppose $\p{X, G}$ has the $\p{1, 2}$-shadowing property. Let $U$ be an entourage of $X$. In $\p{X, G}$, there exists entourage $V$ of $X$ such that every $\p{V, 1}$-pseudo-orbit is $\p{U, 2}$-shadowed by a point in $X$. Suppose $\sequenceOf{x_n}:{n \in \omega}$ is a $\p{V, 1}$-pseudo-orbit in $\p{X, H}$.  Then, $\sequenceOf{x_n}:{n \in \omega}$ is a sequence in $\nondegenerate{H} = \nondegenerate{G}$ where $G\of{x_n} \times \set{x_{n+1}} \subseteq H\of{x_n} \times \set{x_{n+1}} \subseteq V$ for each $n \in \omega$. It follows $\sequenceOf{x_n}:{n \in \omega}$ is a $\p{V, 1}$-pseudo-orbit in $\p{X, G}$. Therefore, $\sequenceOf{x_n}:{n \in \omega}$ is $\p{U, 2}$-shadowed by a legal point $y$ in $\p{X, G}$. Since $T_G^+\of{y} \subseteq T_H^+\of{y}$, $\sequenceOf{x_n}:{n \in \omega}$ is $\p{U, 2}$-shadowed by $y$ in $\p{X, H}$. Thus, $\p{X, H}$ has the $\p{1, 2}$-shadowing property. 
\end{proof}

\begin{corollary}
Let $\p{X, f}$ be a topological dynamical system with the shadowing property. If $\p{X, G}$ is a CR-dynamical system with $\Graph{f} \subseteq G$, then $\p{X, G}$ has the $\p{1, 2}$-shadowing property.  
\end{corollary}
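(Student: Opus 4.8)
The plan is to chain together the two results immediately preceding the corollary, namely Proposition~\ref{prop:topological-relation-correspondence} and Lemma~\ref{lem:extension}. Morally, $\Graph{f}$ is a CR-dynamical system that inherits all four shadowing properties from the topological dynamical system $\p{X, f}$, and $G$ is an ``extension'' of $\Graph{f}$ sharing exactly the same non-degenerate points; the $\p{1, 2}$-shadowing property should then transfer upward from $\Graph{f}$ to $G$.

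First I would apply Proposition~\ref{prop:topological-relation-correspondence}: since $\p{X, f}$ has the shadowing property, $\p{X, \Graph{f}}$ has the $\p{i, j}$-shadowing property for all $i, j \in \set{1, 2}$, and in particular the $\p{1, 2}$-shadowing property. Next, I would set up Lemma~\ref{lem:extension} with the roles $G \mapsto \Graph{f}$ and $H \mapsto G$. This requires verifying its two hypotheses. The containment $\Graph{f} \subseteq G$ is given. The remaining hypothesis, $\nondegenerate{\Graph{f}} = \nondegenerate{G}$, is where the one real observation lies: because $f$ is a total self-map, for every $x \in X$ the pair $\p{x, f\of{x}}$ lies in $\Graph{f}$, so $x$ is non-degenerate in $\Graph{f}$; hence $\nondegenerate{\Graph{f}} = X$. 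Since $\Graph{f} \subseteq G$, those same pairs $\p{x, f\of{x}}$ also lie in $G$, so every $x \in X$ is non-degenerate in $G$ as well, giving $\nondegenerate{G} = X = \nondegenerate{\Graph{f}}$.

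With both hypotheses confirmed, Lemma~\ref{lem:extension} immediately yields that $\p{X, G}$ has the $\p{1, 2}$-shadowing property, completing the argument. The only subtlety—and hence the step I would flag—is the non-degeneracy equality; everything else is a direct citation. The point worth stressing is that the \emph{totality} of the function $f$ is precisely what forces full non-degeneracy, which is what makes the hypotheses of Lemma~\ref{lem:extension} available here.
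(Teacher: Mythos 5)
Your proposal is correct and follows exactly the paper's route: the paper's proof is the one-line citation of Proposition~\ref{prop:topological-relation-correspondence} and Lemma~\ref{lem:extension}, and your verification that $\nondegenerate{\Graph{f}} = X = \nondegenerate{G}$ (via totality of $f$ and $\Graph{f} \subseteq G$) correctly fills in the hypothesis the paper leaves implicit.
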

\begin{proof}
Follows from Proposition~\ref{prop:topological-relation-correspondence} and Lemma~\ref{lem:extension}.  
\end{proof}

\begin{lemma}\label{lem:contraction}
Suppose $\p{X, G}$ and $\p{X, H}$ are CR-dynamical systems such that $G \subseteq H$ and $\legal{G} = \legal{H}$. If $\p{X, H}$ has the $\p{2, 1}$-shadowing property, then $\p{X, G}$ has the $\p{2, 1}$-shadowing property.
\end{lemma}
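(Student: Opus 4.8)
The plan is to run the proof of Lemma~\ref{lem:extension} ``in reverse'', exploiting that for the $\p{2, 1}$-case the two inclusions coming from $G \subseteq H$ both point in the helpful direction. Recall that $\p{2,1}$-shadowing pairs the ``some'' pseudo-orbit condition ($i = 2$) with the ``all'' shadowing condition ($j = 1$). Since $G \subseteq H$ yields $G\of{x} \subseteq H\of{x}$ for each $x$, hence $\nondegenerate{G} \subseteq \nondegenerate{H}$ and $T_G^+\of{y} \subseteq T_H^+\of{y}$, each of these inclusions is exactly what is needed to push a $G$-pseudo-orbit up into $H$ and to pull an $H$-shadowing relation back down to $G$.

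Concretely, I would fix an entourage $U$ and invoke the $\p{2,1}$-shadowing of $\p{X, H}$ to obtain an entourage $V$ for which every $\p{V, 2}$-pseudo-orbit in $\p{X, H}$ is $\p{U, 1}$-shadowed by a point in $X$; I then claim this same $V$ works for $\p{X, G}$. Taking an arbitrary $\p{V, 2}$-pseudo-orbit $\sequenceOf{x_n}:{n \in \omega}$ in $\p{X, G}$, I would check it is also a $\p{V, 2}$-pseudo-orbit in $\p{X, H}$: the sequence lies in $\nondegenerate{G} \subseteq \nondegenerate{H}$, and any witness $z \in G\of{x_n}$ to the ``some'' condition satisfies $z \in H\of{x_n}$ as well. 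The choice of $V$ then supplies a legal point $y \in \legal{H}$ that $\p{U, 1}$-shadows $\sequenceOf{x_n}:{n \in \omega}$ in $\p{X, H}$.

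It remains to transfer this shadowing back to $\p{X, G}$. The hypothesis $\legal{G} = \legal{H}$ gives $y \in \legal{G}$, so $y$ is an admissible shadowing point for $\p{X, G}$, and the inclusion $T_G^+\of{y} \subseteq T_H^+\of{y}$ finishes the argument: since $\p{y_n, x_n} \in U$ holds for \textbf{every} trajectory in the larger set $T_H^+\of{y}$, it holds in particular for every trajectory in the smaller set $T_G^+\of{y}$, which is precisely the $\p{U, 1}$-shadowing condition in $\p{X, G}$. I expect the only genuinely load-bearing point --- and the reason this lemma requires $\legal{G} = \legal{H}$ in place of the condition $\nondegenerate{G} = \nondegenerate{H}$ used in Lemma~\ref{lem:extension} --- to be the verification that the shadowing point produced for $H$ remains \emph{legal} for $G$; the ``all-trajectories'' clause of the $j = 1$ condition is then discharged automatically by the trajectory inclusion.
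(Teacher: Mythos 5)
Your proposal is correct and follows essentially the same route as the paper's proof: reuse the entourage $V$ furnished by $\p{X, H}$, observe that a $\p{V,2}$-pseudo-orbit in $\p{X, G}$ is one in $\p{X, H}$ via $G \subseteq H$, and transfer the shadowing point back using $\legal{G} = \legal{H}$ together with $T_G^+\of{y} \subseteq T_H^+\of{y}$. Your closing remark correctly identifies the legality of the shadowing point as the step that forces the hypothesis $\legal{G} = \legal{H}$, which is exactly where the paper invokes $\emptySet \neq T_G^+\of{y}$.
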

\begin{proof}
Suppose $\p{X, H}$ has the $\p{2, 1}$-shadowing property.  Let $U$ be an entourage of $X$. In $\p{X, H}$, there exists entourage $V$ of $X$ such that every $\p{V, 2}$-pseudo-orbit is $\p{U, 1}$-shadowed by a point in $X$. Suppose $\sequenceOf{x_n}:{n \in \omega}$ is a $\p{V, 2}$-pseudo-orbit in $\p{X, G}$. Then $\sequenceOf{x_n}:{n \in \omega}$ is a $\p{V, 2}$-pseudo-orbit in $\p{X, H}$. There exists $y \in \legal{H} = \legal{G}$ which $\p{U, 1}$-shadows $\sequenceOf{x_n}:{n \in \omega}$ in $\p{X, H}$. Since $\emptySet \neq T_G^+\of{y} \subseteq T_H^+\of{y}$, $\sequenceOf{x_n}:{n \in \omega}$ is $\p{U, 1}$-shadowed by $y$ in $\p{X, G}$. Thus, $\p{X, G}$ has the $\p{2, 1}$-shadowing property. 
\end{proof}


\begin{lemma}\label{lem:legal-same}
Suppose $\p{X, G}$ is a CR-dynamical system. Then, $\legal{G} = \legal{G^k}$ for all $k \geq 1$. 
\end{lemma}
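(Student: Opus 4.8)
The plan is to prove the two inclusions $\legal{G} \subseteq \legal{G^k}$ and $\legal{G^k} \subseteq \legal{G}$ directly, exploiting the correspondence between a single $G^k$-step and a length-$k$ path in the Mahavier product $\mahavier{k}{G}$. Throughout I would work with trajectories, recalling that $x$ is legal in $\p{X, G}$ precisely when $T_G^+\of{x} \neq \emptySet$.

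First I would establish $\legal{G} \subseteq \legal{G^k}$. Given $x \in \legal{G}$, choose a trajectory $\sequenceOf{x_n}:{n \in \omega} \in T_G^+\of{x}$. I claim the ``sampled'' sequence $\sequenceOf{x_{kn}}:{n \in \omega}$ is a trajectory of $x$ in $\p{X, G^k}$. Indeed, for each $n$ the segment $\sequence{x_{kn}, x_{kn+1}, \ldots, x_{k\p{n+1}}}$ lies in $\mahavier{k}{G}$, so by the definition of $G^k$ we get $\p{x_{kn}, x_{k\p{n+1}}} \in G^k$; hence the sampled sequence lies in $X_{G^k}^+$ and starts at $x$, witnessing $x \in \legal{G^k}$.

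Next I would establish the reverse inclusion $\legal{G^k} \subseteq \legal{G}$. Given $x \in \legal{G^k}$, choose $\sequenceOf{z_n}:{n \in \omega} \in T_{G^k}^+\of{x}$. For each $n$, since $\p{z_n, z_{n+1}} \in G^k$ there is a length-$k$ path in $\mahavier{k}{G}$ from $z_n$ to $z_{n+1}$. Concatenating these blocks --- identifying the shared endpoint $z_{n+1}$ terminating the $n$-th block with the starting point of the $\p{n+1}$-th block --- produces an infinite sequence in $X_G^+$ beginning at $z_0 = x$, which witnesses $x \in \legal{G}$.

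An alternative, index-free route runs through Theorem~\ref{thm:illegal-characterisation}, which says $x$ is legal exactly when $G^n\of{x} \neq \emptySet$ for every positive integer $n$. The crux there is the identity $\p{G^k}^m\of{x} = G^{km}\of{x}$, proved by the same chop-and-glue argument; legality in $G$ yields $G^{km}\of{x} \neq \emptySet$ for all $m$, hence legality in $G^k$, while legality in $G^k$ yields $G^{km}\of{x} \neq \emptySet$ for all $m$, and truncating such a path down to any length $n \leq km$ gives $G^n\of{x} \neq \emptySet$ for all $n$, hence legality in $G$. In either approach the only delicate point is the bookkeeping at the ``seams'' of the concatenation: one must verify the Mahavier condition $\p{w_n, w_{n+1}} \in G$ at every index, including where one length-$k$ block is glued to the next. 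This is routine, but it is where essentially all the content of the lemma resides.
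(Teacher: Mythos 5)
Your proposal is correct and follows essentially the same route as the paper: the forward inclusion by sampling a $G$-trajectory at multiples of $k$, and the reverse inclusion by gluing together the length-$k$ paths in $\mahavier{k}{G}$ witnessing each $G^k$-step. The alternative via Theorem~\ref{thm:illegal-characterisation} is a valid variant but adds nothing the direct trajectory argument does not already give.
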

\begin{proof}
Let $k \geq 1$ and $x \in X$. Each $\sequenceOf{x_n}:{n \in \omega} \in T_{G}^+\of{x}$ has $\sequenceOf{x_{nk}}:{n \in \omega} \in T_{G^k}^+\of{x}$. Further, each $\sequenceOf{x_{nk}}:{n \in \omega} \in T_{G^k}^+\of{x}$ has some $\sequenceOf{y_n}:{n \in \omega} \in T_{G}^+\of{x}$, where $y_{nk} = x_{nk}$ for each $n \in \omega$, as there is a path from $x_{nk}$ to $x_{\p{n+1} k}$ in $G$ for each $n \in \omega$. 
\end{proof}

Recall we always assume $\legal{G} \neq \emptySet$, implying $\p{X, G^k}$ is a CR-dynamical system for each $k \geq 1$. We now generalise Lemma~\ref{lem:invariance-under-comp-topological} below.

\begin{lemma}\label{lem:invariance-under-comp}
Let $\p{X, G}$ be a CR-dynamical system, and $j \in \set{1, 2}$. Then, $\p{X, G}$ has the $\p{2, j}$-shadowing property if, and only if, $\p{X, G^k}$ has the $\p{2, j}$-shadowing property for all positive integers $k$.
\end{lemma}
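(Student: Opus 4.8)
The plan is to prove the two directions separately, with essentially all of the content residing in the forward implication. The backward implication is immediate: if $\p{X, G^k}$ has the $\p{2, j}$-shadowing property for \emph{all} positive integers $k$, then in particular $k = 1$ gives it for $G^1 = G$. So I would only need to show that if $\p{X, G}$ has the $\p{2, j}$-shadowing property, then $\p{X, G^k}$ has it for each fixed positive integer $k$ (recall the standing assumption guarantees $\p{X, G^k}$ is itself a CR-dynamical system).

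For the forward direction, fix $k$ and an entourage $U$. Applying the $\p{2, j}$-shadowing property of $\p{X, G}$ to $U$ yields an entourage $V$ such that every $\p{V, 2}$-pseudo-orbit in $\p{X, G}$ is $\p{U, j}$-shadowed by a point. I claim the \emph{same} $V$ witnesses the $\p{2, j}$-shadowing property of $\p{X, G^k}$ for $U$. The central device is an interpolation turning a $\p{V, 2}$-pseudo-orbit of $G^k$ into one of $G$. Given a $\p{V, 2}$-pseudo-orbit $\sequenceOf{x_m}:{m \in \omega}$ of $G^k$, for each $m$ I choose a witness $w_m \in G^k\of{x_m}$ with $\p{w_m, x_{m+1}} \in V$, together with a $G$-path $x_m = a^m_0, a^m_1, \ldots, a^m_k = w_m$ realizing $w_m \in G^k\of{x_m}$. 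Splicing these paths — placing $a^m_i$ at position $mk + i$ for $0 \leq i \leq k - 1$, so that position $mk$ carries $x_m$ — produces a sequence $\sequenceOf{z_p}:{p \in \omega}$.

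I would then verify that $\sequenceOf{z_p}:{p \in \omega}$ is a $\p{V, 2}$-pseudo-orbit of $G$. Each $z_p$ lies on a spliced $G$-path and so has a $G$-successor, hence $z_p \in \nondegenerate{G}$; and the ``some $y \in G\of{z_p}$ with $\p{y, z_{p+1}} \in V$'' requirement is checked in two cases. At an interior position one takes $y = z_{p+1}$, which lies in $G\of{z_p}$ and satisfies $\p{y, z_{p+1}} \in \Delta_X \subseteq V$; at a block boundary ($p = mk + (k-1)$, where $z_{p+1} = x_{m+1}$) one takes $y = w_m \in G\of{z_p}$, which satisfies $\p{w_m, x_{m+1}} \in V$. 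This case is exactly where the restriction to $i = 2$ is essential: for $i = 1$ the interior condition would demand that \emph{every} $G$-successor of $z_p$ be $V$-close to $z_{p+1}$, which the splicing cannot guarantee.

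Finally I would transfer the shadowing back. By choice of $V$ there is $\hat y \in \legal{G} = \legal{G^k}$ (Lemma~\ref{lem:legal-same}) that $\p{U, j}$-shadows $\sequenceOf{z_p}:{p \in \omega}$, and I restrict attention to the positions $p = mk$, where $z_{mk} = x_m$. For $j = 2$, a witnessing trajectory in $T_G^+\of{\hat y}$ restricts along this progression to a trajectory in $T_{G^k}^+\of{\hat y}$ that stays $U$-close to $\sequenceOf{x_m}:{m \in \omega}$. For $j = 1$, I would invoke the trajectory correspondence of Lemma~\ref{lem:legal-same} in the opposite direction: every element of $T_{G^k}^+\of{\hat y}$ is the $k$-subsequence of some element of $T_G^+\of{\hat y}$, which is $U$-close to $\sequenceOf{z_p}:{p \in \omega}$ by hypothesis, forcing the given $G^k$-trajectory to be $U$-close to $\sequenceOf{x_m}:{m \in \omega}$. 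The main obstacle is purely bookkeeping: fixing the splicing indices correctly and matching the ``all''/``some'' quantifiers in the $\p{U, 1}$- versus $\p{U, 2}$-shadowing definitions against the two directions of the $G$--$G^k$ trajectory correspondence. Notably, no uniform continuity is needed, because the interpolation is exact.
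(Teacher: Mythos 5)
Your proposal is correct and follows essentially the same route as the paper: interpolate the $\p{V,2}$-pseudo-orbit of $G^k$ into one of $G$ by splicing in $G$-paths to the chosen witnesses, use $\Delta_X \subseteq V$ at interior positions and the witness at block boundaries, then transfer the shadowing point back via $\legal{G} = \legal{G^k}$ and the $k$-subsequence trajectory correspondence. The only difference is that you spell out the $j=1$ versus $j=2$ transfer in more detail than the paper, which dismisses it as clear.
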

\begin{proof}
Necessity is obvious, so we only provide proof of the forward direction; to this end, suppose $\p{X, G}$ has the $\p{2, j}$-shadowing property,  and that $k$ is a positive integer. Let $U$ be an entourage of $X$. Then, there exists entourage $V$ of $X$ satisfying the $\p{2, j}$-shadowing property.  Suppose $\sequenceOf{x_n}:{n \in \omega}$ is a $\p{V, 2}$-pseudo-orbit in $\p{X, G^k}$. That is to say, for each $n \in \omega$, there exists $y \in G^k\of{x_n}$ such that $\p{y, x_{n+1}} \in V$. Construct a sequence $\sequenceOf{y_n}:{n \in \omega}$ in $\nondegenerate{G}$ such that we take $y_0 = x_0$, $y_1 \in G\of{y_0}$, $y_2 \in G\of{y_1}$, $\ldots$, $y_{k-1} \in G\of{y_{k-2}}$, $y_k = x_1$, and so on (observe $y_{nk} = x_n$ for each $n \in \omega$). Choose this in such a way so that, for each positive integer $n$, $y_{nk - 1} \in G^{k-1}\of{x_{n-1}}$ such that there is $y \in G\of{y_{nk-1}}$ with $\p{y, x_n} \in V$ (which we can do because $\sequenceOf{x_n}:{n \in \omega}$ is a $\p{V, 2}$-pseudo-orbit of $\p{X, G^k}$). 

Now, we claim that, for each $n \in \omega$, there exists $y \in G\of{y_n}$ such that $\p{y, y_{n+1}} \in V$. That is to say, we claim that $\sequenceOf{y_n}:{n \in \omega}$ is a $\p{V, 2}$-pseudo-orbit in $\p{X, G}$. Notice that if $n + 1$ is not divisible by $k$, that $y_{n+1} \in G\of{y_n}$. Since $V$ is an entourage, it contains the diagonal. It follows that $\p{y_{n+1}, y_{n+1}} \in V$. If on the other hand $n + 1$ is divisible by $k$, say $n + 1 = mk$, then $y_{n+1} = x_m$ and $y_n \in G^{k-1}\of{x_{m-1}}$. By our construction, there exists $y \in G\of{y_{n}} \subseteq G^k\of{x_{m-1}}$ with $\p{y, y_{n+1}} = \p{y, x_m} \in V$. Hence, the claim follows. 

Thus, there exists $z \in \legal{G} = \legal{G^k}$ which $\p{U, j}$-shadows $\sequenceOf{y_n}:{n \in \omega}$ in $\p{X, G}$. Furthermore, it is clear $z$ $\p{U, j}$-shadows $\sequenceOf{x_n}:{n \in \omega}$ in $\p{X, G^k}$, and thus $\p{X, G^k}$ has the $\p{2, j}$-shadowing property. 
\end{proof}

\begin{example}\label{ex:comp-fails}
We show Lemma~\ref{lem:invariance-under-comp} fails for the $\p{1, j}$-shadowing properties. Let $\p{X, G}$ be a CR-dynamical system for which $X = [0, 1] \union \set{2}$ and 
\[
G = \p{\left[0, \Fraction 1 / 2\right] \times \set{1}} \union L_1 \union L_2 \union \set{\p{2, 2}}, 
\]
where 
\[
L_1 = \setOf{\p{x, y} \in \left[0, \Fraction 1 / 2\right] \times X}:{y = x + \Fraction 1 / 2} \text{ and } L_2 = \setOf{\p{x, y} \in \left[\Fraction 1 / 2, \Fraction 3 / 4\right] \times X}:{y = x - \Fraction 1 / 2}.
\] 
Then, 
\[
G^2 = \set{\p{0, 1}} \union \left[\Fraction 1 / 2, \Fraction 3 / 4\right] \times \set{1} \union \Delta_X^\prime, 
\]
where 
\[
\Delta_X^\prime = \Delta_X \intersect \p{\left[0, \Fraction 1 / 4\right]^2 \union \left[\Fraction 1 / 2, \Fraction 3 / 4\right]^2 \union \set{\p{2, 2}}}. 
\]
Both $G$ and $G^2$ are shown in Figure~\ref{fig:comp-fails}. 

\begin{figure}[h]
\[
 \begin{tikzpicture}[
       decoration = {markings,
                     mark=at position .5 with {\arrow{Stealth[length=2mm]}}},
       dot/.style = {circle, fill, inner sep=0pt, node contents={},
                     label=#1},
       dots/.style = {circle, fill, inner sep=1.5pt, node contents={},
     label=#1},
every edge/.style = {draw, postaction=decorate}
                        ]
\node[blue] (z) at (0, 2) [dot];
\node[blue] (k) at (3, 1) [dot];
\node (e) at (2.5, -.25) {$\color{red}\text{$G$}$};

\node (a0) at (0, -.25) {$0$};
\node (a1) at (4, -.25) {$1$};
\node (a2) at (-.25, 4) {$1$};
\node (a3) at (-.25, 5) {$2$};
\node (a4) at (5, -.25) {$2$};

\node[blue] (z1) at (8, 2) [dot];
\node[blue] (z2) at (9, 3) [dot];
\node[blue] (z3) at (8, 4) [dot];
\node[blue] (z4) at (9, 4) [dot];
\node[blue] (z5) at (7, 1) [dot];
\node (e1) at (8.5, -.25) {$\color{red}\text{$G^2$}$};

\node (b0) at (6, -.25) {$0$};
\node (b1) at (10, -.25) {$1$};
\node (b2) at (5.75, 4) {$1$};
\node (b3) at (5.75, 5) {$2$};
\node (b4) at (11, -.25) {$2$};

\draw (0, 0) -- (4, 0);
\draw[dashed] (4, 0) -- (4, 5);
\draw[dashed] (5, 4) -- (0, 4);
\draw (0, 4) -- (0, 0);
\draw[dashed] (0, 0) -- (5, 5);
\draw (4, 0) -- (5, 0);
\draw (0, 4) -- (0, 5); 
\draw (5, 0) -- (5, 5);
\draw (0, 5) -- (5, 5);
\draw[blue, very thick] (0, 4) -- (2, 4);
\draw[blue, very thick] (0, 2) -- (2, 4);
\draw[blue, very thick] (2, 0) -- (3, 1);

\draw (6, 0) -- (10, 0);
\draw (10, 0) -- (11, 0);
\draw (6, 4) -- (6, 5);
\draw (6, 5) -- (11, 5);
\draw (11, 0) -- (11, 5);
\draw[dashed] (6, 0) -- (11, 5);
\draw[dashed] (10, 0) -- (10, 5);
\draw[dashed] (11, 4) -- (6, 4);
\draw (6, 4) -- (6, 0);
\draw[blue, very thick] (8, 2) -- (9, 3);
\draw[blue, very thick] (8, 4) -- (9, 4);
\draw[blue, very thick] (6, 0) -- (7, 1);

\node[blue] (z6) at (6, 4) [dots];

\node[blue] (z7) at (5, 5) [dots];

\node[blue] (z8) at (11, 5) [dots];
 \end{tikzpicture}
\]
\caption{The relations $G$ and $G^2$ from Example~\ref{ex:comp-fails}}\label{fig:comp-fails}
\end{figure}

To see $\p{X, G}$ has the $\p{1, j}$-shadowing property, we show $\p{X, G}$ has exactly one $\p{\delta, 1}$-pseudo-orbit for $\delta \in \p{0, \Fraction 1/ 4}$.  In particular, we claim $\sequence{2, 2, \ldots}$ is the only $\p{\delta, 1}$-pseudo-orbit. Note firstly it is clear $\sequence{2, 2, \ldots}$ is a $\p{\delta, 1}$-pseudo-orbit, and that it is $\p{\epsilon, j}$-shadowed by $2$, for each $\epsilon > 0$.  Now, to this end, we suppose for contradiction that $\sequenceOf{x_n}:{n \in \omega}$ is a different $\p{\delta, 1}$-pseudo-orbit of $\p{X, G}$. Then, for each $n$, $x_n \in [0, \Fraction 1/ 2]$ or $x_n \in (\Fraction 1 / 2, \Fraction 3 / 4]$. In the former case, $1 \in G\of{x_n}$, implying $d\of{1, x_{n+1}} \leq \delta$. However, $\nondegenerate{G} = [0, \Fraction 3/ 4] \union \set{2}$, and $\delta \in \p{0, \Fraction 1 / 4}$, so this cannot be the case. So, we must have $x_n \in (\Fraction 1 / 2, \Fraction 3 / 4]$ for each $n \in \omega$, implying $d\of{x_n - \Fraction 1 / 2, x_{n+1}} \leq \delta$ for each $n \in \omega$. However, this implies $x_{n+1} \in [0, \Fraction 1 / 2]$, so we reach the former case, a contradiction.  It follows $\p{X, G}$ has the $\p{1, j}$-shadowing property. 

To see why $\p{X, G^2}$ does not have $\p{1, j}$-shadowing property, we take $\epsilon = \Fraction 1 / {16}$ and $\delta > 0$, and consider $\p{\delta, 1}$-pseudo-orbit $\sequence{\Fraction 1 / {4n}, \Fraction 2 / {4n}, \ldots, \Fraction 1 / 4, \Fraction 1 / 4, \ldots}$, where $\Fraction 1 / n \leq \delta$. Then, if $x$ $\p{\epsilon, j}$-shadows the sequence, we have $d\of{x, \Fraction 1/ {4n}} < \Fraction 1 / {16}$ and $d\of{x, \Fraction 1 / 4} < \Fraction 1 / {16}$. But by the triangle inequality we obtain $\Fraction 1 / 4 - \Fraction 1 / {4n} < \Fraction 1 / 8$, a contradiction (choosing $n$ large enough). Thus, $\p{X, G^2}$ does not have the $\p{1, j}$-shadowing property. 
\end{example}

\begin{proposition}\label{prop:invariance-under-comp-12}
Let $\p{X, G}$ be a CR-dynamical system such that $G \subseteq G^2 \subseteq G^3 \subseteq \ldots$. Then $\p{X, G}$ has the $\p{1, 2}$-shadowing property if, and only if, $\p{X, G^k}$ has the $\p{1, 2}$-shadowing property for all $k \geq 1$. 
\end{proposition}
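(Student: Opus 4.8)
The plan is to prove the nontrivial forward implication and to note that the reverse implication is immediate: assuming $\p{X, G^k}$ has the $\p{1, 2}$-shadowing property for all $k \geq 1$ and taking $k = 1$ gives that $\p{X, G}$ itself has the property. Thus the whole content is to show that if $\p{X, G}$ has the $\p{1, 2}$-shadowing property, then so does $\p{X, G^k}$ for each fixed $k \geq 1$. Unlike Lemma~\ref{lem:invariance-under-comp}, I expect no interpolation argument is needed; the monotonicity of the powers of $G$ does all the work.

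First I would record the consequences of the chain hypothesis. From $G \subseteq G^2 \subseteq G^3 \subseteq \ldots$ we get $G \subseteq G^k$, hence $G\of{x} \subseteq G^k\of{x}$ for every $x$. Two facts follow: (i) every $G$-trajectory $\sequenceOf{z_n}:{n \in \omega}$ is also a $G^k$-trajectory, since $\p{z_n, z_{n+1}} \in G \subseteq G^k$ for all $n$; and (ii) $\nondegenerate{G^k} \subseteq \nondegenerate{G}$, which in fact holds with no hypothesis, as a length-$k$ path out of $x$ supplies a first edge witnessing $x \in \nondegenerate{G}$. I also recall $\legal{G} = \legal{G^k}$ from Lemma~\ref{lem:legal-same}.

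Now fix $k$ and an entourage $U$. Using the $\p{1, 2}$-shadowing of $\p{X, G}$, choose $V$ so that every $\p{V, 1}$-pseudo-orbit of $\p{X, G}$ is $\p{U, 2}$-shadowed, and I claim the same $V$ works for $\p{X, G^k}$. Take any $\p{V, 1}$-pseudo-orbit $\sequenceOf{x_n}:{n \in \omega}$ of $\p{X, G^k}$: it lies in $\nondegenerate{G^k} \subseteq \nondegenerate{G}$ by (ii), and since $\p{y, x_{n+1}} \in V$ holds for all $y \in G^k\of{x_n} \supseteq G\of{x_n}$, it holds a fortiori for all $y \in G\of{x_n}$; hence it is a $\p{V, 1}$-pseudo-orbit of $\p{X, G}$. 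Applying the hypothesis yields $z \in \legal{G} = \legal{G^k}$ and a trajectory $\sequenceOf{z_n}:{n \in \omega} \in T_G^+\of{z}$ with $\p{z_n, x_n} \in U$ for all $n$; by (i) this same sequence lies in $T_{G^k}^+\of{z}$, so $z$ witnesses the $\p{U, 2}$-shadowing of $\sequenceOf{x_n}:{n \in \omega}$ in $\p{X, G^k}$.

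The point deserving the most care — though it is brief — is seeing why both inclusions point the favourable way: the $\p{1, \cdot}$ pseudo-orbit quantifier is universal (so shrinking $G^k\of{x_n}$ to $G\of{x_n}$ preserves a pseudo-orbit) while the $\p{\cdot, 2}$ shadowing quantifier is existential (so a single $G$-trajectory suffices once it is recognised as a $G^k$-trajectory). This is precisely the configuration that the chain hypothesis makes available, and precisely what fails without it, as Example~\ref{ex:comp-fails} shows for $\p{1, j}$-shadowing in general.
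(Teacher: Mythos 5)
Your proof is correct and follows essentially the same route as the paper: the paper simply observes that $G \subseteq G^k$ together with $\nondegenerate{G^k} \subseteq \nondegenerate{G}$ gives $\nondegenerate{G} = \nondegenerate{G^k}$ and then invokes Lemma~\ref{lem:extension}, whose proof is exactly the quantifier argument you carry out inline (universal on the pseudo-orbit side, existential on the shadowing side). Your observation that only the inclusion $\nondegenerate{G^k} \subseteq \nondegenerate{G}$ is actually used is accurate but does not change the substance.
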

\begin{proof}
Necessity is obvious, so we only provide proof of the forward direction. To this end, suppose $\p{X, G}$ has the $\p{1, 2}$-shadowing property. Let $k \geq 1$. By assumption, 
$G \subseteq G^2 \subseteq G^3 \subseteq \ldots$,  
which implies $\nondegenerate{G} = \nondegenerate{G^k}$. For $G \subseteq G^k$ implies $\nondegenerate{G} \subseteq \nondegenerate{G^k}$, and $\nondegenerate{G^k} \subseteq \nondegenerate{G}$ always holds. By Lemma~\ref{lem:extension}, $\p{X, G^k}$ has the $\p{1, 2}$-shadowing property. 
\end{proof}



\section{Characterisation when $\nondegenerate{G}$ is finite}\label{section:finite}
In this section we consider the case when $\nondegenerate{G}$ is finite.  Of course, when $G$ is finite, $\nondegenerate{G}$ is finite (although the converse does not hold in general). We motivate this section with the following result when $\nondegenerate{G}$ is finite.

\begin{proposition}\label{prop:dom-G-finite}
Let $\p{X, G}$ be a CR-dynamical system. If $\nondegenerate{G}$ is finite, then $\p{X, G}$ has the $\p{1, 1}$-shadowing property, $\p{2, 2}$-shadowing property, and $\p{1, 2}$-shadowing property. 
\end{proposition}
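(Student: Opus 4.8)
The plan is to prove the $\p{1,1}$- and $\p{2,2}$-shadowing properties directly and then read off the $\p{1,2}$-shadowing property from Proposition~\ref{prop:implication}. The guiding idea is that every pseudo-orbit lives in the finite set $\nondegenerate{G}$, so a sufficiently fine entourage both forces the pseudo-orbit to be an honest trajectory and pins down its continuation uniquely, reducing ``shadowing'' to ``the pseudo-orbit shadows itself with error in the diagonal''.

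The first step is to manufacture a single symmetric entourage $W \in \collection{U}$ with two separation features, all of which are available because $\nondegenerate{G}$ is finite, $X$ is Hausdorff (so $\Intersection \collection{U} = \Delta_X$), and each vertical section $G\of{p}$ is closed. Concretely: (a) whenever $p, q \in \nondegenerate{G}$ and $\p{p, q} \in W$, then $p = q$; and (b) whenever $p, q \in \nondegenerate{G}$ with $q \notin G\of{p}$, no $y \in G\of{p}$ satisfies $\p{y, q} \in W$. For (a), for each ordered pair of distinct non-degenerate points choose an entourage omitting that pair and intersect over the finitely many pairs; for (b), for each offending pair separate the point $q$ from the closed set $G\of{p}$ by an entourage, and again take a finite intersection. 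Replacing the result by its intersection with its inverse makes $W$ symmetric without destroying (a) or (b).

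The second step observes that, for any $\p{W, 2}$-pseudo-orbit $\sequenceOf{x_n}:{n \in \omega}$, we in fact have $x_{n+1} \in G\of{x_n}$ for every $n$: otherwise $x_n, x_{n+1} \in \nondegenerate{G}$ with $x_{n+1} \notin G\of{x_n}$, and (b) contradicts the existence of some $y \in G\of{x_n}$ with $\p{y, x_{n+1}} \in W$. Hence $\sequenceOf{x_n}:{n \in \omega} \in \mahavier{\infinity}{G}$, so $x_0 \in \legal{G}$ and this very sequence lies in $T_G^+\of{x_0}$. Taking the shadowing point to be $x_0$ and the trajectory to be $\sequenceOf{x_n}:{n \in \omega}$ itself gives $\p{x_n, x_n} \in \Delta_X \subseteq U$ for every entourage $U$, which establishes the $\p{2,2}$-shadowing property (with $V = W$, uniformly in $U$).

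For the $\p{1,1}$-shadowing property we upgrade this to show $x_0$ has a \emph{unique} trajectory, using the fact that every point of an infinite trajectory is non-degenerate since it has a successor. Given a $\p{W, 1}$-pseudo-orbit $\sequenceOf{x_n}:{n \in \omega}$ (which is in particular a $\p{W,2}$-pseudo-orbit, hence honest as above) and any $\sequenceOf{y_n}:{n \in \omega} \in T_G^+\of{x_0}$, we argue by induction that $y_n = x_n$: assuming $y_n = x_n$, we have $y_{n+1} \in G\of{x_n}$, so the $\p{W,1}$ condition yields $\p{y_{n+1}, x_{n+1}} \in W$, and since both $y_{n+1}$ and $x_{n+1}$ are non-degenerate, (a) forces $y_{n+1} = x_{n+1}$. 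Thus every trajectory of $x_0$ equals $\sequenceOf{x_n}:{n \in \omega}$, so $x_0$ $\p{U, 1}$-shadows it for every $U$, giving the $\p{1,1}$-shadowing property; the $\p{1,2}$-shadowing property then follows from either case via Proposition~\ref{prop:implication}. The one point demanding care is the construction of $W$: it is the uniform-space replacement for choosing $\delta$ smaller than both the least distance between distinct non-degenerate points and the least distance from a non-degenerate point to a section not containing it, and it is where Hausdorffness, closedness of the sections, and finiteness of $\nondegenerate{G}$ are all used simultaneously.
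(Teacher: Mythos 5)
Your proposal is correct and follows essentially the same route as the paper: a finite intersection of entourages that separates distinct non-degenerate points and separates each non-degenerate point from the sections $G\of{p}$ not containing it, which forces every sufficiently fine pseudo-orbit to be an honest trajectory that shadows itself. The only cosmetic difference is that your uniqueness argument for the $\p{1,1}$ case runs an induction along an arbitrary trajectory of $x_0$ (using that trajectory points are non-degenerate), whereas the paper concludes $G\of{x_n} = \set{x_{n+1}}$ directly; both give the same conclusion.
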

\begin{proof}
Suppose $U \in \collection{U}$. For each $x, y \in \nondegenerate{G}$ with $x \notin G\of{y}$, there exists entourage $W_{xy} \in \collection{U}$ such that $\p{x, z} \notin W_{xy}$ for each $z \in G\of{y}$, which follows by regularity of $X$. If $x, y \in \nondegenerate{G}$ with $x \in G\of{y}$, let $W_{xy} = X \times X$. Let $W = \Intersection_{x, y \in \nondegenerate{G}} W_{xy}$, which is an entourage ($\nondegenerate{G}$ is finite, so finite intersection). Since $X$ is Hausdorff, $\Intersection \collection{U} = \Delta_X$. It follows that there exists symmetric entourage $V \subseteq U \intersect W$ such that if $x \in \nondegenerate{G}$, $V\of{x} \intersect \nondegenerate{G} = \set{x}$ ($\nondegenerate{G}$ is finite).  

Let $i \in \set{1, 2}$. Now, suppose $\sequenceOf{x_n}:{n \in \omega}$ is a $\p{V, i}$-pseudo-orbit. Then $\set{x_{n+1}} \times G\of{x_n}$ meets $V$ for each $n \in \omega$. By choice of $V$, it follows $x_{n+1} \in G\of{x_n}$ for each $n \in \omega$. Hence, $\sequenceOf{x_n}:{n \in \omega} \in T_G^+\of{x_0}$. It follows that $x_0$ $\p{U, 2}$-shadows $\sequenceOf{x_n}:{n \in \omega}$, and thus $\p{X, G}$ has the $\p{1, 2}$ and $\p{2, 2}$-shadowing properties. In the case $i = 1$, it follows by choice of $V$ that $\sequenceOf{x_n}:{n \in \omega}$ is the only trajectory of $x_0$. For we know $\set{x_{n+1}} \times G\of{x_n} \subseteq V$ for each $n \in \omega$. Since $x_{n+1} \in \nondegenerate{G}$, $G\of{x_n} \subseteq V\of{x_{n+1}} \intersect \nondegenerate{G} = \set{x_{n+1}}$, for each $n \in \omega$. Consequently, $G\of{x_n} = \set{x_{n+1}}$ for each $n \in \omega$, and so our claim easily follows. Thus, $\p{X, G}$ has the $\p{1, 1}$-shadowing property. 
\end{proof}

\begin{note}
In our proof $\p{X, G}$ has the $\p{1, 1}$-shadowing property when $\nondegenerate{G}$ is finite, we show that $x_0$ has only one trajectory. While this is valid for the proof, it inherently assumes $\sequenceOf{x_n}:{n\in\omega}$ is a $\p{V, 1}$-pseudo-orbit, using our chosen entourage $V$ in the proof. In practice, it is possible to have a CR-dynamical system $\p{X, G}$ with the $\p{1, 1}$-shadowing property and $\nondegenerate{G}$ finite, such that each legal point has more than one trajectory.  In such cases, there must exist no $\p{V, 1}$-pseudo-orbits, thus ensuring $\p{X, G}$ has the $\p{1, 1}$-shadowing property vacuously.  This is the case with our following example. 
\end{note}

\begin{example}
We construct an example of a CR-dynamical system $\p{X, G}$ with $G$ finite which does not have the $\p{2, 1}$-shadowing property. Consider $X = \set{0, 1}$ endowed with the discrete topology and $G = X \times X$. Then, $U = \Delta_X$ is an entourage of $X$; indeed, $\sequenceOf{x_n}:{n \in \omega}$ defined by $x_n = 0$ for each $n \in \omega$ is a $\p{V, 2}$-pseudo-orbit for every entourage $V$, yet is not $\p{U, 1}$-shadowed by $0$ or $1$. Hence, $\p{X, G}$ does not have the $\p{2, 1}$-shadowing property.
\end{example}

\begin{proposition}\label{prop:dom-G-finite-21}
Let $\p{X, G}$ be a CR-dynamical system with $\nondegenerate{G}$ finite. Then, $\p{X, G}$ has the $\p{2, 1}$-shadowing property if, and only if, each $x \in \legal{G}$ has exactly one trajectory. 
\end{proposition}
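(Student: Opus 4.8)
The plan is to prove the two directions separately, handling the ``unique trajectory $\Rightarrow$ shadowing'' implication by reduction to the $\p{2,2}$-shadowing property, and the converse by contraposition.

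For the backward direction, suppose every $x \in \legal{G}$ has exactly one trajectory. Since $\nondegenerate{G}$ is finite, Proposition~\ref{prop:dom-G-finite} already gives that $\p{X, G}$ has the $\p{2, 2}$-shadowing property. The observation I would exploit is that, when each legal point has a unique trajectory, $\p{U, 2}$-shadowing and $\p{U, 1}$-shadowing coincide, because the existential and universal quantifiers over $T_G^+\of{y}$ agree when that set is a singleton. So, fixing an entourage $U$ and taking the $V$ produced by the $\p{2, 2}$-shadowing property, any $\p{V, 2}$-pseudo-orbit is $\p{U, 2}$-shadowed by some legal point $y$; as $y$'s only trajectory is the shadowing one, $y$ in fact $\p{U, 1}$-shadows it. This yields the $\p{2, 1}$-shadowing property.

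For the forward direction I would argue the contrapositive: assume some legal point $x$ has two distinct trajectories $\sequenceOf{a_n}:{n \in \omega}$ and $\sequenceOf{b_n}:{n \in \omega}$ with $a_0 = b_0 = x$, and let $m \geq 1$ be least with $a_m \neq b_m$. Two facts to record first: every point occurring in a trajectory is non-degenerate (it has a successor in $G$), and every actual trajectory is a $\p{V, 2}$-pseudo-orbit for \emph{every} entourage $V$ (take $y = x_{n+1} \in G\of{x_n}$, so $\p{y, x_{n+1}} \in \Delta_X \subseteq V$). Using finiteness of $\nondegenerate{G}$ together with $\Intersection \collection{U} = \Delta_X$, I would then choose a symmetric entourage $U$ separating distinct non-degenerate points, exactly as in the proof of Proposition~\ref{prop:dom-G-finite}, i.e.\ with $U\of{p} \intersect \nondegenerate{G} = \set{p}$ for each $p \in \nondegenerate{G}$. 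I claim no $V$ works for this $U$: the trajectory $\sequenceOf{a_n}:{n \in \omega}$ is a $\p{V, 2}$-pseudo-orbit, and if some legal $y$ were to $\p{U, 1}$-shadow it, then $\p{y, x} \in U$ with both $y$ (legal, hence non-degenerate) and $x$ non-degenerate forces $y = x$; applying the $\p{U, 1}$ condition to the \emph{other} trajectory $\sequenceOf{b_n}:{n \in \omega}$ of $x$ gives $\p{b_m, a_m} \in U$, and separation forces $a_m = b_m$, contradicting the choice of $m$.

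The main obstacle, and the step to state precisely, is the separation argument: one must package finiteness of $\nondegenerate{G}$ and Hausdorffness into a single entourage $U$ that simultaneously pins the shadowing point down to $x$ at time $0$ and detects the divergence of the two trajectories at time $m$. Everything else is bookkeeping with the quantifier structure of the $\p{2, j}$-shadowing definitions. I would also flag explicitly that the failing pseudo-orbit $\sequenceOf{a_n}:{n \in \omega}$ is the \emph{same} for every $V$, which is what makes the negation of the $\p{2, 1}$-shadowing property (``there is a $U$ such that for all $V$ there is an unshadowed $\p{V,2}$-pseudo-orbit'') come out correctly.
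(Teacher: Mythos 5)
Your proof is correct, and both directions ultimately rest on the same device as the paper's: a symmetric entourage, obtained from finiteness of $\nondegenerate{G}$ and $\Intersection \collection{U} = \Delta_X$, isolating each non-degenerate point from every other non-degenerate point. The differences are organizational but real. For the backward direction, the paper re-runs the entourage construction of Proposition~\ref{prop:dom-G-finite} to show that every $\p{V, 2}$-pseudo-orbit is literally a trajectory of its initial point and then invokes uniqueness; you instead treat Proposition~\ref{prop:dom-G-finite} as a black box yielding the $\p{2, 2}$-shadowing property and observe that, when every legal point has a unique trajectory, the universal and existential quantifiers over $T_G^+\of{y}$ collapse, so $\p{U, 2}$-shadowing upgrades to $\p{U, 1}$-shadowing for free. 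That upgrade is a completely general fact, independent of finiteness, so your decomposition is the more modular of the two. For the forward direction you argue the contrapositive where the paper argues directly, but the content is identical: an actual trajectory is, as you note, a $\p{V, 2}$-pseudo-orbit for every $V$; the separating entourage pins any $\p{U, 1}$-shadowing point down to the initial point $x$ (using that legal points and all points appearing in trajectories are non-degenerate); and applying the universal quantifier to a second trajectory of $x$ contradicts the separation at the first index of divergence. Your explicit remark that the offending pseudo-orbit is the same for every $V$ correctly matches the quantifier structure of the negated shadowing property.
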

\begin{proof}
Suppose $\p{X, G}$ has the $\p{2, 1}$-shadowing property. For each $x, y \in \nondegenerate{G}$ with $x \notin G\of{y}$, there exists entourage $W_{xy} \in \collection{U}$ such that $\p{x, z} \notin W_{xy}$ for each $z \in G\of{y}$, which follows by regularity of $X$. If $x, y \in \nondegenerate{G}$ with $x \in G\of{y}$, let $W_{xy} = X \times X$. Let $W = \Intersection_{x, y \in \nondegenerate{G}} W_{xy}$, which is an entourage. Since $X$ is Hausdorff, $\Intersection \collection{U} = \Delta_X$. It follows that there exists symmetric entourage $U \subseteq W$ such that if $x \in \nondegenerate{G}$, $U\of{x} \intersect \nondegenerate{G} = \set{x}$. Let $V \in \collection{U}$ be arbitrary. 

Now, suppose  $x_0 \in \legal{G}$. Since $x_0$ is legal, there exists $\sequenceOf{x_n}:{n \in \omega} \in T_G^+\of{x_0}$.  Observe $\sequenceOf{x_n}:{n \in \omega}$ is a $\p{V, 2}$-pseudo-orbit. By choice of $U$, only $x_0$ $\p{U, 2}$-shadows $\sequenceOf{x_n}:{n \in \omega}$, and thus $x_0$ must $\p{U, 1}$-shadow $\sequenceOf{x_n}:{n \in \omega}$. However, again by choice of $U$, it follows $\sequenceOf{x_n}:{n \in \omega}$ is the only trajectory of $x_0$, and we are done. 

Conversely, suppose each $x \in \legal{G}$ has exactly one trajectory. Let $U \in \collection{U}$. For each $x, y \in \nondegenerate{G}$ with $x \notin G\of{y}$, there exists entourage $W_{xy} \in \collection{U}$ such that $\p{x, z} \notin W_{xy}$ for each $z \in G\of{y}$, which follows by regularity of $X$. If $x, y \in \nondegenerate{G}$ with $x \in G\of{y}$, let $W_{xy} = X \times X$. Let $W = \Intersection_{x, y \in \nondegenerate{G}} W_{xy}$, which is an entourage. Since $X$ is Hausdorff, $\Intersection \collection{U} = \Delta_X$. It follows that there exists symmetric entourage $V \subseteq U \intersect W$ such that if $x \in \nondegenerate{G}$, $V\of{x} \intersect \nondegenerate{G} = \set{x}$. 

Now, suppose $\sequenceOf{x_n}:{n \in \omega}$ is a $\p{V, 2}$-pseudo-orbit. Then $\set{x_{n+1}} \times G\of{x_n}$ meets $V$ for each $n \in \omega$. By choice of $V$, it follows $x_{n+1} \in G\of{x_n}$ for each $n \in \omega$. Hence, $\sequenceOf{x_n}:{n \in \omega} \in T_G^+\of{x_0}$. By assumption, this is the unique trajectory of $x_0$. Since $V \subseteq U$, it follows $x_0$ $\p{U, 1}$-shadows $\sequenceOf{x_n}:{n \in \omega}$. Thus, $\p{X, G}$ has the $\p{2, 1}$-shadowing property. 
\end{proof}

\begin{figure}[h]
\[
\begin{tikzpicture}[
       decoration = {markings,
                     mark=at position .5 with {\arrow{Stealth[length=2mm]}}},
       dot/.style = {circle, fill, inner sep=0pt, node contents={},
                     label=#1},
       dots/.style = {circle, fill, inner sep=.75pt, node contents={},
                     label=#1},
every edge/.style = {draw, postaction=decorate}
                        ]

\draw (0, 0) -- (5, 0);
\draw (5, 0) -- (5, 5);
\draw (5, 5) -- (0, 5);
\draw (0, 5) -- (0, 0);

\node[blue] (a) at (0, 0) [dots];
\node[blue] (a) at (2.5, 2.5) [dots];
\node[blue] (a) at (5, 0) [dots]; 
\node[blue] (a) at (5, 2.5) [dots];

\foreach \n in {3,...,100} {
    \pgfmathsetmacro{\xn}{5 - 5/\n} 
    \pgfmathsetmacro{\yn}{ifthenelse(mod(\n,2)==0, 2.5, 0)} 
    \node[blue] at (\xn, \yn) [dots];
}

\node (e1) at (2.5, -.5) {{$\color{red}\text{$G = \set{\p{0, 0}, \p{\Fraction 1 / 2, \Fraction 1 / 2}, \p{1, \Fraction 1 / 2}, \p{1, 0}} \union A$}$}};

\end{tikzpicture}
\]
\caption{The relation $G$ in Example~\ref{ex:dom-G-finite-assumption-needed-for-21}}\label{fig:dom-G-finite-assumption-needed-for-21}
\end{figure}

\begin{example}\label{ex:dom-G-finite-assumption-needed-for-21}
We show that Proposition~\ref{prop:dom-G-finite-21} does not hold in general when we remove the assumption $\nondegenerate{G}$ is finite. Take $X = [0, 1]$ and $\Delta_X$. Then, each $x \in X$ has exactly one trajectory, but it follows from Theorem~\ref{thm:id-shadowing-iff-totally-disconnected} and Proposition~\ref{prop:topological-relation-correspondence} that $\p{X, \Delta_X}$ does not have the $\p{2, 1}$-shadowing property. For the other direction, one considers $X = [0, 1]$ and 
\[
G = \set{\p{0, 0}, \p{\Fraction 1 / 2, \Fraction 1 / 2}, \p{1, \Fraction 1 / 2}, \p{1, 0}} \union A, 
\]
as approximated in Figure~\ref{fig:dom-G-finite-assumption-needed-for-21}, where
\[
A = \setOf{\p{1 - \Fraction 1 / n, a_n}}:{n \geq 3 \text{ and } a_n = \begin{cases}
\Fraction 1 / 2 &
\text{if $n$ is even;} \\
0 &
\text{if $n$ is odd.}
\end{cases}}.
\]
Now, suppose $\epsilon > 0$. Let $\delta \in \p{0, \min\set{\epsilon, \Fraction 1 / {7}}}$ be arbitrary. We have $\nondegenerate{G} = \set{0, \Fraction 1 / 2, 1} \union \setOf{1 - \Fraction 1 / n}:{n \geq 3}$, and that $1$ has two trajectories. Suppose $\sequenceOf{x_k}:{k \in \omega}$ is a $\p{\delta, 2}$-pseudo-orbit. Then, for each $k \in \omega$, there exists $y_{k+1} \in G\of{x_k}$ such that $d\of{y_{k+1}, x_{k+1}} \leq \delta$. Observe that if $x_k \in \set{0, \Fraction 1 / 2, 1}$ for some $k \in \omega$, then $x_{k+1} = y_{k+1} \in \set{0, \Fraction 1 / 2}$. Furthermore, if $x_k = 1 - \Fraction 1 / n$ for some $n \geq 3$, then $x_{k+1} = y_{k+1} \in \set{0, \Fraction 1 /2}$. If $x_0 \neq 1$, it easily follows $x_0$ $\p{\epsilon, 1}$-shadows our sequence. If $x_0 = 1$, choose $n \in \omega$ such that $d\of{1, 1 - \Fraction 1 / n} < \epsilon$ and $n$ even if $y_1 = \Fraction 1 / 2$, and $n$ odd if $y_1 = 0$. Indeed, it follows $\p{X, G}$ has the $\p{2, 1}$-shadowing property. 
\end{example}


\begin{figure}[h]
\[
 \begin{tikzpicture}[
       decoration = {markings,
                     mark=at position .5 with {\arrow{Stealth[length=2mm]}}},
       dot/.style = {circle, fill, inner sep=0pt, node contents={},
                     label=#1},
       dots/.style = {circle, fill, inner sep=1.5pt, node contents={},
                     label=#1},
every edge/.style = {draw, postaction=decorate}
                        ]

\node[blue] (b1) at (5, 0) [dot];
\node (c1) at (5, 5) [dot]; 
\node (d1) at (0, 5) [dot];
\node[blue] (f1) at (5, 2.5) [dot]; 
\node (e1) at (2.5, -.5) {{$\color{red}\text{$G = \set{\p{0, 0}, \p{\Fraction 1 / 2, \Fraction 1 / 2}} \union \p{\set{1} \times [0, \Fraction 1 / 2]}$}$}};

\draw (0, 0) -- (5, 0);
\draw[very thick, blue] (5, 0) -- (5, 2.5);
\draw (5, 2.5) -- (5, 5);
\draw (5, 5) -- (0, 5);
\draw (0, 5) -- (0, 0);

\node[blue] (a1) at (0, 0) [dots];
\node[blue] (g1) at (2.5, 2.5) [dots];
 \end{tikzpicture}
\]
\caption{The relation $G$ in Example~\ref{ex:domain-finite}}\label{fig:domain-finite}
\end{figure}

\begin{example}\label{ex:domain-finite}
Consider the CR-dynamical system $\p{X, G}$, where $X = [0, 1]$ and $G$ is as shown in Figure~\ref{fig:domain-finite}. Since $\nondegenerate{G}$ is finite, $\p{X, G}$ has the $\p{1, 1}$-shadowing property, $\p{2, 2}$-shadowing property and $\p{1, 2}$-shadowing propety. However, $1$ has two trajectories, being $\sequence{1, 0, 0, \ldots}$ and $\sequence{1, \Fraction 1 / 2, \Fraction 1 / 2, \ldots}$, which implies $\p{X, G}$ does not have the $\p{2, 1}$-shadowing property. 
\end{example}

\section{When $G$ contains the diagonal}\label{section:diagonal}

In this section we generalise Theorem~\ref{thm:id-shadowing-iff-totally-disconnected} and Theorem~\ref{thm:periodic-shadowing-iff-totally-disconnected} to CR-dynamical systems. In generalising Theorem~\ref{thm:id-shadowing-iff-totally-disconnected} we consider the case when $G$ contains $\Delta_X$, where $\p{X, G}$ is a CR-dynamical system. For Theorem~\ref{thm:periodic-shadowing-iff-totally-disconnected}, we consider the case when $G^n$ contains $\Delta_X$ for some positive integer $n$. We start by showing it is not necessary to consider the case when $G^n = \Delta_X$ for some positive integer, for this implies $G$ is the graph of a continuous self-map on $X$. 

\begin{proposition}\label{prop:G^n=diagonal-implies-G-single-valued}
Let $\p{X, G}$ be a CR-dynamical system such that $G^n = \Delta_X$ for some positive integer $n$. Then, there exists a continuous self-map $f$ on $X$ such that $\Graph{f} = G$. 
\end{proposition}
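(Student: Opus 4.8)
The plan is to reduce the statement to Theorem~\ref{thm:motivation}. Since $G$ is closed by hypothesis, it suffices to show that $G$ is the graph of a single-valued, everywhere-defined function $f\colon X \to X$; continuity of $f$ then follows immediately from Theorem~\ref{thm:motivation}. Concretely, I must prove that $G\of{x}$ is a singleton for every $x \in X$, after which I set $f\of{x}$ to be its unique element and obtain $\Graph{f} = G$.

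First I would unpack the hypothesis. Reading $G^n = \Delta_X$ pointwise gives $G^n\of{x} = \set{x}$ for every $x \in X$. In particular each point admits a closed $G$-path of length $n$ returning to itself, so by taking prefixes every point admits $G$-paths of every length $\leq n$; hence $G\of{x} \neq \emptySet$ (so $\nondegenerate{G} = X$) and $G^{n-1}\of{y} \neq \emptySet$ for all $y \in X$. The key algebraic observation is the decomposition $G^n\of{x} = \Union_{y \in G\of{x}} G^{n-1}\of{y}$. Since the left-hand side equals the singleton $\set{x}$ and every term on the right is non-empty, each term must equal $\set{x}$; that is, $G^{n-1}\of{y} = \set{x}$ whenever $y \in G\of{x}$.

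From this I would extract a \emph{unique-predecessor} property: if $y \in G\of{x} \intersect G\of{x'}$, then $\set{x} = G^{n-1}\of{y} = \set{x'}$, forcing $x = x'$, so each point has at most one $G$-preimage. To convert this into single-valuedness of $G$ I would apply the identical argument to $G^{-1}$: one checks $\p{G^{-1}}^n = \p{G^n}^{-1} = \Delta_X$ (reversing paths), and $\p{X, G^{-1}}$ is again a CR-dynamical system, so $G^{-1}$ also enjoys the unique-predecessor property. Translating that statement back through the definition of $G^{-1}$ says precisely that $y, y' \in G\of{x}$ implies $y = y'$, i.e.\ $G$ is single-valued. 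Combined with $G\of{x} \neq \emptySet$, this yields $\card{G\of{x}} = 1$ for all $x$, completing the reduction.

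The main obstacle I anticipate is not the algebra but the careful bookkeeping around non-emptiness and direction. I must be sure that $G^{n-1}\of{y}$ is non-empty for \emph{every} $y \in G\of{x}$ (not merely for some such $y$), which is exactly why I first establish that every point is legal from $G^n\of{y} = \set{y}$; and I must translate the unique-predecessor property of $G^{-1}$ into single-valuedness of $G$ without conflating the two coordinates. Finally, the degenerate case $n = 1$ (where $G^{n-1} = G^0 = \Delta_X$ and $G = \Delta_X$ is already the graph of $\id{X}$) should be noted separately, as there $G^{n-1}\of{y} = \set{y}$ and the argument collapses to a triviality.
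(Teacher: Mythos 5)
Your proof is correct, and it diverges from the paper's in its finishing move. Both arguments rest on the same core observation: from $G^n\of{x} = \set{x}$, non-emptiness of the relevant iterates, and $y \in G\of{x}$, one closes the loop to get $G^{n-1}\of{y} = \set{x}$. The paper then composes once more around the cycle: if $y \neq z \in G^k\of{x}$ for some $k$, then $x \in G^{n-k}\of{y}$, hence $z \in G^k\of{G^{n-k}\of{y}} = G^n\of{y} = \set{y}$, a contradiction; this shows in one stroke that every $G^k\of{x}$ is a singleton, i.e.\ that every point has a unique trajectory, and in particular that $G\of{x}$ is a singleton. You instead stop at the unique-predecessor property of $G$ and obtain single-valuedness by dualizing: $\p{G^{-1}}^n = \p{G^n}^{-1} = \Delta_X$, so the same lemma applied to $\p{X, G^{-1}}$ (which is indeed again a CR-dynamical system, as the paper notes) translates back into ``$y, y' \in G\of{x}$ implies $y = y'$.'' Your bookkeeping on non-emptiness (every point is legal, so $G^{n-1}\of{y} \neq \emptySet$ for \emph{every} $y \in G\of{x}$) and on the direction of the $G^{-1}$ translation is accurate, and your handling of the $n = 1$ case is fine. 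What the dualization buys is a symmetric packaging in which one lemma does double duty; what it costs is an extra step and a weaker intermediate conclusion than the paper's unique-trajectory claim. Both proofs conclude identically by invoking Theorem~\ref{thm:motivation} to get continuity from closedness of $\Graph{f} = G$.
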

\begin{proof}
We claim each point in $X$ has exactly one trajectory in $\p{X, G}$. Clearly, $\legal{G} = X$, since $G^n = \Delta_X$. To derive a contradiction, suppose there exists $x \in X$ with more than one trajectory. Now, $G^n\of{x} = \set{x}$. Since $x$ has more than one trajectory, $n > 1$. In particular, there must exist $y \neq z \in G^k\of{x}$ for some $k \in [n-1]$. Notice $x \in G^{n - k}\of{y}$, implying $z \in G^n\of{y}$. But $G^n\of{y} = \set{y}$, 
 implying $y = z$, a contradiction.  Hence, our claim follows. 

By our claim and the fact $\legal{G} = X$, it follows $G\of{x}$ is a singleton for each $x \in X$. Define $f : X \to X$ by $f\of{x} = y$, where $G\of{x} = \set{y}$, for each $x \in X$. Observe $\Graph{f} = G$. By Theorem~\ref{thm:motivation}, $f$ is a continuous self-map on $X$, and we are done. 
\end{proof}

\begin{theorem}\label{thm:diagonal-totally-disconnected}
Suppose $\p{X, G}$ is a CR-dynamical system with $\Delta_X \subseteq G$. Then, 
\begin{enumerate}
    \item if $\p{X, G}$ has the $\p{2, 1}$-shadowing property, then $X$ is totally disconnected; 
     \item if $X$ is totally disconnected, then $\p{X, G}$ has the $\p{1, 2}$-shadowing property.
\end{enumerate}
\end{theorem}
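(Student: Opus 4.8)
The plan is to exploit the single structural consequence of the hypothesis $\Delta_X \subseteq G$: every point $x \in X$ satisfies $\p{x, x} \in G$, so every point is non-degenerate and the constant sequence $\sequence{x, x, x, \ldots}$ is a trajectory of $x$. In particular $\nondegenerate{G} = X$, $\legal{G} = X$, and each point carries a constant trajectory. I treat the two implications separately.

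For part $\p{1}$, the plan is to reduce to the identity map and invoke Theorem~\ref{thm:id-shadowing-iff-totally-disconnected}: I will show that if $\p{X, G}$ has the $\p{2, 1}$-shadowing property, then $\p{X, \id{X}}$ has the shadowing property, whence $X$ is totally disconnected. Given an entourage $U$, I use $\p{2,1}$-shadowing to obtain an entourage $V$ witnessing it. The key observation is that any $V$-pseudo-orbit $\sequenceOf{x_n}:{n \in \omega}$ of $\p{X, \id{X}}$, i.e.\ a sequence with $\p{x_n, x_{n+1}} \in V$, is automatically a $\p{V, 2}$-pseudo-orbit of $\p{X, G}$: it lies in $\nondegenerate{G} = X$, and $x_n \in G\of{x_n}$ serves as the required witness $w \in G\of{x_n}$ with $\p{w, x_{n+1}} \in V$. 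Thus it is $\p{U, 1}$-shadowed by some legal point $p$; applying the $\p{U, 1}$-shadowing condition to the constant trajectory $\sequence{p, p, \ldots} \in T_G^+\of{p}$ gives $\p{p, x_n} \in U$ for all $n$, which is exactly the statement that $p$ $U$-shadows $\sequenceOf{x_n}:{n \in \omega}$ under $\id{X}$.

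For part $\p{2}$, the plan uses that a totally disconnected compact Hausdorff space is zero-dimensional, i.e.\ admits a basis of clopen sets. Given an entourage $U$, I first choose, for each $x \in X$, a clopen set $C_x \ni x$ with $C_x \times C_x \subseteq U$ (shrinking an open square around $\p{x,x}$ lying inside $U$ to a clopen neighbourhood), extract a finite subcover by compactness, and disjointify it into a finite clopen partition $\set{P_1, \ldots, P_k}$ with $P_j \times P_j \subseteq U$ for each $j$. Then $V := \Union_{j=1}^k \p{P_j \times P_j}$ is an open neighbourhood of $\Delta_X$, hence an entourage, and $\p{a, b} \in V$ holds precisely when $a, b$ lie in a common $P_j$. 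Now if $\sequenceOf{x_n}:{n \in \omega}$ is a $\p{V, 1}$-pseudo-orbit, then taking $w = x_n \in G\of{x_n}$ in the defining condition yields $\p{x_n, x_{n+1}} \in V$, so every $x_n$ lies in the single piece $P_j$ containing $x_0$. Taking $p = x_0$ with its constant trajectory then gives $\p{p, x_n} \in P_j \times P_j \subseteq U$ for all $n$, so $p$ $\p{U, 2}$-shadows the pseudo-orbit, establishing the $\p{1, 2}$-shadowing property.

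I expect the main obstacle to lie in part $\p{2}$: the careful construction of the finite clopen partition subordinate to $U$ and the verification that the resulting $V$ is genuinely an entourage of the (unique) uniformity on the compact Hausdorff space $X$. The reduction in part $\p{1}$ is comparatively routine once one notices that the constant trajectory of the shadowing point collapses the $\p{U,1}$-condition to the identity-shadowing condition; the only points needing care there are that every point is legal and non-degenerate, both guaranteed by $\Delta_X \subseteq G$.
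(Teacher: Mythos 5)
Your proof is correct and follows essentially the same route as the paper: both parts reduce to the identity map via the constant trajectories supplied by $\Delta_X \subseteq G$. The paper packages part $\p{1}$ as Lemma~\ref{lem:contraction} applied to $\Delta_X \subseteq G$ and part $\p{2}$ as Theorem~\ref{thm:id-shadowing-iff-totally-disconnected} followed by Lemma~\ref{lem:extension}, whereas you inline these steps (and, in part $\p{2}$, re-derive the clopen-partition argument underlying the classical theorem rather than citing it); the mathematical content is the same.
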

\begin{proof}
Suppose $\p{X, G}$ has the $\p{2, 1}$-shadowing property.  By Lemma~\ref{lem:contraction}, $\p{X, \Delta_X}$ has the $\p{2, 1}$-shadowing property, implying $\p{X, \text{id}_X}$ has the shadowing property (where $\text{id}_X$ is the identity map on $X$).  Thus, $X$ is totally disconnected by Theorem~\ref{thm:id-shadowing-iff-totally-disconnected}, establishing $\p{1}$.

Suppose $X$ is totally disconnected.  By Theorem~\ref{thm:id-shadowing-iff-totally-disconnected}, $\p{X, \text{id}_X}$ has the shadowing property, implying $\p{X, \Delta_X}$ has the $\p{1, 2}$-shadowing property.  By Lemma~\ref{lem:extension}, it follows $\p{X, G}$ has the $\p{1, 2}$-shadowing property, yielding $\p{2}$. 
\end{proof}

\begin{proposition}\label{prop:diagonal-plus-horizontal}
Let $\p{X, G}$ be a CR-dynamical system such that $G = \Delta_X \union \p{X \times \set{x}}$ for some $x \in X$. Then $\p{X, G}$ has the $\p{1, 1}$-shadowing property. 
\end{proposition}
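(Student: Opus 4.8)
The plan is to exploit the rigid structure of $G = \Delta_X \union \p{X \times \set{x}}$ and to use the single point $x$ as a universal shadowing point. First I would compute $G\of{a} = \set{a, x}$ for every $a \in X$ (with $G\of{x} = \set{x}$), so that $\nondegenerate{G} = X$, every point is legal, and---crucially---$x$ admits exactly one trajectory, namely $\sequence{x, x, x, \ldots}$. Because $x$ has a unique trajectory, verifying that $x$ $\p{U, 1}$-shadows a sequence reduces to checking that this one trajectory is $U$-close to the sequence; this neutralises the ``for all trajectories'' quantifier that makes the $\p{1,1}$-property delicate.

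The key observation is that every $\p{V, 1}$-pseudo-orbit is forced to stay close to the constant sequence at $x$. Given $U \in \collection{U}$, I would use the uniformity axioms to pick a symmetric entourage $V$ with $V^2 \subseteq U$ (note $V \subseteq V^2 \subseteq U$, since $V$ contains the diagonal). Let $\sequenceOf{x_n}:{n \in \omega}$ be any $\p{V, 1}$-pseudo-orbit. Since $x \in G\of{x_n}$ for every $n \in \omega$, the ``all $y \in G\of{x_n}$'' clause applied with $y = x$ yields $\p{x, x_{n+1}} \in V$ for every $n$; that is, $\p{x, x_m} \in V \subseteq U$ for all $m \geq 1$.

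The only remaining term is $x_0$, which is not directly controlled by the pseudo-orbit inequalities. Here I would use a composition argument: taking $y = x_0$ in the $n = 0$ clause gives $\p{x_0, x_1} \in V$, and taking $y = x$ gives $\p{x, x_1} \in V$; since $V$ is symmetric, $\p{x_1, x} \in V$, whence $\p{x_0, x} \in V^2 \subseteq U$, and by symmetry of $V^2$ also $\p{x, x_0} \in U$. Thus $\p{x, x_n} \in U$ for every $n \in \omega$, so the unique trajectory of $x$ $\p{U, 1}$-shadows $\sequenceOf{x_n}:{n \in \omega}$, establishing that $\p{X, G}$ has the $\p{1, 1}$-shadowing property.

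I expect the only genuine subtlety to be the treatment of the initial term $x_0$: the pseudo-orbit condition pins down each $x_{n+1}$ directly but reaches $x_0$ only through the $n = 0$ step, so the refinement $V^2 \subseteq U$ together with the symmetry of $V$ is exactly what closes that gap (and, since the arbitrary entourage $U$ need not be symmetric, it is important that $V^2$ rather than $U$ carries the symmetry). Everything else follows at once from the fact that $x$ belongs to every image set $G\of{x_n}$.
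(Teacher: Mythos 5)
Your proof is correct and follows essentially the same route as the paper's: both choose a symmetric entourage whose square refines the given one, observe that $x \in G\of{x_n}$ forces $\p{x, x_{n+1}}$ into that entourage, and recover the initial term $x_0$ by composing two entourage memberships, using that $x$ has the unique trajectory $\sequence{x, x, \ldots}$. The only cosmetic difference is that the paper runs the composition argument uniformly for all indices, whereas you treat $n \geq 1$ directly and only compose for $n = 0$; your explicit remark that $V^2$ inherits symmetry from $V$ is a nice touch the paper leaves implicit.
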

\begin{proof}
Suppose $V \in \collection{U}$. Then there exists entourage $U$ of $X$ such that $U^2 \subseteq V$. Let $W = \p{U \intersect V} \intersect \p{U \intersect V}^{-1}$.  If $\sequenceOf{x_n}:{n \in \omega}$ is a $\p{W, 1}$-pseudo-orbit, then $G\of{x_n} \times \set{x_{n+1}} \subseteq W$ for each $n \in \omega$. That is, $\set{x, x_n} \times \set{x_{n+1}} \subseteq W$, yielding $\p{x, x_{n+1}}, \p{x_{n+1}, x_n} \in W \subseteq U$ for each $n \in \omega$. Hence, $\p{x, x_n} \in V$ for all $n \in \omega$, implying $x$ $\p{V, 1}$-shadows $\sequenceOf{x_n}:{n \in \omega}$. Thus, $\p{X, G}$ has the $\p{1, 1}$-shadowing property. 
\end{proof}

We now show Theorem~\ref{thm:diagonal-totally-disconnected}, the generalisation of Theorem~\ref{thm:id-shadowing-iff-totally-disconnected}, cannot be improved.

\begin{proposition}\label{prop:id-shadowing-diagonal-totally_disconnected-counterexamples}
Suppose $\p{X, G}$ is a CR-dynamical system with $\Delta_X \subseteq G$ and $j \in \set{1, 2}$. Then, 
\begin{enumerate}
    \item if $\p{X, G}$ has the $\p{2, 2}$-shadowing property, then $X$ need not be totally disconnected; 
    \item if $\p{X, G}$ has the $\p{1, j}$-shadowing property, then $X$ need not be totally disconnected;
    \item if $X$ is totally disconnected, then $\p{X, G}$ need not have the $\p{2, 1}$-shadowing property; 
    \item if $X$ is totally disconnected, then $\p{X, G}$ need not have the $\p{1, 1}$-shadowing property;
    \item if $X$ is totally disconnected, then $\p{X, G}$ need not have the $\p{2, 2}$-shadowing property.  
\end{enumerate}
\end{proposition}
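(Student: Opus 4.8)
The plan is to establish each of the five ``need not'' claims by exhibiting a single counterexample, using the implication diagram of Proposition~\ref{prop:implication} to recycle work: since $(2,1)$-shadowing implies both $(1,1)$- and $(2,2)$-shadowing, any system failing either $(1,1)$ or $(2,2)$ automatically fails $(2,1)$. For $(1)$ and $(2)$ I would work over $X = [0,1]$, which is connected and hence not totally disconnected, always keeping $\Delta_X \subseteq G$. For $(1)$, take $G = X \times X$: every sequence in $X$ is simultaneously a $\p{\delta, 2}$-pseudo-orbit (take $y = x_{n+1} \in G\of{x_n} = X$) and a trajectory, so $x_0$ together with the pseudo-orbit itself witnesses $\p{2,2}$-shadowing, while $X$ is not totally disconnected. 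For $(2)$, the relation $G = \Delta_X \union \p{X \times \set{0}}$ of Example~\ref{ex:diag-const-11} has the $\p{1,1}$-shadowing property by Proposition~\ref{prop:diagonal-plus-horizontal}, hence the $\p{1,2}$-shadowing property by Proposition~\ref{prop:implication}; this settles both $j \in \set{1,2}$, again with $X=[0,1]$ not totally disconnected. (Alternatively $G = X\times X$ serves here too, since for $\delta < \Fraction 1 / 2$ it admits no $\p{\delta,1}$-pseudo-orbit and so has $\p{1,j}$-shadowing vacuously.)

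For $(3)$ I would use the finite discrete space $X = \set{0,1}$ with $G = X \times X$: it is totally disconnected, satisfies $\Delta_X \subseteq G$, and fails the $\p{2,1}$-shadowing property, exactly as already observed for this relation. For $(4)$ I would take the totally disconnected compact metric space $X = \set{0} \union \setOf{\Fraction 1 / n}:{n \geq 1}$ with $G = \Delta_X \union \p{X \times \set{0}} \union \p{\set{0} \times \set{1}}$, so that $G\of{\Fraction 1 / n} = \set{\Fraction 1 / n, 0}$ and $G\of{0} = \set{0,1}$. Here $0$ functions as a hub: every $z \in X$ admits the trajectory $\sequence{z, 0, 1, 1, \ldots} \in T_G^+\of{z}$. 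Fixing $\epsilon = \Fraction 1 / 2$ and an arbitrary $\delta > 0$, the constant sequence $\sequence{\Fraction 1 / k, \Fraction 1 / k, \ldots}$ with $\Fraction 1 / k \leq \delta$ is a genuine $\p{\delta,1}$-pseudo-orbit, yet no point $\p{\epsilon,1}$-shadows it, since each point's hub trajectory reaches $1$ at time $2$, at distance $\geq \Fraction 1 / 2$ from $\Fraction 1 / k$. Thus $\p{1,1}$ fails; note this example also re-establishes $(3)$.

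The remaining claim $(5)$ is the main obstacle: producing a totally disconnected $X$ with $\Delta_X \subseteq G$ that fails $\p{2,2}$-shadowing. The naive mechanism of driving a pseudo-orbit across the space by small diagonal steps, as in Example~\ref{ex:diag-const-11}, is unavailable here, because every totally disconnected compact metric space admits, for each $\epsilon > 0$, a finite partition into clopen sets of diameter $< \epsilon$; taking $\delta$ below the minimum gap between pieces, no sequence of $\delta$-steps can leave a single clopen piece. One is therefore forced to move a pseudo-orbit along genuine edges of $G$ with only $\delta$-slop, and any such construction that branches at a \emph{single} time step is defeated by closedness of $G$: the branch points must accumulate, and their limit point inherits, as a limit of edges, precisely the edges needed to assemble an honest shadowing trajectory. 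I have checked this escape phenomenon on several candidate convergent-sequence relations, and in each the limit point rescues $\p{2,2}$-shadowing.

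Consequently the obstruction to $\p{2,2}$ for $(5)$ must be \emph{asymptotic} rather than localized at one step. My plan is to realise such an obstruction symbolically: take a subshift $\p{\Sigma, \sigma}$, with $\Sigma \subseteq \set{0,1}^\omega$ totally disconnected, that fails the classical shadowing property (for instance a minimal subshift that is not of finite type), and set $G = \Graph{\sigma} \union \Delta_\Sigma$, which automatically satisfies $\Delta_\Sigma \subseteq G$. The crux is then to verify that the ``lazy'' trajectories introduced by adjoining $\Delta_\Sigma$ still cannot shadow the cheating pseudo-orbits inherited from the subshift, i.e.\ that the diagonal does not restore shadowing; this verification is where the real work lies. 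Should the diagonal in fact rescue shadowing, the fallback is to hand-build a Cantor-set relation whose branching recurs at every scale in a globally inconsistent fashion, so that each finite portion of a cheating pseudo-orbit is trackable but no single trajectory tracks all branches simultaneously. Isolating the correct such system, and proving the non-shadowing survives the presence of the diagonal, is the principal difficulty of the proposition.
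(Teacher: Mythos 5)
Parts $\p{1}$--$\p{3}$ of your proposal coincide with the paper's proof: $X = [0,1]$ with $G = X \times X$ for $\p{1}$, the relation of Proposition~\ref{prop:diagonal-plus-horizontal} for $\p{2}$, and the two-point discrete space with $G = X \times X$ via Proposition~\ref{prop:dom-G-finite-21} for $\p{3}$. For $\p{4}$ you use a different but valid witness: the paper takes the Cantor set with $G = \Delta_X \union \p{X \times \set{0}} \union \p{\set{0} \times X}$ and derives a contradiction from $0, 1 \in G^2\of{y}$, whereas your convergent sequence $\set{0} \union \setOf{\Fraction 1 / n}:{n \geq 1}$ with the hub at $0$ and the single escaping edge $\p{0,1}$ is arguably leaner; the relation is closed, the constant sequence at $\Fraction 1/ k$ is a genuine $\p{\delta, 1}$-pseudo-orbit, and the forced trajectory $\sequence{z, 0, 1, 1, \ldots}$ defeats every candidate shadowing point. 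That part is fine.

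The genuine gap is part $\p{5}$, which you do not prove. You correctly diagnose that the obstruction must be asymptotic rather than a single branch point, but your proposed construction (a minimal non-finite-type subshift $\Sigma$ with $G = \Graph{\sigma} \union \Delta_\Sigma$) is left unverified at exactly the decisive step --- whether the lazy trajectories created by adjoining the diagonal can rescue $\p{2,2}$-shadowing --- and you explicitly defer this as ``where the real work lies.'' A claimed counterexample with an unverified non-shadowing property is not a counterexample. The paper resolves $\p{5}$ with precisely the kind of hand-built Cantor-set relation you mention as a fallback: $X$ the Cantor set and $G = \Delta_X \union \setOf{\p{a_n, a_{n+1}}}:{n \in \omega} \union \setOf{\p{b_n, b_{n+1}}}:{n \in \integers}$, where $a_0 = 0$, the $a$-chain increases to $\Fraction 2/9$, and the $b$-chain increases from $\Fraction 2/9$ (as $n \to -\infinity$) up to $\Fraction 1/3$. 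A $\p{\delta, 2}$-pseudo-orbit can hop from the tail of the $a$-chain to a far-negative $b_m$ near the common limit $\Fraction 2/9$ and then march toward $\Fraction 1/3$, but any legal point within $\Fraction 1/{27}$ of $0$ has all its iterates trapped in $[0, \Fraction 2/9] \intersect X$ (the limit point $\Fraction 2/9$ is fixed under $G$, so the diagonal cannot ferry a trajectory across the gap), and the triangle inequality gives the contradiction. Your part $\p{5}$ needs either this construction carried out in full or a completed verification that the diagonal does not restore shadowing in your subshift candidate; as written, the proposition is only four-fifths proved.
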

\begin{proof}
For $\p{1}$, take $X = [0, 1]$ and $G = X \times X$. Suppose $U$ is an entourage. Then, every $\p{U, 2}$-pseudo-orbit is a trajectory, and thus $\p{U, 2}$-shadows itself. It follows $\p{X, G}$ has the $\p{2, 2}$-shadowing property, but indeed $X$ is not totally disconnected. 

For $\p{2}$, take $X = [0, 1]$ and $G = \Delta_X \union \p{X \times \set{0}}$, then apply Proposition~\ref{prop:diagonal-plus-horizontal}. 

For $\p{3}$, take $X = \set{0, 1}$ with discrete topology and $G = X \times X$, then apply Proposition~\ref{prop:dom-G-finite-21} to obtain $\p{X, G}$ does not have the $\p{2, 1}$-shadowing property. 

For $\p{4}$, take $X$ to be the Cantor set and $G = \Delta_X \union \p{X \times \set{0}} \union \p{\set{0} \times X}$. Choose $\epsilon = \Fraction 1 / 2$, and let $\delta > 0$ be arbitrary. There exists $n \in \omega$ such that $\Fraction 4 / {3^n} \leq \delta$. Observe $\sequenceOf{\Fraction 2 / {3^k}}:{k \geq n}$ is a $\p{\delta, 1}$-pseudo-orbit. To derive a contradiction, suppose there exists $y \in X$ which $\p{\epsilon, 1}$-shadows $\sequenceOf{\Fraction 2 / {3^k}}:{k \geq n}$. Notice $0, 1 \in G^2\of{y}$. But this means $d\of{\Fraction 2 / {3^{n+2}}, 0} < \epsilon$ and $d\of{\Fraction 2 / {3^{n+2}}, 1} < \epsilon$. Applying the triangle inequality yields $d\of{0, 1} < 1$, a contradiction. Thus, $\p{X, G}$ does not have the $\p{1, 1}$-shadowing property. 

For $\p{5}$, take $X$ to be the Cantor set, and $G = \Delta_X \union \setOf{\p{a_n, a_{n+1}}}:{n \in \omega} \union \setOf{\p{b_n, b_{n+1}}}:{n \in \integers}$, where $\sequenceOf{a_n}:{n \in \omega}$ and $\sequenceOf{b_n}:{n \in \integers}$ are sequences in $X$ with the properties,
\begin{itemize}
    \item for each $\p{n, m} \in \omega \times \integers$, $a_n < a_{n+1}$ and $b_m < b_{m+1}$;
    \item $a_0 = 0$, $\lim_{n \to +\infinity} a_n = \Fraction 2 / 9 = \lim_{n\to -\infinity} b_n$ and $\lim_{n \to +\infinity} b_n = \Fraction 1 / 3$. 
\end{itemize}
Indeed, $\p{X, G}$ is a CR-dynamical system. Now, let $\epsilon = \Fraction 1 / {27}$, and $\delta > 0$ be arbitrary. There exists $\p{n, m} \in \omega \times \integers$ such that $\sequence{a_0, \ldots, a_n, b_m, b_{m+1}, \ldots}$ is a $\p{\delta, 2}$-pseudo-orbit. However, it is not $\p{\epsilon, 2}$-shadowed by any point in $X$: to derive a contradiction, suppose $y$ $\p{\epsilon, 2}$-shadows $\sequence{a_0, \ldots, a_n, b_m, b_{m+1}, \ldots}$. Then, $d\of{0, y} < \Fraction 1/ {27}$. This implies $G^k\of{y} \subseteq [0, \Fraction 2 / 9] \intersect X$ for all $k \in \omega$. Since $\lim_{k \to +\infinity} b_k = \Fraction 1 / 3$, there exists $\ell \geq m$ such that $d\of{\Fraction 1 /3, b_\ell} < \Fraction 1/ {27}$. There exists $y_\ell \in G^k\of{y}$ for some $k \in \omega$, such that $d\of{b_\ell, y_\ell} < \Fraction 1 / {27}$, by the assumption $y$ $\p{\epsilon, 2}$-shadows our sequence. But then
\[
\Fraction 1 / 3 = d\of{\Fraction 1 / 3, 0} \leq d\of{\Fraction 1 / 3, b_{\ell}} + d\of{b_\ell, y_\ell} + d\of{y_\ell, 0} < \Fraction 1 / {27} + \Fraction 1 / {27} + \Fraction 2 / {9} = \Fraction 8 / {27}, 
\]
a contradiction. 
\end{proof}

We now make an observation on $\p{1}$ in Theorem~\ref{thm:diagonal-totally-disconnected}, which generalises the forward direction in Theorem~\ref{thm:id-shadowing-iff-totally-disconnected}, showing the condition implies $G$ is the graph of a continuous function. Indeed, the same holds when generalising the forward direction of Theorem~\ref{thm:periodic-shadowing-iff-totally-disconnected}. 

\begin{proposition}\label{prop:21-implies-diagonal}
Suppose $\p{X, G}$ is a CR-dynamical system with $\Delta_X \subseteq G$. If $\p{X, G}$ has the $\p{2, 1}$-shadowing property, then $G = \Delta_X$. 
\end{proposition}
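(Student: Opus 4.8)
The plan is to argue by contradiction and exhibit a single explicit pseudo-orbit that no point can $\p{U, 1}$-shadow, for a carefully chosen entourage $U$. Suppose $G \neq \Delta_X$. Since $\Delta_X \subseteq G$, this gives a pair $\p{a, b} \in G$ with $a \neq b$. Note that $\Delta_X \subseteq G$ forces $\legal{G} = X$ and makes every point non-degenerate (each $x$ has the constant trajectory $\sequence{x, x, x, \ldots}$), so there will be no legality or non-degeneracy obstructions to the construction below; in particular $a, b \in \nondegenerate{G}$.

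First I would separate $a$ and $b$ at the level of the uniformity. Because $X$ is compact Hausdorff, its unique compatible uniformity satisfies $\Intersection \collection{U} = \Delta_X$, so from $a \neq b$ we obtain an entourage avoiding $\p{a, b}$. Refining this, I would fix a \emph{symmetric} entourage $U$ whose square $U^2$ still avoids $\p{a, b}$. This guarantees the disjointness $U\of{a} \intersect U\of{b} = \emptySet$: a common point $w$ would give $\p{a, w} \in U$ and $\p{w, b} \in U$, hence $\p{a, b} \in U^2$, a contradiction.

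Feeding this $U$ into the $\p{2, 1}$-shadowing property yields an entourage $V$, and the key device is then the sequence $\sequence{a, b, b, b, \ldots}$. Using $\p{a, b} \in G$ at the first step and $\p{b, b} \in G$ (from $\Delta_X \subseteq G$) at every later step, this is a $\p{V, 2}$-pseudo-orbit (indeed for every $V$). Let $z$ be a point that $\p{U, 1}$-shadows it. Reading the shadowing condition at time $0$ over any trajectory of $z$ gives $\p{z, a} \in U$, so $z \in U\of{a}$ by symmetry. The crucial observation is that, since $\Delta_X \subseteq G$, the constant sequence $\sequence{z, z, z, \ldots}$ lies in $T_G^+\of{z}$; applying the $\p{U, 1}$-condition to \emph{this particular} trajectory at time $1$ forces $\p{z, b} \in U$, so $z \in U\of{b}$. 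This contradicts $U\of{a} \intersect U\of{b} = \emptySet$, so no off-diagonal pair exists and $G = \Delta_X$.

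The only genuinely delicate step is the entourage bookkeeping in the second paragraph, namely arranging a single symmetric $U$ whose square separates $a$ from $b$ so that the two balls $U\of{a}$ and $U\of{b}$ are disjoint; but this is a routine consequence of the uniformity axioms together with $\Intersection \collection{U} = \Delta_X$. Everything after that is forced: the strength of the $\p{2, 1}$-property is precisely that \emph{all} trajectories of the shadowing point must stay close, and the diagonal supplies a constant trajectory that pins $z$ simultaneously near $a$ at time $0$ and near $b$ at time $1$.
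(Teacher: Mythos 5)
Your proof is correct and follows essentially the same strategy as the paper's: both derive a contradiction by feeding a pseudo-orbit of the form $\sequence{x, y, y, \ldots}$ into the $\p{2, 1}$-shadowing property and exploiting the constant trajectory supplied by $\Delta_X \subseteq G$ to pin down the shadowing point. Your version is in fact slightly leaner --- one pseudo-orbit, one shadowing point, and a $U^2$ separation, where the paper uses two pseudo-orbits $\sequence{x, y, y, \ldots}$ and $\sequence{x, z, z, \ldots}$, two shadowing points, and a $U^4$ chain --- but the underlying mechanism is identical.
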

\begin{proof}
Suppose $\p{X, G}$ has the $\p{2, 1}$-shadowing property. To derive a contradiction, suppose $G \neq \Delta_X$. Then, there exists $x \in X$ such that there are $y \neq z \in G\of{x}$. There exists entourage $W$ of $X$ such that $\p{y, z} \notin W$. Let $U \subseteq W$ be a symmetric entourage of $X$ such that $U^4 \subseteq W$. There exists entourage $V$ of $X$ satisfying the $\p{2, 1}$-shadowing property for $U$. Observe $\sequence{x, y, y, \ldots}$ and $\sequence{x, z, z, \ldots}$ are both $\p{V, 2}$-pseudo-orbits. There exists $u, v \in X$ which $\p{U, 1}$-shadow $\sequence{x, y, y, \ldots}$ and $\sequence{x, z, z, \ldots}$, respectively. Then, $\p{y, u}, \p{u, x}, \p{x, v}, \p{v, z} \in U$. But this implies $\p{y, z} \in U^4 \subseteq W$, a contradiction. Thus, it must be the case $G = \Delta_X$. 
\end{proof}

\begin{corollary}\label{cor:21-implies-function}
Suppose $\p{X, G}$ is a CR-dynamical system with $\Delta_X \subseteq G^n$ for some positive integer $n$. If $\p{X, G}$ has the $\p{2, 1}$-shadowing property, then $G = \Graph{f}$ for some continuous self-map $f$ on $X$.
\end{corollary}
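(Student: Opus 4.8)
The plan is to reduce the corollary to the situation already handled by Proposition~\ref{prop:G^n=diagonal-implies-G-single-valued}, namely the case $G^n = \Delta_X$. The obstacle in applying the earlier diagonal result, Proposition~\ref{prop:21-implies-diagonal}, directly to $G$ is that we only know $\Delta_X \subseteq G^n$, not $\Delta_X \subseteq G$. The key idea is therefore to apply that proposition to the iterate $G^n$ rather than to $G$ itself, which is legitimate once we transfer the shadowing hypothesis from $G$ to $G^n$.

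First I would invoke Lemma~\ref{lem:invariance-under-comp} with $j = 1$ and $k = n$: since $\p{X, G}$ has the $\p{2, 1}$-shadowing property, so does $\p{X, G^n}$. This is exactly the situation the lemma is designed for, and the standing assumption of the paper guarantees $\p{X, G^n}$ is itself a CR-dynamical system, so the application is valid. Next I would apply Proposition~\ref{prop:21-implies-diagonal} to the CR-dynamical system $\p{X, G^n}$: we are given $\Delta_X \subseteq G^n$, and we have just shown $\p{X, G^n}$ has the $\p{2, 1}$-shadowing property, so the proposition forces $G^n = \Delta_X$. Finally, with $G^n = \Delta_X$ established, Proposition~\ref{prop:G^n=diagonal-implies-G-single-valued} immediately produces a continuous self-map $f$ on $X$ with $\Graph{f} = G$, which is the desired conclusion.

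Because each step is a direct citation of a previously proven result, there is no genuine computational difficulty here; the corollary is essentially a bookkeeping combination of Lemma~\ref{lem:invariance-under-comp}, Proposition~\ref{prop:21-implies-diagonal}, and Proposition~\ref{prop:G^n=diagonal-implies-G-single-valued}. The only point requiring care — and what I would flag as the one subtle step — is the order of operations: one must first pass to the iterate $G^n$ (using invariance of $\p{2, 1}$-shadowing under composition) and only then invoke the diagonal result, since Proposition~\ref{prop:21-implies-diagonal} cannot be applied to $G$ directly when $\Delta_X \subseteq G$ fails.
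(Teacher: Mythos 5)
Your proposal is correct and follows exactly the same route as the paper's proof: transfer the $\p{2,1}$-shadowing property to $\p{X, G^n}$ via Lemma~\ref{lem:invariance-under-comp}, conclude $G^n = \Delta_X$ from Proposition~\ref{prop:21-implies-diagonal}, and finish with Proposition~\ref{prop:G^n=diagonal-implies-G-single-valued}. No gaps.
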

\begin{proof}
Suppose $\p{X, G}$ has the $\p{2, 1}$-shadowing property. By Lemma~\ref{lem:invariance-under-comp}, $\p{X, G^n}$ has the $\p{2, 1}$-shadowing property. By Proposition~\ref{prop:21-implies-diagonal}, $G^n = \Delta_X$. Result now follows by Proposition~\ref{prop:G^n=diagonal-implies-G-single-valued}. 
\end{proof}

We now work towards generalising Theorem~\ref{thm:periodic-shadowing-iff-totally-disconnected}.

\begin{lemma}\label{lem:set-valued-construction}
Suppose $\p{X, G}$ is a CR-dynamical system with $\Delta_X \subseteq G^n$ for some positive integer $n$. Let $k \in [n]$.  Define $F_k : X \to 2^X$ by 
\[
F_k\of{x} = \setOf{y \in X}:{y \in G^k\of{x} \text{ and } x \in G^{n-k}\of{y}}
\]
for each $x \in X$. Then, $\p{X, \Graph{F_k}}$ is a CR-dynamical system. 
\end{lemma}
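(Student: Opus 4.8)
The plan is to recognise $\Graph{F_k}$ as an intersection of two closed relations and then to read off non-emptiness directly from the hypothesis $\Delta_X \subseteq G^n$.

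First I would unwind the definition of $F_k$ to obtain the identity
\[
\Graph{F_k} = G^k \cap \p{G^{n-k}}^{-1}.
\]
Indeed, $\p{x, y} \in \Graph{F_k}$ exactly when $y \in G^k\of{x}$ and $x \in G^{n-k}\of{y}$; the first condition is $\p{x, y} \in G^k$, while the second says $\p{y, x} \in G^{n-k}$, i.e.\ $\p{x, y} \in \p{G^{n-k}}^{-1}$. Note this covers the boundary cases $k = 0$ and $k = n$ uniformly via the convention $G^0 = \Delta_X$.

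Next I would verify that each power $G^m$ is closed in $X \times X$ for $0 \leq m \leq n$. When $m = 0$ this is immediate, as $G^0 = \Delta_X$ is closed because $X$ is Hausdorff. For $m \geq 1$, Lemma~\ref{lem:mahavier-closed} gives that $\mahavier{m}{G}$ is closed in the compact space $\prod_{i=0}^m X$, hence compact. The projection onto the first and last coordinates, $\sequence{x_0, \ldots, x_m} \mapsto \p{x_0, x_m}$, is continuous with image exactly $G^m$; a continuous image of a compact set is compact, and therefore closed in the Hausdorff space $X \times X$. Since the coordinate-swap map is a self-homeomorphism of $X \times X$, the set $\p{G^{n-k}}^{-1}$ is closed as well. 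Consequently $\Graph{F_k} = G^k \cap \p{G^{n-k}}^{-1}$ is closed, being an intersection of two closed sets.

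Finally I would establish $\Graph{F_k} \neq \emptySet$. Fix any $x \in X$. Since $\Delta_X \subseteq G^n$, there is a path $\sequence{x_0, \ldots, x_n} \in \mahavier{n}{G}$ with $x_0 = x_n = x$. Putting $y = x_k$, the initial segment $\sequence{x_0, \ldots, x_k}$ witnesses $y \in G^k\of{x}$, and the terminal segment $\sequence{x_k, \ldots, x_n}$ witnesses $x = x_n \in G^{n-k}\of{x_k} = G^{n-k}\of{y}$; hence $\p{x, y} \in \Graph{F_k}$. Thus $\Graph{F_k}$ is a non-empty closed relation, so $\p{X, \Graph{F_k}}$ is a CR-dynamical system. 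The main obstacle is the closedness of the powers $G^m$, since this is where compactness of $X$ and Lemma~\ref{lem:mahavier-closed} are genuinely used; once the projection argument is in hand, the identity for $\Graph{F_k}$ and the non-emptiness check are routine.
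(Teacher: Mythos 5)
Your proof is correct, but it takes a genuinely different route from the paper's. The paper first verifies that $F_k$ is well-defined by writing $F_k\of{x} = \pi_k\of{\pi_0^{-1}\of{x} \intersect \pi_n^{-1}\of{x} \intersect \mahavier{n}{G}}$, and then proves closedness of $\Graph{F_k}$ pointwise: given $\p{x, y} \notin \Graph{F_k}$ it splits into the cases $y \notin G^k\of{x}$, and $y \in G^k\of{x}$ but $x \notin G^{n-k}\of{y}$, and in each case produces a basic open neighbourhood of $\p{x, y}$ missing $\Graph{F_k}$, using regularity of $X$ together with upper semi-continuity of the set-valued functions with graphs $G^k$ and $G^{n-k}$. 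Your decomposition $\Graph{F_k} = G^k \intersect \p{G^{n-k}}^{-1}$ short-circuits all of this: once each power $G^m$ is known to be closed --- which you obtain from Lemma~\ref{lem:mahavier-closed} by projecting the compact set $\mahavier{m}{G}$ onto its first and last coordinates --- closedness of $\Graph{F_k}$ is immediate, and the coordinate swap handles the inverse. Both arguments rest on the same underlying facts (Lemma~\ref{lem:mahavier-closed}, compactness, Hausdorffness), and the non-emptiness step is identical; yours is global and shorter, while the paper's local argument mirrors the upper semi-continuity machinery it deploys again later in Section~\ref{section:diagonal}. One small remark: the statement also implicitly requires each value $F_k\of{x}$ to be a non-empty closed subset of $X$, so that $F_k$ genuinely maps into $2^X$; your argument delivers this as well, since you show $F_k\of{x} \isNonempty$ for every $x \in X$, and each vertical slice of the closed set $\Graph{F_k}$ is closed.
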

\begin{proof}
We first show $F_k$ is well-defined (i.e., $F_k\of{x}$ is a non-empty closed subset of $X$ for each $x \in X$). Since $\Delta_X \subseteq G^n$, $F_k\of{x}$ is non-empty for each $x \in X$. Observe
\[
F_k\of{x} = \pi_k\of{\pi_0^{-1}\of{x} \intersect \pi_n^{-1}\of{x} \intersect \mahavier{n}{G}}.
\]
Indeed, the projection maps are continuous implying $\pi_0^{-1}\of{x} \intersect \pi_n^{-1}\of{x}$ is closed in $\prod_{i=0}^n X$, and thus $\pi_0^{-1}\of{x} \intersect \pi_n^{-1}\of{x} \intersect \mahavier{n}{G}$ is closed in $\prod_{i=0}^n X$ ($\mahavier{n}{G}$ is closed by Lemma~\ref{lem:mahavier-closed}). Since $X$ is compact and Hausdorff, it easily follows $F_k\of{x}$ is closed. 

Now, it remains to show $\Graph{F_k}$ is closed in $X \times X$. Suppose $\p{x, y} \in \p{X \times X} \setMinus \Graph{F_k}$. We consider two cases. First, suppose $y \notin G^k\of{x}$. Since $X$ is regular, there exist disjoint open nhoods $U$ and $V$ of $G^k\of{x}$ and $y$, respectively. As the set-valued function with graph $G^k$ is upper semi-continuous, there exists open nhood $W$ of $x$ such that if $z \in W$, then $G^k\of{z} \subseteq U$. Hence, $W \times V \subseteq \p{X \times X} \setMinus \Graph{F_k}$ is an open nhood of $\p{x, y}$. On the other hand, we now suppose $y \in G^k\of{x}$. It must be the case $x \notin G^{n-k}\of{y}$, for otherwise $y \in F_k\of{x}$ (contradicting $\p{x, y} \notin \Graph{F_k}$). Since $X$ is regular, there exist disjoint open nhoods $U$ and $V$ of $x$ and $G^{n-k}\of{y}$, respectively. As the set-valued function with graph $G^{n-k}$ is upper semi-continuous, there exists open nhood $W$ of $y$ such that if $z \in W$, $G^{n-k}\of{z} \subseteq V$. Observe $U \times W \subseteq \p{X \times X} \setMinus \Graph{F_k}$ is an open nhood of $\p{x, y}$. For if $\p{a, b} \in \p{U \times W} \intersect \Graph{F_k}$, then $a \in G^{n-k}\of{b} \subseteq V$ (but $a \in U$, and $U$ is disjoint from $V$, a contradiction). It follows $\Graph{F_k}$ is closed, and thus $\p{X, \Graph{F_k}}$ is a CR-dynamical system. 
\end{proof}

\begin{proposition}
Suppose $\p{X, G}$ is a CR-dynamical system. If $X$ is totally disconnected, and there exists positive integer $n$ such that for each $x \in X$ there exists unique $\sequence{x_0, \ldots, x_n} \in \mahavier{n}{G}$ with $\set{x_0, x_n} = \set{x}$, then $\p{X, G}$ has the $\p{1, 2}$-shadowing property.
\end{proposition}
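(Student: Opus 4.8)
The plan is to distil a continuous self-map out of the uniqueness hypothesis and then reduce to the periodic case handled by Theorem~\ref{thm:periodic-shadowing-iff-totally-disconnected}. The hypothesis says that each $x \in X$ admits exactly one closed walk $\sequence{x_0, \ldots, x_n} \in \mahavier{n}{G}$ of length $n$ with $x_0 = x_n = x$; denote it $W_x$. In particular this witnesses $x \in G^n\of{x}$, so $\Delta_X \subseteq G^n$ and Lemma~\ref{lem:set-valued-construction} is available.

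First I would show that the map $F_1$ from Lemma~\ref{lem:set-valued-construction}, given by $F_1\of{x} = \setOf{y \in X}:{y \in G\of{x} \text{ and } x \in G^{n-1}\of{y}}$, is single-valued. If $y \in F_1\of{x}$, then the edge $x \to y$ in $G$ followed by a length-$\p{n-1}$ walk from $y$ back to $x$ concatenates to a closed walk of length $n$ from $x$ to $x$; by uniqueness this walk must be $W_x$, forcing $y$ to be the first vertex $x_1$ of $W_x$. Hence $F_1\of{x} = \set{x_1}$ is a singleton for every $x$, and since Lemma~\ref{lem:set-valued-construction} guarantees $\Graph{F_1}$ is closed, Theorem~\ref{thm:motivation} yields a continuous self-map $f$ on $X$ with $\Graph{f} = \Graph{F_1}$ and $f\of{x} = x_1$.

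The crux, and the step I expect to be the main obstacle, is to verify $f^n = \text{id}_X$. For this I would argue that for each $k$ the cyclic rotation $\sequence{x_k, x_{k+1}, \ldots, x_{n-1}, x_0, \ldots, x_k}$ of $W_x$ (indices read modulo $n$, using $x_n = x_0 = x$) is again a closed walk of length $n$ from $x_k$; by the uniqueness hypothesis applied at $x_k$ it is \emph{the} unique such walk, so reading off its first step gives $f\of{x_k} = x_{k+1}$. Iterating from $x = x_0$ gives $f^k\of{x} = x_k$, whence $f^n\of{x} = x_n = x$, so $f^n = \text{id}_X$; consequently $f$ is a homeomorphism, with inverse $f^{n-1}$. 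The care needed here is in confirming that the rotated sequence genuinely lies in $\mahavier{n}{G}$ (the wrap-around edge $x_{n-1} \to x_0$ and the closing edge $x_{k-1} \to x_k$ are both edges of $W_x$) and that uniqueness at $x_k$ pins down $f\of{x_k}$ exactly.

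Finally I would assemble the pieces. Since $f^n = \text{id}_X$ and $X$ is totally disconnected, Theorem~\ref{thm:periodic-shadowing-iff-totally-disconnected} gives that $\p{X, f}$ has the shadowing property, whence $\p{X, \Graph{f}}$ has the $\p{1, 2}$-shadowing property by Proposition~\ref{prop:topological-relation-correspondence}. Because $f\of{x} = x_1$ is the first step of a walk in $G$, we have $\Graph{f} \subseteq G$; moreover $\nondegenerate{\Graph{f}} = X = \nondegenerate{G}$, as every point is the start of a $G$-walk. Lemma~\ref{lem:extension} then transfers the $\p{1, 2}$-shadowing property from $\p{X, \Graph{f}}$ to $\p{X, G}$, completing the proof.
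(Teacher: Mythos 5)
Your proposal is correct and follows essentially the same route as the paper: define $f\of{x} = x_1$ via the unique closed walk, identify $\Graph{f}$ with $\Graph{F_1}$ from Lemma~\ref{lem:set-valued-construction} to get continuity, invoke Theorem~\ref{thm:periodic-shadowing-iff-totally-disconnected} since $f^n = \text{id}_X$, and transfer the $\p{1, 2}$-shadowing property to $\p{X, G}$ via Lemma~\ref{lem:extension}. The paper states $f^n = \text{id}_X$ and $\Graph{f} = \Graph{F_1}$ without justification, whereas your cyclic-rotation argument and your verification that $F_1$ is single-valued supply exactly the missing details.
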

\begin{proof}
Suppose $X$ is totally disconnected, and there exists positive integer $n$ such that for each $x \in X$ there is exactly one $\sequence{x_0, \ldots, x_n} \in \mahavier{n}{G}$ with $\set{x_0, x_n} = \set{x}$. Define $f : X \to X$ by $f\of{x} = x_1$, where $\sequence{x_0, \ldots, x_n} \in \mahavier{n}{G}$ such that $\set{x_0, x_n} = \set{x}$, for each $x \in X$. We have $f^n = \text{id}_X$ and $\Delta_X \subseteq G^n$. Observe $\Graph{f} = \Graph{F_1}$, where $F_1$ is as defined in Lemma~\ref{lem:set-valued-construction}, and thus $\Graph{f}$ closed in $X$ (implying $f$ is continuous). 

By Theorem~\ref{thm:periodic-shadowing-iff-totally-disconnected}, $\p{X, f}$ has the shadowing property. Hence, the CR-dynamical system $\p{X, \Graph{f}}$ has the $\p{1, 2}$-shadowing property. By Lemma~\ref{lem:extension}, $\p{X, G}$ has the $\p{1, 2}$-shadowing property.
\end{proof}

\begin{note}
In the metric space setting, the proof of Theorem~\ref{thm:periodic-shadowing-iff-totally-disconnected} uses uniform continuity (see~\cite[Theorem $5$]{meddaugh_shadowing_recurrence}). Recently, uniform continuity has been generalised to set-valued functions on metric spaces~\cite{set_valued_uniform_continuity}. We generalise further, providing a notion of uniform continuity for usc set-valued functions on uniform spaces, which we use in our proof to generalise Theorem~\ref{thm:periodic-shadowing-iff-totally-disconnected}. 
\end{note}

\begin{definition}
Suppose $F : X \to 2^Y$ is an upper semi-continuous set-valued function with uniform spaces $\p{X, \collection{U}}$ and $\p{Y, \collection{V}}$. We will say $F$ is {\color{blue} \emph{uniformly upper semi-continuous}}, if for each entourage $V \in \collection{V}$, there exists entourage $U \in \collection{U}$ such that if $\p{x, y} \in U$, then $F\of{x} \subseteq V^{-1}\of{F\of{y}}$. 
\end{definition}

\begin{theorem}
Suppose $\p{X, G}$ is a CR-dynamical system with $\Delta_X \subseteq G^n$ for some positive integer $n$, and $F_k$ is as defined in Lemma~\ref{lem:set-valued-construction} for each $k \in [n]$. Then, 
\begin{enumerate}
    \item if $\p{X, G}$ has the $\p{2, 1}$-shadowing property, then $X$ is totally disconnected; 
     \item if $X$ is totally disconnected and $F_k$ is uniformly upper semi-continuous for each $k \in [n]$,
     then $\p{X, G}$ has the $\p{1, 2}$-shadowing property.
\end{enumerate}
\end{theorem}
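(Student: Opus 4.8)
The plan is to treat the two implications separately: implication (1) follows quickly from the rigidity results already proved, while implication (2) requires building a shadowing trajectory by a block-wise construction modelled on the periodic single-valued case.

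For (1), I would first invoke Corollary~\ref{cor:21-implies-function}: since $\p{X, G}$ has the $\p{2,1}$-shadowing property and $\Delta_X \subseteq G^n$, the relation $G$ is the graph of a continuous self-map $f$ on $X$, and (as the proof of that corollary in fact produces) $G^n = \Delta_X$. Because $G = \Graph{f}$ forces $G^n = \Graph{f^n}$, we get $\Graph{f^n} = \Delta_X$, i.e. $f^n = \text{id}_X$. By the implication diagram of Proposition~\ref{prop:implication}, the $\p{2,1}$-shadowing property implies all four $\p{i,j}$-shadowing properties, so $\p{X, \Graph{f}}$ possesses each of them; hence Proposition~\ref{prop:topological-relation-correspondence} shows the topological dynamical system $\p{X, f}$ has the classical shadowing property. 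Since $f^n = \text{id}_X$, Theorem~\ref{thm:periodic-shadowing-iff-totally-disconnected} yields that $X$ is totally disconnected, establishing (1).

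For (2), the strategy is to imitate the proof of the periodic case but with the single-valued powers $f^k$ replaced by the set-valued maps $F_k$. Two observations set the stage. Since $\Delta_X \subseteq G^n$, every point lies on an $n$-cycle, so $\legal{G} = X$ and $F_k\of{x} \neq \emptySet$ for every $x$ and every $k \in [n]$; consequently any sequence built by repeatedly choosing successors inside the $F_k$ is automatically a legal trajectory. Moreover $\Graph{F_1} \subseteq G$, $\nondegenerate{\Graph{F_1}} = X = \nondegenerate{G}$, and any $G$-cycle is an $F_1$-cycle (so $\Delta_X \subseteq \Graph{F_1}^n$), which by Lemma~\ref{lem:extension} means it suffices to realise the shadowing trajectory using only $F_1$-successors. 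Given an entourage $U$, I would exploit total disconnectedness to pick a symmetric, idempotent clopen-partition entourage $U_0 \subseteq U$ (so that $U_0\of{C}$ equals the cell $C$), and then feed $U_0$ into the uniform upper semi-continuity of each of $F_0, \ldots, F_n$ to obtain a single finer clopen-partition entourage $V$ compatible with all of them simultaneously; this $V$ serves as the pseudo-orbit gauge. The decisive local fact is the rigidity of the $\p{V,1}$-condition: with $V$ symmetric it forces $G\of{x_m} \subseteq V\of{x_{m+1}}$, so every $G$-successor of $x_m$ lies in the one cell around $x_{m+1}$.

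The trajectory is then assembled in blocks of length $n$. Beginning with $y_0 = x_0$ and assuming $y_{jn}$ shares a $U_0$-cell with $x_{jn}$, I would use the uniform upper semi-continuity of $F_1, \ldots, F_{n-1}$ — finitely many applications within the block — together with the cell-rigidity above to select a genuine $G$-path $y_{jn}, \ldots, y_{\p{j+1}n}$ whose $k$-th vertex lies in the cell of $x_{jn+k}$, invoking $\Delta_X \subseteq G^n$ to close the block so that $y_{\p{j+1}n}$ re-synchronises with the cell of $x_{\p{j+1}n}$. Concatenating the blocks produces a trajectory of the legal point $y_0$ lying within $U_0 \subseteq U$ of $\sequenceOf{x_n}:{n \in \omega}$, giving the required $\p{U, 2}$-shadowing. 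The main obstacle is exactly the transfer of the pseudo-orbit's cell-itinerary onto an honest trajectory: upper semi-continuity controls the $F_k$ only from above, so a naive step-by-step selection would let the tracking error accumulate unboundedly. What makes the argument close is the $n$-periodic structure — every point lies on an $n$-cycle — which confines the selection inside each block to a \emph{fixed} number $n$ of uniform-usc applications and permits re-anchoring to the pseudo-orbit at the block boundaries. Making this re-synchronisation precise, and verifying that the finitely many uniform-usc gauges for $F_0, \ldots, F_n$ can be realised by one common clopen partition, is the technical heart of the proof.
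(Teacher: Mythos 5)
Your argument for part $\p{1}$ is correct and is a mild variant of the paper's: the paper applies Lemma~\ref{lem:invariance-under-comp} to pass to $\p{X, G^n}$ and then invokes Theorem~\ref{thm:diagonal-totally-disconnected} directly, whereas you first collapse $G$ to the graph of a continuous map with $f^n = \identity{X}$ via Corollary~\ref{cor:21-implies-function} and then quote Theorem~\ref{thm:periodic-shadowing-iff-totally-disconnected}. Both routes are sound; the paper's is shorter and avoids the detour through Proposition~\ref{prop:topological-relation-correspondence}.

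Part $\p{2}$ is where the proposal does not close, and the gap sits exactly at the point you flag as ``the technical heart.'' Your block-wise forward selection needs, at every step inside a block, to apply uniform upper semi-continuity of $F_1$ to the pair $\p{y_{jn+k}, x_{jn+k}}$. But uniform usc only gives you $F_1\of{y} \subseteq D^{-1}\of{F_1\of{x}}$ when $\p{y, x}$ lies in the usc gauge $E_1$, and all you know about $y_{jn+k}$ and $x_{jn+k}$ is that they share a clopen cell. You cannot arrange for every cell $C$ to satisfy $C \times C \subseteq E_1$, because $E_1$ is the gauge for $F_1$ at precision $D$, and $D$ must be chosen so that its balls refine the cells: the gauge is downstream of the cell size, so demanding cells finer than the gauge is circular. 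The same obstruction defeats the re-synchronisation at block boundaries: taking the $n$-cycle through $y_{jn}$ returns you to $y_{jn}$ itself, which lies in the cell of $x_{jn}$, and the assertion that this is also the cell of $x_{\p{j+1}n}$ is not something the block construction provides --- it is precisely the statement one has to prove. So the $n$-periodicity does not by itself stop the error accumulation you correctly identify.

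The paper escapes this by never applying uniform usc to a (trajectory point, pseudo-orbit point) pair. Instead it proves the purely pseudo-orbit-side inclusion $F_{k+1}\of{x_j} \subseteq F_k\of{F_1\of{x_j}} \subseteq D^{-1}\of{F_k\of{x_{j+1}}}$, where the usc hypothesis is applied only to pairs $\p{y, x_{j+1}}$ with $y \in F_1\of{x_j}$ --- and those pairs lie in the gauge $E$ by the $\p{E,1}$-pseudo-orbit condition itself. Telescoping these inclusions backward from $F_0\of{x_i} = \set{x_i}$, and using that each $D$-ball sits inside a single cell of the pairwise disjoint clopen refinement (so the $D$-error is absorbed by the cell at every stage rather than compounding), one gets that the entire set $F_i\of{x_0}$ lies in the cell of $x_i$ for every $i$. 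The periodic trajectory $z_i = w_{i \bmod n}$ built from an $n$-cycle $\sequenceOf{w_i}:{i \in [n]}$ through $x_0$ satisfies $z_i \in F_i\of{x_0}$, so it tracks within $V^2 \subseteq U$. If you want to salvage your write-up, replace the forward block selection by this backward propagation of the sets $F_k\of{x_j}$; the rest of your scaffolding (reduction to $\Graph{F_1}$ via Lemma~\ref{lem:extension}, the clopen partition from total disconnectedness, the single common gauge for $F_0, \ldots, F_n$) matches the paper and is fine.
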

\begin{proof}
Suppose $\p{X, G}$ has the $\p{2, 1}$-shadowing property and $\Delta_X \subseteq G^n$ for some positive integer $n$.  By Lemma~\ref{lem:invariance-under-comp}, $\p{X, G^n}$ has the $\p{2, 1}$-shadowing property. By Theorem~\ref{thm:diagonal-totally-disconnected}, $X$ is totally disconnected, establishing $\p{1}$.


For $\p{2}$, we abstract the proof strategies in~\cite[Theorem $4.1$]{topological_chain_uniform} and~\cite[Theorem $5$]{meddaugh_shadowing_recurrence}. Suppose $X$ is totally disconnected, and $F_k$ is uniformly upper semi-continuous for each $k \in [n]$. We show $\p{X, \Graph{F_1}}$ has the $\p{1, 2}$-shadowing property. To this end, suppose $U$ is an entourage of $X$.  There exists $V \in \collection{U}$ such that $V^2 \subseteq U$ and $V = V^{-1}$.  Since $X$ is totally disconnected compact Hausdorff, for each $y \in X$ there exists open and compact subset $W_y$ of $X$ such that $y \in W_y \subseteq \interior{V\of{y}} \subseteq V\of{y}$.  By compactness of $X$, there is a finite open subcover $\collection{W}$ of $\setOf{W_y}:{y \in X}$.  There is a finite open refinement $\collection{F}$ of $\collection{W}$ which is pairwise disjoint (Theorem $6.2.7$ and Theorem $6.2.5$ in~\cite{engelking}). By Proposition $8.16$ in~\cite{james_uniformities}, there exists entourage $D \subseteq U$ such that $\setOf{D\of{x}}:{x \in X}$ refines $\collection{F}$. 

Define $F_k := F_{k \mod n}$ for each $k \in \omega$. By assumption, $F_k$ is uniformly upper semi-continuous for each $k \in [n]$. Hence, for each $k \in [n]$, there exists an entourage $E_k$ such that, if $\p{x, y} \in E_k$, then $F_k\of{x} \subseteq D^{-1}\of{F_k\of{y}}$. Let $E = \Intersection_{k =1}^n E_k$, which is an entourage with the property that if $\p{x, y} \in E$, then $F_k\of{x} \subseteq D^{-1}\of{F_k\of{y}}$ for all $k \in \omega$.  

Now, suppose $\sequenceOf{x_i}:{i \in \omega}$ is an $\p{E, 1}$-pseudo-orbit. Then, $F_1\of{x_i} \times \set{x_{i+1}} \subseteq E$ for each $i \in \omega$. We will show $x_0$ $\p{U, 2}$-shadows $\sequenceOf{x_i}:{i \in \omega}$. To see this, firstly observe for each $k, j \in \omega$, $F_k\of{F_1\of{x_j}} \subseteq D^{-1}\of{F_k\of{x_{j+1}}}$. We show $F_{k+1}\of{x} \subseteq F_k\of{F_1\of{x}}$ for each $x \in X$ and $k \in \omega$. Suppose $x \in X$ and $k \in \omega$. Say $y \in F_{k+1}\of{x}$. That is, $y \in F_{\tilde{k}}\of{x}$, where $\tilde{k} = k + 1 \mod n$ (however, if $k + 1 \mod n = 0$, take $\tilde{k} = n$). Then, $y \in G^{\tilde{k}}\of{x}$ and $x \in G^{n - \tilde{k}}\of{y}$. Notice $\tilde{k} - 1 = k \mod n$. We observe $y \in G^{\tilde{k} - 1}\of{G\of{x}}$. Choose $z \in G\of{x}$ such that $y \in G^{\tilde{k}-1}\of{z}$.  Then, $z \in F_1\of{x}$, since $z \in G\of{x}$ and $x \in G^{n-\tilde{k}}\of{y} \subseteq G^{n-\tilde{k}}\of{G^{\tilde{k} - 1}\of{z}} =  G^{n-1}\of{z}$. Furthermore, $y \in G^{\tilde{k} - 1}\of{z}$ and $z \in G\of{x} \subseteq G^{n - \p{\tilde{k} - 1}}\of{y}$, implying $y \in F_k\of{z} \subseteq F_k\of{F_1\of{x}}$. Hence, $F_{k+1}\of{x} \subseteq F_k\of{F_1\of{x}}$, as desired.  Therefore, for each $k, j \in \omega$, $F_{k+1}\of{x_j} \subseteq D^{-1}\of{F_k\of{x_{j+1}}}$. In particular, for each $i \in \omega$, $F_{i-k}\of{x_k} \subseteq D^{-1}\of{F_{i-k-1}\of{x_{k+1}}}$ for each $k \in [i - 1]$.




We claim, for each $i \in \omega$, there exists $F_{y_i} \in \collection{F}$ (where $F_{y_i} \subseteq W_{y_i} \in \collection{W}$) such that $F_i\of{x_0}, \set{x_i} \subseteq F_{y_i} \subseteq W_{y_i} \subseteq V\of{y_i}$. Suppose $i \in \omega$. By the above reasoning, $F_{1}\of{x_{i-1}} \subseteq D^{-1}\of{F_0\of{x_{i}}} = D^{-1}\of{x_i}$. Since $\setOf{D\of{x}}:{x \in X}$ refines $\collection{F}$ (which is pairwise disjoint), there exists $F_{y_i} \in \collection{F}$ (where $F_{y_i} \subseteq W_{y_i} \in \collection{W}$) such that $F_1\of{x_{i-1}}, \set{x_i} \subseteq F_{y_i} \subseteq W_{y_i} \subseteq V\of{y_i}$. We also have $F_2\of{x_{i-2}} \subseteq D^{-1}\of{F_{1}\of{x_{i-1}}}$, and since $\setOf{D\of{x}}:{x \in X}$ refines $\collection{F}$,  $\collection{F}$ is pairwise disjoint and $F_1\of{x_{i-1}} \subseteq F_{y_i}$, it follows $F_2\of{x_{i-2}} \subseteq F_{y_i} \subseteq W_{y_i} \subseteq V\of{y_i}$. Continuing this way inductively, our claim follows.  

Let $\sequenceOf{w_i}:{i \in [n]} \in \mahavier{n}{G}$ such that $w_0 = x_0$ and $w_n = x_0$. Let $\sequenceOf{z_i}:{i \in \omega} \in T_{\Graph{F_1}}^+\of{x_0}$, where $z_i = w_{i \mod n} \in F_i\of{x_0}$ for each $i \in \omega$. By our claim, it follows for each $i \in \omega$, $\p{z_i, y_i} \in V$ and $\p{y_i, x_i} \in V$ which implies $\p{z_i, x_i} \in U$. Hence, $x_0$ $\p{U, 2}$-shadows $\sequenceOf{x_k}:{k \in \omega}$, and therefore $\p{X, \Graph{F_1}}$ has the $\p{1, 2}$-shadowing property. Thus, by Lemma~\ref{lem:extension}, $\p{X, G}$ has the $\p{1, 2}$-shadowing property, yielding $\p{2}$. 
\end{proof}

\begin{note}
For $\p{2}$, there are examples where it is not the case $F_k$ is uniformly upper semi-continuous for each $k \in [n]$, yet $\p{X, G}$ has the $\p{1, 2}$-shadowing property. For example, take $X$ to be the Cantor set and $G = \p{X \times \set{0}} \union \p{\set{0} \times X}$. By Lemma~\ref{lem:extension}, $\p{X, G}$ has the $\p{1, 2}$-shadowing property, since the constant map sending everything to $0$ has shadowing. Moreover, $G^2 = X \times X$, and so clearly $\Delta_X \subseteq G^2$. Observe $\Graph{F_1} = G$. Indeed, $F_1$ is not uniformly upper semi-continuous (if $U \in \collection{U}$, there exists $n \in \omega$ such that $\p{0, \Fraction 2 / {3^n}} \in U$, but we may choose $V \in \collection{U}$ such that $X \times \set{0} \not\subseteq V$). Due to this, and with no counter-examples known, we formulate the following question.  
\end{note}

\question{Suppose $\p{X, G}$ is a CR-dynamical system with $\Delta_X \subseteq G^n$ for some positive integer $n$, where $\p{X, \collection{U}}$ is a uniform space. If $X$ is totally disconnected, must $\p{X, G}$ have the $\p{1, 2}$-shadowing property?}

\smallskip

We now conclude this section with the following observation, as a generalisation of Lemma~\ref{lem:invariance-under-comp-topological}. 

\begin{corollary}
Let $\p{X, G}$ be a CR-dynamical system with $\Delta_X \subseteq G$. Then $\p{X, G}$ has the $\p{1, 2}$-shadowing property if, and only if, $\p{X, G^n}$ has the $\p{1, 2}$-shadowing property for all $n \geq 1$. 
\end{corollary}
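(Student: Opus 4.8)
The plan is to reduce the statement entirely to Proposition~\ref{prop:invariance-under-comp-12}, whose hypothesis is precisely the ascending chain $G \subseteq G^2 \subseteq G^3 \subseteq \cdots$. The equivalence we want is the conclusion of that proposition, so the only genuinely new work specific to this corollary is to check that the assumption $\Delta_X \subseteq G$ forces this chain.

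First I would establish that $G^n \subseteq G^{n+1}$ for every $n \geq 1$. Suppose $\p{x, y} \in G^n$; by definition there is a sequence $\sequence{x_0, \ldots, x_n} \in \mahavier{n}{G}$ with $x_0 = x$ and $x_n = y$. Since $\Delta_X \subseteq G$, we have $\p{y, y} = \p{x_n, x_n} \in G$, so appending $y$ produces $\sequence{x_0, \ldots, x_n, y} \in \mahavier{n+1}{G}$, which witnesses $\p{x, y} \in G^{n+1}$. Combined with the given containment $\Delta_X = G^0 \subseteq G$ (which takes care of the first link), this yields the full chain $G \subseteq G^2 \subseteq G^3 \subseteq \cdots$.

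Once the chain is in hand, I would simply invoke Proposition~\ref{prop:invariance-under-comp-12}, which then gives directly that $\p{X, G}$ has the $\p{1, 2}$-shadowing property if and only if $\p{X, G^n}$ has it for all $n \geq 1$.

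I do not expect any real obstacle here: the substantive content is carried entirely by Proposition~\ref{prop:invariance-under-comp-12}, and the corollary is essentially the observation that the diagonal hypothesis is a convenient sufficient condition for the monotonicity hypothesis of that proposition. The only points requiring care are the indexing convention $G^0 = \Delta_X$ and ensuring the path-appending step respects the definition of $\mahavier{n+1}{G}$ (i.e. that consecutive pairs of the extended sequence lie in $G$), both of which are immediate.
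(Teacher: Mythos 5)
Your proposal is correct and follows the same route as the paper, which simply cites Proposition~\ref{prop:invariance-under-comp-12}; you additionally spell out the (easy but worth noting) verification that $\Delta_X \subseteq G$ yields the ascending chain $G \subseteq G^2 \subseteq G^3 \subseteq \cdots$ by appending the fixed pair $\p{y,y}$ to a path, a step the paper leaves implicit.
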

\begin{proof}
Follows directly from Proposition~\ref{prop:invariance-under-comp-12}. 
\end{proof}


\section{Shadowing in the infinite Mahavier product}\label{section:mahavier}
In this section we consider how $\p{i, j}$-shadowing in $\p{X, G}$ relates to shadowing in $\p{X_G^+, \sigma_G^+}$. We focus on closed relations $G$ on compact metric spaces $\p{X, d}$, with the property that there exists an isometry $f : X \to X$ with $\Graph{f} \subseteq G$. Our restriction to this setting is inspired by the following classical result, which generalises Theorem~\ref{thm:id-shadowing-iff-totally-disconnected}. 

\begin{theorem}\label{thm:isometry-mahavier}
Suppose $\p{X, f}$ is a topological dynamical system, where $f : X \to X$ is an isometry.  Then, the following are equivalent. 
\begin{enumerate}
\item $\p{X, f}$ has the shadowing property. 
\item $\p{X_\Graph{f}^+, \sigma_\Graph{f}^+}$ has the shadowing property.
\item $X$ is totally disconnected.
\item $X_\Graph{f}^+$ is totally disconnected. 
\end{enumerate}
\end{theorem}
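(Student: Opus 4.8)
The plan is to reduce the whole theorem to the identity map, where Theorem~\ref{thm:id-shadowing-iff-totally-disconnected} already does the work, and to connect $\p{X, f}$ with its Mahavier system by an explicit conjugacy. First I would record two structural facts about an isometry $f$ of the compact metric space $X$. Since $f$ is an isometry of a compact metric space it is automatically surjective (the standard argument tracking the orbit of a point outside the closed set $f\of{X}$), hence a bijective isometry, so $f^{-1}$ exists and is itself an isometry. Second, the map $\phi : X \to X_{\Graph{f}}^+$ given by $\phi\of{x} = \sequence{x, f\of{x}, f^2\of{x}, \ldots}$ is well defined and bijective because $\Graph{f}$ is a function graph, so $\p{x_n, x_{n+1}} \in \Graph{f}$ forces $x_{n+1} = f\of{x_n}$ and every element of $X_{\Graph{f}}^+$ is the forward orbit of its first coordinate. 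It is continuous coordinatewise, hence a homeomorphism as a continuous bijection from a compact space onto a Hausdorff space, and a direct computation gives $\sigma_{\Graph{f}}^+ \compose \phi = \phi \compose f$, so $\phi$ is a topological conjugacy between $\p{X, f}$ and $\p{X_{\Graph{f}}^+, \sigma_{\Graph{f}}^+}$.

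From $\phi$ being a homeomorphism I immediately get that $X$ is totally disconnected if, and only if, $X_{\Graph{f}}^+$ is totally disconnected, which is $\p{3} \Leftrightarrow \p{4}$; and since the shadowing property for topological dynamical systems is invariant under topological conjugacy (a standard fact, also recoverable here through Proposition~\ref{prop:topological-relation-correspondence} and conjugacy-invariance of the $\p{i,j}$-shadowing properties), I get $\p{1} \Leftrightarrow \p{2}$. It therefore remains only to prove $\p{1} \Leftrightarrow \p{3}$.

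The crux, which I expect to be the main idea of the proof, is the substitution $u_n = f^{-n}\of{x_n}$, equivalently $x_n = f^n\of{u_n}$. Because $f$ is an isometry, $d\of{f\of{x_n}, x_{n+1}} = d\of{f^{n+1}\of{u_n}, f^{n+1}\of{u_{n+1}}} = d\of{u_n, u_{n+1}}$, so $\sequence{x_n}$ is a $\delta$-pseudo-orbit of $\p{X, f}$ if, and only if, $\sequence{u_n}$ is a $\delta$-pseudo-orbit of $\p{X, \text{id}_X}$; likewise $d\of{f^n\of{y}, x_n} = d\of{f^n\of{y}, f^n\of{u_n}} = d\of{y, u_n}$, so $y$ $\epsilon$-shadows $\sequence{x_n}$ under $f$ if, and only if, $y$ $\epsilon$-shadows $\sequence{u_n}$ under $\text{id}_X$. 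As $\sequence{x_n} \mapsto \sequence{u_n}$ is a bijection on sequences matching pseudo-orbits with pseudo-orbits and preserving shadowing points, $\p{X, f}$ has the shadowing property if, and only if, $\p{X, \text{id}_X}$ does. By Theorem~\ref{thm:id-shadowing-iff-totally-disconnected} the latter holds exactly when $X$ is totally disconnected, establishing $\p{1} \Leftrightarrow \p{3}$ and closing the cycle of equivalences.

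The only genuinely delicate point is that the substitution requires $f^{-1}$, which is why recording the surjectivity of the isometry up front matters; once $f$ is known to be a bijective isometry the reduction to $\text{id}_X$ is exact. I expect no accumulation-of-error difficulties of the kind that usually plague shadowing for non-contracting maps, precisely because the isometry hypothesis lets the entire question be transported to the identity system. Everything else — coordinatewise continuity of $\phi$, the compact-to-Hausdorff homeomorphism argument, and conjugacy-invariance of shadowing — is routine.
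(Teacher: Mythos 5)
Your proof is correct, but it is organised quite differently from the paper's. The paper proves $\p{1}\Leftrightarrow\p{3}$ from scratch by adapting the argument for the identity map: for $\p{1}\Rightarrow\p{3}$ it turns a chain $\sequenceOf{x_n}:{n\in\omega}$ inside a nontrivial component into the $\delta$-pseudo-orbit $\sequenceOf{f^n\of{x_n}}:{n\in\omega}$ (the inverse of your substitution $u_n = f^{-n}\of{x_n}$), and for $\p{3}\Rightarrow\p{1}$ it runs the clopen-cover argument directly, tracking that $f^n\of{x_0}$ and $x_n$ stay in the same clopen set. It then obtains $\p{2}\Leftrightarrow\p{4}$ by observing that $\sigma^+_{\Graph{f}}$ is itself an isometry of $\p{X^+_{\Graph{f}}, \rho}$ and applying $\p{1}\Leftrightarrow\p{3}$ to the Mahavier system, and it proves $\p{1}\Leftrightarrow\p{2}$ by direct estimates with the metric $\rho$. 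You instead reduce $\p{1}\Leftrightarrow\p{3}$ entirely to the already-quoted Theorem~\ref{thm:id-shadowing-iff-totally-disconnected} via the exact correspondence $x_n = f^n\of{u_n}$ between pseudo-orbits of $f$ and of $\text{id}_X$ --- legitimate once you note, as you do, that an isometry of a compact metric space is surjective, so $f^{-1}$ exists --- and you dispatch $\p{1}\Leftrightarrow\p{2}$ and $\p{3}\Leftrightarrow\p{4}$ simultaneously through the conjugacy $\phi\of{x} = \sequence{x, f\of{x}, f^2\of{x}, \ldots}$. Your route is shorter and reuses the cited theorem rather than reproving it; in fact $\phi$ is even an isometry onto $\p{X^+_{\Graph{f}}, \rho}$, since $d\of{f^k\of{x}, f^k\of{y}} = d\of{x, y}$ gives $\rho\of{\phi\of{x}, \phi\of{y}} = d\of{x, y}$, which would let you bypass the appeal to conjugacy-invariance of shadowing altogether. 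What the paper's longer route buys is a self-contained template: its hands-on proof of $\p{1}\Leftrightarrow\p{3}$ and its $\rho$-computations are precisely the pieces it later perturbs for the genuinely relational generalisations in the rest of Section~\ref{section:mahavier}, where $G$ is no longer a function graph and no conjugacy with the base space is available.
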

\begin{proof}
We firstly show the equivalence between $\p{1}$ and $\p{3}$. The proof is similar to~\cite[Theorem $3$]{meddaugh_shadowing_recurrence}. To this end, firstly suppose $\p{X, f}$ has the shadowing property. To derive a contradiction, suppose $X$ has a non-trivial connected component $C$. Let $\epsilon > 0$ be less than the diameter of $C$, and let $x \neq y \in C$ be such that $d\of{x, y} > \epsilon$. There exists $\delta > 0$ satisfying the shadowing property for $\p{X, f}$, such that every $\delta$-pseudo-orbit is $\Fraction \epsilon / 2$-shadowed by a point. By~\cite[Exercise $4.23$ Part $\p{d}$]{nadler}, since $C$ is connected, there is a sequence $\sequenceOf{x_n}:{n \in \omega}$ and $m > 0$ such that $x_0 = x$, $x_k = y$ for each $k \geq m$, and $d\of{x_n, x_{n+1}} < \delta$ for each $n \in \omega$. Notice $\sequenceOf{f^n\of{x_n}}:{n \in \omega}$ is a $\delta$-pseudo-orbit, since 
\[
d\of{f\of{f^n\of{x_n}}, f^{n+1}\of{x_{n+1}}} = d\of{x_n, x_{n+1}} \leq \delta
\]
for each $n \in \omega$. There exists $z \in X$ which $\Fraction \epsilon / 2$-shadows our $\delta$-pseudo-orbit. However,  
\[
d\of{x, y} \leq d\of{x, z} + d\of{y, z} = d\of{x, z} + d\of{f^m\of{y}, f^m\of{z}} < \epsilon,
\]
a contradiction. Hence, $X$ is totally disconnected, establishing the implication from $\p{1}$ to $\p{3}$. 

Conversely, suppose $X$ is totally disconnected. Fix $\epsilon > 0$. As $X$ is a totally disconnected compact metric space, there is a finite cover $\collection{U}$ which consists of pairwise disjoint clopen sets, each with diameter strictly less than our chosen $\epsilon$. Now, set $\eta := \min\setOf{d\of{U, V}}:{U \neq V \in \collection{U}}$, and let $\delta \in \p{0, \eta}$ be arbitrary. Suppose $\sequenceOf{x_n}:{n \in \omega}$ is a $\delta$-pseudo-orbit. By choice of $\delta$, if $f\of{x_n} \in U$ for some $U \in \collection{U}$, $d\of{f\of{x_n}, x_{n+1}} \leq \delta$ implies $d\of{x_{n+1}, U} \leq \delta$ and therefore $x_{n+1} \in U$. So, if $f\of{x_0} \in U$, $x_1 \in U$. But also, $d\of{f^2\of{x_0}, f\of{x_1}} \leq \delta$ and $d\of{f\of{x_1}, x_2} \leq \delta$, so if $f^2\of{x_0} \in U$ some $U \in \collection{U}$, then $f\of{x_1}$ and consequently $x_2$ are both in $U$. Continuing in this fashion, it follows $f^n\of{x_0}$ and $x_n$ lie in the same $U_n \in \collection{U}$ for each $n \in \omega$. Since $d\of{f^n\of{x_0}, x_n} \leq \text{diam}\of{U_n} < \epsilon$ for each $n \in \omega$, it follows $x_0$ $\epsilon$-shadows $\sequenceOf{x_n}:{n \in \omega}$. Thus, $\p{X, f}$ has the shadowing property, establishing the implication from $\p{3}$ to $\p{1}$.

Now, the equivalence between $\p{1}$ and $\p{3}$ yields an equivalence between $\p{2}$ and $\p{4}$, since $\p{X_\Graph{f}^+, \sigma_\Graph{f}^+}$ is a topological dynamical system, where $\sigma_\Graph{f}^+$ is an isometry.  

Finally, we show $\p{1}$ and $\p{2}$ are equivalent. To this end, suppose $\p{X, f}$ has the shadowing property. Let $\epsilon > 0$. There exists $\delta > 0$ satisfying the shadowing property for $\p{X, f}$, such that every $\delta$-pseudo-orbit is $\epsilon$-shadowed by a point. Suppose $\sequenceOf{\p{f^k\of{x_n}}_{k \in \omega}}:{n \in \omega}$ is a $\delta$-pseudo-orbit in $\p{X_{\Graph{f}}^+, \sigma_\Graph{f}^+}$. Then, for each $n \in \omega$, 
\[
d\of{f\of{x_n}, x_{n+1}} = \sum_{k = 0}^\infinity \Fraction d\of{f\of{x_n}, x_{n+1}} / {2^{k+1}}  = \sum_{k = 0}^\infinity \Fraction d\of{f^{k+1}\of{x_n}, f^k\of{x_{n+1}}} / {2^{k+1}} \leq \delta.
\]
Hence, $\sequenceOf{x_n}:{n \in \omega}$ is a $\delta$-pseudo-orbit in $\p{X, f}$. There exists $y \in X$ which $\epsilon$-shadows $\sequenceOf{x_n}:{n \in \omega}$, that is, $d\of{x_n, f^n\of{y}} < \epsilon$ for each $n \in \omega$. Since 
\[
d\of{f^k\of{x_n}, f^{n+k}\of{y}} = d\of{x_n, f^n\of{y}} < \epsilon
\]
for each $n, k \in \omega$, it follows for each $n \in \omega$,
\[
\sum_{k = 0}^\infinity \Fraction d\of{f^k\of{x_n}, f^{n+k}\of{y}} / {2^{k+1}} < \epsilon. 
\]
Thus, $\sequenceOf{f^k\of{y}}:{k \in \omega}$ $\epsilon$-shadows $\sequenceOf{\p{f^k\of{x_n}}_{k \in \omega}}:{n \in \omega}$. Hence, $\p{X_\Graph{f}^+, \sigma_\Graph{f}^+}$ has the shadowing property, yielding the implication from $\p{1}$ to $\p{2}$. 

Conversely, suppose $\p{X_\Graph{f}^+, \sigma_\Graph{f}^+}$ has the shadowing property.  Let $\epsilon > 0$. There exists $\delta > 0$ satisfying the shadowing property for $\p{X_\Graph{f}^+, \sigma_\Graph{f}^+}$, such that every $\delta$-pseudo-orbit is $\Fraction \epsilon / 2$-shadowed by a point. Suppose $\sequenceOf{x_n}:{n \in \omega}$ is a $\delta$-pseudo-orbit in $\p{X, f}$. Then, for each $n \in \omega$, $d\of{f\of{x_n}, x_{n+1}} \leq \delta$. Observe $\sequenceOf{\p{f^k\of{x_n}}_{k \in \omega}}:{n \in \omega}$ is a $\delta$-pseudo-orbit in $\p{X_\Graph{f}^+, \sigma_\Graph{f}^+}$, and thus $\Fraction \epsilon / 2$-shadowed by some $\sequenceOf{f^k\of{y}}:{k \in \omega} \in X_\Graph{f}^+$. Therefore, for each $n \in \omega$, 
\[
\sum_{k = 0}^\infinity \Fraction d\of{f^k\of{x_n}, f^{n + k}\of{y}} / {2^{k+1}} < \Fraction \epsilon / 2, 
\]
implying $d\of{x_n, f^n\of{y}} < \epsilon$. Hence, $y$ $\epsilon$-shadows $\sequenceOf{x_n}:{n \in \omega}$. Thus, it follows $\p{X, f}$ has the shadowing property, and we are done. 
\end{proof}

The equivalence between $\p{1}$ and $\p{3}$ in Theorem~\ref{thm:isometry-mahavier} generalises below, similar to how Theorem~\ref{thm:id-shadowing-iff-totally-disconnected} generalises to Theorem~\ref{thm:diagonal-totally-disconnected}.  Proposition~\ref{prop:id-shadowing-diagonal-totally_disconnected-counterexamples} shows that it cannot be improved upon. 

\begin{theorem}
Suppose $\p{X, G}$ is a CR-dynamical system, such that there exists an isometry $f : X \to X$ with $\Graph{f} \subseteq G$. Then,
\begin{enumerate}
\item if $\p{X, G}$ has the $\p{2, 1}$-shadowing property, then $X$ is totally disconnected; 
\item if $X$ is totally disconnected, then $\p{X, G}$ has the $\p{1, 2}$-shadowing property. 
\end{enumerate}
\end{theorem}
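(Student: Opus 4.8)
The plan is to mirror the proof of Theorem~\ref{thm:diagonal-totally-disconnected}, replacing the diagonal $\Delta_X$ by $\Graph{f}$, the identity map by the isometry $f$, and Theorem~\ref{thm:id-shadowing-iff-totally-disconnected} by Theorem~\ref{thm:isometry-mahavier}. The key preliminary observation is that, since $f$ is a total self-map with $\Graph{f} \subseteq G$, we have $\p{x, f\of{x}} \in \Graph{f} \subseteq G$ for every $x \in X$; hence every point of $X$ is both legal and non-degenerate in each system, giving $\legal{\Graph{f}} = \legal{G} = X$ and $\nondegenerate{\Graph{f}} = \nondegenerate{G} = X$. These equalities are precisely the hypotheses required by the transfer lemmas. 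I also use that, because $\Graph{f}$ is single-valued, each point has a unique trajectory, so the distinction between the universal (index $1$) and existential (index $2$) quantifiers in the pseudo-orbit and shadowing definitions disappears; consequently, by Proposition~\ref{prop:topological-relation-correspondence}, any $\p{i, j}$-shadowing property of $\p{X, \Graph{f}}$ is equivalent to the ordinary shadowing property of $\p{X, f}$.

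For $\p{1}$, suppose $\p{X, G}$ has the $\p{2, 1}$-shadowing property. Since $\Graph{f} \subseteq G$ and $\legal{\Graph{f}} = \legal{G}$, Lemma~\ref{lem:contraction} shows that $\p{X, \Graph{f}}$ has the $\p{2, 1}$-shadowing property. By the single-valuedness observation and Proposition~\ref{prop:topological-relation-correspondence}, $\p{X, f}$ then has the ordinary shadowing property. As $f$ is an isometry, the equivalence of $\p{1}$ and $\p{3}$ in Theorem~\ref{thm:isometry-mahavier} forces $X$ to be totally disconnected.

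For $\p{2}$, suppose $X$ is totally disconnected. The implication $\p{3} \Rightarrow \p{1}$ of Theorem~\ref{thm:isometry-mahavier} gives that $\p{X, f}$ has the shadowing property, so Proposition~\ref{prop:topological-relation-correspondence} yields that $\p{X, \Graph{f}}$ has the $\p{1, 2}$-shadowing property. Since $\Graph{f} \subseteq G$ and $\nondegenerate{\Graph{f}} = \nondegenerate{G}$, Lemma~\ref{lem:extension} transfers the $\p{1, 2}$-shadowing property from $\p{X, \Graph{f}}$ to $\p{X, G}$, establishing $\p{2}$.

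The argument is essentially bookkeeping once Theorem~\ref{thm:isometry-mahavier} and the transfer lemmas are in hand: the genuine difficulty---connecting shadowing of an isometry with total disconnectedness---is already resolved in Theorem~\ref{thm:isometry-mahavier}. The only points demanding care here are the verification of the legality and non-degeneracy equalities that license Lemma~\ref{lem:contraction} and Lemma~\ref{lem:extension}, together with the remark that single-valuedness of $\Graph{f}$ collapses its $\p{i, j}$-shadowing properties to the ordinary shadowing of $\p{X, f}$.
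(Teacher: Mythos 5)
Your proposal is correct and follows essentially the same route as the paper: Lemma~\ref{lem:contraction} transfers the $\p{2,1}$-shadowing property down to $\p{X, \Graph{f}}$, Theorem~\ref{thm:isometry-mahavier} links shadowing of the isometry with total disconnectedness, and Lemma~\ref{lem:extension} transfers the $\p{1,2}$-shadowing property back up to $\p{X, G}$. Your explicit verification that $\legal{\Graph{f}} = \legal{G} = X$ and $\nondegenerate{\Graph{f}} = \nondegenerate{G} = X$ (which the paper leaves implicit) is a welcome addition that confirms the transfer lemmas apply.
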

\begin{proof}
Suppose $\p{X, G}$ has the $\p{2, 1}$-shadowing property. By Lemma~\ref{lem:contraction}, $\p{X, \Graph{f}}$ has the $\p{2, 1}$-shadowing property, implying $\p{X, f}$ has the shadowing property. Thus, $X$ is totally disconnected by Theorem~\ref{thm:isometry-mahavier}, establishing $\p{1}$.

Suppose $X$ is totally disconnected. By Theorem~\ref{thm:isometry-mahavier}, $\p{X, f}$ has the shadowing property, implying $\p{X, \Graph{f}}$ has the $\p{1, 2}$-shadowing property. By Lemma~\ref{lem:extension}, it follows $\p{X, G}$ has the $\p{1, 2}$-shadowing property, yielding $\p{2}$. 
\end{proof}

We now note the implication from $\p{3}$ to $\p{4}$ in Theorem~\ref{thm:isometry-mahavier} holds for every CR-dynamical system. 

\begin{proposition}\label{prop:totally_disconnected-implies-Mahavier}
Let $\p{X, G}$ be a CR-dynamical system. If $X$ is totally disconnected, then $X_G^+$ is totally disconnected. 
\end{proposition}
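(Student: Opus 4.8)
The plan is to exhibit $X_G^+$ as a topological subspace of the full product $\prod_{i=0}^\infinity X$ and then invoke two standard facts from point-set topology: an arbitrary product of totally disconnected spaces is totally disconnected, and every subspace of a totally disconnected space is totally disconnected. Both facts are elementary, so the argument should be short.

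First I would show the product $\prod_{i=0}^\infinity X$ is totally disconnected. Let $C$ be the connected component of an arbitrary point of the product. For each $i \in \omega$, the projection $\pi_i : \prod_{i=0}^\infinity X \to X$ is continuous, so $\pi_i\of{C}$ is a connected subset of $X$. Since $X$ is totally disconnected, $\pi_i\of{C}$ is a singleton. As this holds in every coordinate, $C$ is contained in a single point of the product, and hence equals it; thus the connected components of $\prod_{i=0}^\infinity X$ are singletons.

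Next I would observe that $X_G^+ = \mahavier{\infinity}{G}$ is, by definition, a subset of $\prod_{i=0}^\infinity X$ carrying the subspace topology. Since whether a set is connected depends only on its own subspace topology, any connected subset of $X_G^+$ is in particular a connected subset of $\prod_{i=0}^\infinity X$, and is therefore a singleton by the previous step. Hence the connected components of $X_G^+$ are singletons, which is precisely to say that $X_G^+$ is totally disconnected.

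I do not anticipate any genuine obstacle here; the only point requiring mild care is to keep the two topologies straight (the product topology and the induced subspace topology on $\mahavier{\infinity}{G}$) and to note that the product fact holds for an arbitrary, in particular countable, index set. As an alternative to spelling out the projection argument, one could cite these two standard facts directly from a reference such as~\cite{engelking}.
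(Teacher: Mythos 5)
Your argument is correct and follows the same route as the paper: the paper's proof likewise observes that $X_G^+$ is a subspace of $\prod_{n=0}^\infinity X$, that the product of totally disconnected spaces is totally disconnected, and that this property passes to subspaces. You have simply spelled out the projection argument that the paper cites as a standard fact.
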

\begin{proof}
Observe $X_G^+$ is a closed subspace of $\prod_{n = 0}^\infinity X$, where the latter is totally disconnected (being the product of totally disconnected space $X$).  It follows $X_G^+$ is totally disconnected. 
\end{proof}

The implication from $\p{4}$ to $\p{3}$ in Theorem~\ref{thm:isometry-mahavier}, however, does not hold for every CR-dynamical system.  For consider the CR-dynamical system $\p{X, G}$, where $X = [0, 1]$ and $G = \set{\p{0, 0}, \p{0, 1}, \p{1, 0}, \p{1, 1}}$. Clearly, $X$ is not totally disconnected. However, $X_G^+$ is homeomorphic to the Cantor space, and is therefore totally disconnected. We now generalise the equivalence between $\p{3}$ and $\p{4}$ in Theorem~\ref{thm:isometry-mahavier}. 

\begin{theorem}\label{theorem:graph-contains-isometry-then-X-totally-disconnected-iff-Mahavier-totally-disconnected}
Let $\p{X, G}$ be a CR-dynamical system, such that there exists an isometry $f : X \to X$ with $\Graph{f} \subseteq G$. Then, $X$ is totally disconnected if, and only if, $X_G^+$ is totally disconnected.
\end{theorem}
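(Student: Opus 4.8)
The plan is to treat the two directions asymmetrically, since one is already available and the other is where the isometry hypothesis earns its keep. For the forward direction---$X$ totally disconnected implies $X_G^+$ totally disconnected---I would simply invoke Proposition~\ref{prop:totally_disconnected-implies-Mahavier}, which requires no hypothesis on $G$ whatsoever.

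For the converse, the key idea is that the continuous total self-map $f$ supplied by the hypothesis lets us embed $X$ topologically into $X_G^+$. Concretely, I would define $\iota : X \to X_G^+$ by $\iota\of{x} = \sequenceOf{f^n\of{x}}:{n \in \omega}$. First I would check this is well-defined: since $\Graph{f} \subseteq G$, each consecutive pair $\p{f^n\of{x}, f^{n+1}\of{x}}$ lies in $G$, so $\iota\of{x} \in X_G^+$. Next, $\iota$ is continuous because each coordinate map $x \mapsto f^n\of{x}$ is continuous (as $f$ is an isometry, hence continuous) and $X_G^+$ carries the product topology; and $\iota$ is injective because the zeroth coordinate recovers $x$.

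I would then appeal to the standard fact that a continuous injection from a compact space into a Hausdorff space is a homeomorphism onto its image; here $X$ is compact and $X_G^+$ is Hausdorff (a subspace of a product of Hausdorff spaces), so $\iota\of{X} \homeo X$. Finally, total disconnectedness is hereditary, so if $X_G^+$ is totally disconnected, then the subspace $\iota\of{X}$ is totally disconnected, whence $X \homeo \iota\of{X}$ is totally disconnected.

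The argument is short, and I expect the only conceptual point to be the recognition that the hypothesis is used only through the existence of a continuous \emph{total} self-map whose graph sits inside $G$---which is exactly what fails in the counterexample $G = \set{\p{0,0}, \p{0,1}, \p{1,0}, \p{1,1}}$ preceding the statement, where no continuous self-map of $[0,1]$ has graph contained in $G$. The isometry condition is thus stronger than strictly necessary for this direction, but since an isometry is in particular a continuous self-map, the embedding $\iota$ is available and the proof goes through without further work.
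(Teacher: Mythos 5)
Your proof is correct, but the converse direction takes a genuinely different route from the paper's. The paper argues by contradiction with a chain argument: it takes a non-trivial component $C$ of $X$, covers the totally disconnected space $X_G^+$ by finitely many pairwise disjoint clopen sets of small diameter, builds an $\epsilon$-chain in $C$ from $x$ to $y$, and observes that the points $\sequenceOf{f^k\of{x_n}}:{k \in \omega}$ are consecutively $\epsilon$-close in $\rho$ (here the isometry is used to compute $\rho\of{\iota\of{x_n}, \iota\of{x_{n+1}}} = d\of{x_n, x_{n+1}}$ exactly), so they all fall into one clopen set whose diameter is too small to accommodate $d\of{x, y}$. Your argument instead packages the same map $x \mapsto \sequenceOf{f^n\of{x}}:{n \in \omega}$ as a topological embedding of the compact space $X$ into the Hausdorff space $X_G^+$ and invokes heredity of total disconnectedness. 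This is shorter and cleaner, and it also exposes that the isometry hypothesis is not needed for this particular equivalence: any continuous self-map with graph contained in $G$ suffices, whereas the paper's chain computation genuinely leans on the isometry (or at least on equicontinuity of the iterates) to control all coordinates simultaneously. The paper's approach has the virtue of running parallel to the proof of the $\p{1} \Leftrightarrow \p{3}$ equivalence in Theorem~\ref{thm:isometry-mahavier}, but yours is the more economical and more general argument. Both directions of your proposal are sound; the forward direction is, as you say, exactly Proposition~\ref{prop:totally_disconnected-implies-Mahavier}.
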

\begin{proof}
The forward direction is proved in Proposition~\ref{prop:totally_disconnected-implies-Mahavier}. Conversely, suppose $X_G^+$ is totally disconnected. To derive a contradiction, suppose $X$ is not totally disconnected. Then, there is a non-trivial connected component $C$. Choose $\delta > 0$ to be less than the diameter of $C$, and $x \neq y \in C$ such that $d\of{x, y} > \delta$. As $X_G^+$ is a totally disconnected compact metric space, there is a finite cover $\collection{U}$ which consists of pairwise disjoint clopen sets, each with diameter less than our chosen $\delta$. Now, set $\eta := \min\setOf{d\of{U, V}}:{U \neq V \in \collection{U}}$, and let $\epsilon \in \p{0, \eta}$ be arbitrary. By~\cite[Exercise $4.23$ Part $\p{d}$]{nadler}, since $C$ is connected, there is a sequence $\sequenceOf{x_n}:{n \in \omega}$ and $m > 0$ such that $x_0 = x$, $x_k = y$ for each $k \geq m$, and $d\of{x_n, x_{n+1}} < \epsilon$ for each $n \in \omega$. For each $n \in \omega$, consider $\sequenceOf{f^k\of{x_n}}:{k \in \omega} \in X_G^+$. Indeed, for each $n \in \omega$,
\[
\sum_{k = 0}^\infinity \Fraction d\of{f^k\of{x_n}, f^k\of{x_{n+1}}} / {2^{k+1}} = \sum_{k=0}^\infinity \Fraction d\of{x_n, x_{n+1}} / {2^{k+1}} < \sum_{k=0}^\infinity \Fraction \epsilon / {2^{k+1}} = \epsilon. 
\]
It follows there exists $U \in \collection{U}$ such that $\sequenceOf{f^k\of{x_n}}:{k \in \omega} \in U$ for each $n \in \omega$.  However, the diameter of $U$ is less than $\delta$, implying 
\[
\delta < d\of{x, y} = \sum_{k = 0}^\infinity \Fraction d\of{x, y} / {2^{k+1}} = \sum_{k = 0}^\infinity \Fraction d\of{f^k\of{x}, f^k\of{y}} / {2^{k+1}} = \sum_{k = 0}^\infinity \Fraction d\of{f^k\of{x_0}, f^k\of{x_m}} / {2^{k+1}} \leq \delta,
\]
a contradiction.  Thus, $X$ is totally disconnected.  
\end{proof}

In the restricted setting of closed relations containing the graph of an isometry, we obtain the following generalisation of Proposition~\ref{prop:21-implies-diagonal}. 

\begin{proposition}\label{prop:21-isometry-implies-single-val}
Suppose $\p{X, G}$ is a CR-dynamical system, such that there exists an isometry $f : X \to X$ with $\Graph{f} \subseteq G$. If $\p{X, G}$ has the $\p{2, 1}$-shadowing property, then $G = \Graph{f}$. 
\end{proposition}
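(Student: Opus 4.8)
The plan is to argue by contradiction, adapting the proof of Proposition~\ref{prop:21-implies-diagonal}. There, the constant trajectory $\langle u, u, \ldots\rangle$ (available because $\Delta_X \subseteq G$) pinned a shadowing point close to two distinct successors; here the \emph{isometry trajectory} $\langle u, f\of{u}, f^{2}\of{u}, \ldots\rangle$ (available because $\Graph{f} \subseteq G$) will play exactly that role. So I would begin by supposing $\p{X, G}$ has the $\p{2, 1}$-shadowing property while $G \neq \Graph{f}$. Since $\Graph{f} \subseteq G$, this forces some $x_0 \in X$ to admit a point $y \in G\of{x_0}$ with $y \neq f\of{x_0}$; set $r = d\of{y, f\of{x_0}} > 0$.

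Next I would manufacture a genuine trajectory to serve as a pseudo-orbit. Because $\Graph{f} \subseteq G$, the sequence $\langle x_0, y, f\of{y}, f^{2}\of{y}, \ldots\rangle$ lies in $T_G^+\of{x_0}$: the first step is legal since $y \in G\of{x_0}$, and every later step $f^{k}\of{y} \to f^{k+1}\of{y}$ is legal since $\Graph{f} \subseteq G$. In particular each of its terms is non-degenerate, so it is a $\p{\delta, 2}$-pseudo-orbit for every $\delta > 0$. Fixing $\epsilon \in \p{0, r/2}$ (say $\epsilon = r/3$) and taking the $\delta$ furnished by the $\p{2, 1}$-shadowing property, I obtain a legal point $u$ that $\p{\epsilon, 1}$-shadows this trajectory.

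The crux is that $\p{\epsilon, 1}$-shadowing constrains \textbf{every} trajectory of $u$, and $u$ carries the isometry trajectory $\langle u, f\of{u}, f^{2}\of{u}, \ldots\rangle$ automatically. Comparing this trajectory with $\langle x_0, y, f\of{y}, \ldots\rangle$ at coordinates $0$ and $1$ gives $d\of{u, x_0} < \epsilon$ and $d\of{f\of{u}, y} < \epsilon$. Since $f$ is an isometry, $d\of{f\of{u}, f\of{x_0}} = d\of{u, x_0} < \epsilon$, so the triangle inequality yields $r = d\of{y, f\of{x_0}} \leq d\of{y, f\of{u}} + d\of{f\of{u}, f\of{x_0}} < 2\epsilon < r$, the desired contradiction. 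Hence $G = \Graph{f}$.

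The argument is genuinely short, so I do not anticipate a serious computational obstacle; the only points requiring care are (i) verifying that the constructed sequence legitimately lies in $\nondegenerate{G}$ so that it qualifies as a pseudo-orbit, and (ii) recognising that only the $0$th and $1$st coordinates are used, together with the fact that it is $\p{\cdot, 1}$-shadowing (not $\p{\cdot, 2}$) that makes the canonical isometry trajectory usable. This last point is precisely where the hypothesis $j = 1$ is indispensable, just as in Proposition~\ref{prop:21-implies-diagonal}.
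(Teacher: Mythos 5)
Your proof is correct and follows essentially the same strategy as the paper: feed the genuine trajectory $\sequence{x_0, y, f\of{y}, f^2\of{y}, \ldots}$ to the $\p{2,1}$-shadowing property and use the fact that the shadowing point's canonical isometry trajectory must track it, obtaining $d\of{f\of{x_0}, y} < 2\epsilon$ exactly as the paper does. The only (harmless) difference is that you compare $y$ directly with $f\of{x_0}$ using a single pseudo-orbit and a fixed $\epsilon < r/2$, whereas the paper runs two pseudo-orbits to show any two points of $G\of{x}$ coincide; both yield $G\of{x} = \set{f\of{x}}$ for every $x$.
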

\begin{proof}
To derive a contradiction, suppose there exists $y \neq z \in G\of{x}$ for some $x \in X$. Let $\epsilon > 0$. There exists $\delta > 0$ satisfying $\p{2, 1}$-shadowing property on $\p{X, G}$. Observe $\sequence{x, y, f\of{y}, f^2\of{y}, \ldots}$ and $\sequence{x, z, f\of{z}, f^2\of{z}, \ldots}$ are $\p{\delta, 2}$-pseudo-orbits in $\p{X, G}$. Hence, there exists $u, v \in X$ which $\p{\epsilon, 1}$-shadow $\sequence{x, y, f\of{y}, f^2\of{y}, \ldots}$ and $\sequence{x, z, f\of{z}, f^2\of{z}, \ldots}$, respectively.  In particular, $d\of{x, u} < \epsilon$ and $d\of{y, f\of{u}} < \epsilon$, implying 
\[
d\of{f\of{x}, y} \leq  d\of{f\of{x}, f\of{u}} + d\of{y, f\of{u}}  =  d\of{x, u} + d\of{y, f\of{u}} < 2\epsilon.
\]
Similarly, one obtains $d\of{f\of{x}, z} < 2\epsilon$. However, this implies 
\[
d\of{y, z} \leq d\of{f\of{x}, y} + d\of{f\of{x}, z} < 4\epsilon. 
\]
Since $\epsilon > 0$ is arbitrary, we must have $y = z$, which is a contradiction. Thus, $G\of{x}$ is a singleton for each $x \in X$, forcing $G = \Graph{f}$ (as desired).  
\end{proof}

By Theorem~\ref{thm:isometry-mahavier} and the above, it clearly follows that if $\p{X, G}$ has the $\p{2, 1}$-shadowing property and there exists an isometry $f : X \to X$ with $\Graph{f} \subseteq G$, then $\p{X_G^+, \sigma_G^+}$ has the shadowing property. 

We leave the following open question; note we are particularly interested in adding assumptions to $\p{X, G}$ having the $\p{2, 1}$-shadowing property, that doesn't force $G$ to be the graph of a single-valued function. 

\begin{question}
If $\p{X, G}$ has the $\p{2, 1}$-shadowing property, must $\p{X_G^+, \sigma_G^+}$ have the shadowing property? 
\end{question}

We find the converse is not true. To do so, we firstly make the following observation, when we restrict $\legal{G}$ to be finite.  

\begin{lemma}\label{lem:legal-finite-implies-mahavier-shadowing}
Let $\p{X, G}$ be a CR-dynamical system. If $\legal{G}$ is finite, then $\p{X_G^+, \sigma_G^+}$ has the shadowing property. 
\end{lemma}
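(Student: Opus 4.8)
The plan is to exploit the fact that every point of $X_G^+$ is a sequence valued in $\legal{G}$. Indeed, if $\sequenceOf{x_k}:{k \in \omega} \in X_G^+$, then for each $k$ the tail $\sequenceOf{x_{k+m}}:{m \in \omega}$ again lies in $X_G^+$ and witnesses $x_k \in \legal{G}$. Since $\legal{G}$ is finite, $\p{X_G^+, \sigma_G^+}$ is, up to the metric $\rho$, a one-sided subshift of finite type over the finite alphabet $\legal{G}$, with the admissible transitions recorded by $G$. If $\legal{G}$ is a singleton then $X_G^+$ is a single point and the conclusion is trivial, so I may assume $\gamma := \min\setOf{d\of{x, y}}:{x \neq y \in \legal{G}}$ is well-defined and strictly positive.

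First I would fix $\epsilon > 0$ and choose $N \in \omega$ with $\operatorname{diam}\of{X}/2^{N+1} < \epsilon$; this controls the tail $\sum_{j > N}$ of the series defining $\rho$. I then take $\delta$ to be any positive number below $\gamma/2^{N+1}$. The point of this choice is that, for a $\delta$-pseudo-orbit $\sequenceOf{\mathbf{x}^{(n)}}:{n \in \omega}$ in $X_G^+$ (writing $\mathbf{x}^{(n)} = \sequenceOf{x^{(n)}_k}:{k \in \omega}$), the inequality $\rho\of{\sigma_G^+\of{\mathbf{x}^{(n)}}, \mathbf{x}^{(n+1)}} \leq \delta$ forces each individual coordinate term to be small: for $k \leq N$ the $k$-th summand yields $d\of{x^{(n)}_{k+1}, x^{(n+1)}_k} \leq 2^{k+1}\delta < \gamma$, and since both entries lie in $\legal{G}$ this is only possible if $x^{(n)}_{k+1} = x^{(n+1)}_k$. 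Thus successive pseudo-orbit points overlap exactly on their first $N+1$ coordinates.

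The shadowing point will be the diagonal $\mathbf{y} := \sequenceOf{x^{(n)}_0}:{n \in \omega}$, rather than $\mathbf{x}^{(0)}$ (the latter does not work, as a pseudo-orbit may drift over time while keeping its local errors small). Using the overlap relation at coordinate $0$, namely $x^{(n+1)}_0 = x^{(n)}_1$, together with $\p{x^{(n)}_0, x^{(n)}_1} \in G$, I get $\p{y_n, y_{n+1}} = \p{x^{(n)}_0, x^{(n)}_1} \in G$, so $\mathbf{y} \in X_G^+$. To check that $\mathbf{y}$ shadows, I iterate the overlap relations: for $0 \leq j \leq N$ the chain
\[
x^{(n+j)}_0 = x^{(n+j-1)}_1 = x^{(n+j-2)}_2 = \cdots = x^{(n)}_j
\]
uses the overlaps at coordinates $0, 1, \ldots, j-1$ (all $\leq N$, hence valid), and shows that $\p{\sigma_G^+}^n\of{\mathbf{y}}$ and $\mathbf{x}^{(n)}$ agree in coordinates $0$ through $N$. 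Consequently $\rho\of{\p{\sigma_G^+}^n\of{\mathbf{y}}, \mathbf{x}^{(n)}} \leq \operatorname{diam}\of{X}/2^{N+1} < \epsilon$ for every $n$, as required.

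The routine parts are the two $\rho$-estimates (the tail bound and the coordinate-forcing bound); the step requiring care is the index bookkeeping in the displayed chain, and the realization that the correct shadowing point is the diagonal sequence $\sequenceOf{x^{(n)}_0}:{n \in \omega}$ rather than the initial pseudo-orbit point. I expect that to be the main conceptual obstacle, with everything else reducing to a finite-alphabet counting argument.
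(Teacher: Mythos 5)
Your proof is correct, but it takes a genuinely different route from the paper. The paper observes that, since $\legal{G}$ is finite and every coordinate of a point of $X_G^+$ lies in $\legal{G}$, the system $\p{X_G^+, \sigma_G^+}$ is a one-sided shift space over the finite alphabet $\legal{G}$; it then exhibits the finite forbidden-word set $\setOf{xy}:{\p{x,y} \notin G,\ x, y \in \legal{G}}$ to conclude that $X_G^+$ is a shift of finite type, and invokes the classical theorem (Walters) that a shift space has the shadowing property if and only if it is of finite type. You instead unfold that black box: your choice of $N$ with $\operatorname{diam}\of{X}/2^{N+1} < \epsilon$ and $\delta < \gamma/2^{N+1}$, the coordinate-forcing observation that $d\of{x^{(n)}_{k+1}, x^{(n+1)}_k} \leq 2^{k+1}\delta < \gamma$ collapses to equality inside the finite set $\legal{G}$ for $k \leq N$, and the diagonal shadowing point $\sequenceOf{x^{(n)}_0}:{n \in \omega}$ together constitute exactly the standard direct proof that a two-block SFT has shadowing, specialized to this setting. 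The index bookkeeping in your telescoping chain is right (each overlap used is at a coordinate $i \leq j-1 \leq N-1$), the verification that the diagonal sequence lies in $X_G^+$ is the step that actually uses the SFT structure, and your remark that $\mathbf{x}^{(0)}$ itself cannot serve as the shadowing point is well taken. What each approach buys: the paper's is two lines modulo a citation, while yours is self-contained and makes the quantitative dependence of $\delta$ on $\epsilon$ explicit. One small caveat: you phrase everything in terms of a metric $d$ on $X$, whereas the lemma is stated for compact Hausdorff $X$; this is harmless because $\legal{G}$ is finite and discrete, so $\gamma$ can be replaced by an entourage separating distinct legal points (and $X_G^+ \subseteq \legal{G}^\omega$ is metrizable in any case), but it is worth flagging if you want the argument to match the stated generality.
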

\begin{proof}
Note we view $X_G^+$ as a subspace of $\legal{G}^\omega$. Since $\legal{G}$ is finite, it follows $\p{X_G^+, \sigma_G^+}$ is a shift space (refer to~\cite[Section $2$]{shift_finite_type}). It is known that shift spaces have the shadowing property if, and only if, it is a shift space of finite type~\cite{walters}. Therefore, it suffices to show $\p{X_G^+, \sigma_G^+}$ is of finite type. To this end, let
\[
\mathcal{F} = \setOf{xy}:{\p{x, y} \in \p{\p{X \times X} \setMinus G} \intersect \p{\legal{G} \times \legal{G}}},
\]
which is a collection of finite words on $\legal{G}$. Notice $\mathcal{F}$ is finite because $\legal{G}$ is finite. Now, $\sequenceOf{x_n}:{n \in \omega} \in X_G^+$ if, and only if, $\p{x_n, x_{n+1}} \in G$ for each $n \in \omega$. Indeed, this implies that if $\sequenceOf{x_n}:{n \in \omega} \in X_G^+$, then $x_n x_{n+1} \notin \mathcal{F}$ for each $n \in \omega$. Conversely, if $\sequenceOf{x_n}:{n \in \omega} \in \legal{G}^\omega$ and $x_n x_{n+1} \notin \mathcal{F}$ for each $n \in \omega$, it follows $\p{x_n, x_{n+1}} \in G$ for each $n \in \omega$ and thus that $\sequenceOf{x_n}:{n \in \omega} \in X_G^+$. Hence, $\p{X_G^+, \sigma_G^+}$ is of finite type, and consequently has the shadowing property. 
\end{proof}

\begin{example}
We give an example of a CR-dynamical system $\p{X, G}$ which does not have the $\p{2, 1}$-shadowing property, yet $\p{X_G^+, \sigma_G^+}$ has the shadowing property. 

Let $X = \set{0, 1}$ and $G = X \times X$. Observe $X_G^+$ is the Cantor space. By Proposition~\ref{prop:dom-G-finite-21}, $\p{X, G}$ does not have the $\p{2, 1}$-shadowing property. By Lemma~\ref{lem:legal-finite-implies-mahavier-shadowing}, $\p{X_G^+, \sigma_G^+}$ has the shadowing property. 
\end{example}



We now generalise the equivalence between $\p{1}$ and $\p{2}$ in Theorem~\ref{thm:isometry-mahavier}. 

\begin{theorem}
Let $\p{X, G}$ be a CR-dynamical system, such that there exists an isometry $f : X \to X$ with $\Graph{f} \subseteq G$. Then, 
\begin{enumerate}
\item if $\p{X, G}$ has the $\p{2, 1}$-shadowing property, then $\p{X_G^+, \sigma_G^+}$ has the shadowing property;
\item if $\p{X_G^+, \sigma_G^+}$ has the shadowing property, then $\p{X, G}$ has the $\p{1, 2}$-shadowing property. 
\end{enumerate}
\end{theorem}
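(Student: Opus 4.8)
The plan is to treat the two implications separately, each time reducing to results established just above. For $\p{1}$ --- essentially the remark recorded after Proposition~\ref{prop:21-isometry-implies-single-val} --- I would argue as follows. If $\p{X, G}$ has the $\p{2, 1}$-shadowing property, then Proposition~\ref{prop:implication} gives that it has all four $\p{i, j}$-shadowing properties, while Proposition~\ref{prop:21-isometry-implies-single-val} forces $G = \Graph{f}$. Since $G$ is now the graph of $f$, Proposition~\ref{prop:topological-relation-correspondence} converts the $\p{i, j}$-shadowing properties of $\p{X, \Graph{f}}$ into the ordinary shadowing property of $\p{X, f}$. As $f$ is an isometry, the equivalence $\p{1} \Leftrightarrow \p{2}$ of Theorem~\ref{thm:isometry-mahavier} then yields that $\p{X_\Graph{f}^+, \sigma_\Graph{f}^+} = \p{X_G^+, \sigma_G^+}$ has the shadowing property.

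For $\p{2}$, the idea is to lift a $\p{\delta, 1}$-pseudo-orbit of $\p{X, G}$ to a genuine $\delta$-pseudo-orbit of the Mahavier shift, shadow it there, and read the required trajectory off the shadowing point. First I would fix $\epsilon > 0$ and apply the shadowing property of $\p{X_G^+, \sigma_G^+}$ at scale $\Fraction \epsilon / 2$ to obtain $\delta > 0$. Given a $\p{\delta, 1}$-pseudo-orbit $\sequenceOf{x_n}:{n \in \omega}$, since $\Graph{f} \subseteq G$ we have $f\of{x_n} \in G\of{x_n}$, so the $\p{\delta, 1}$ condition yields $d\of{x_{n+1}, f\of{x_n}} \leq \delta$. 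For each $n$ I then form $\mathbf{x}_n = \sequenceOf{f^k\of{x_n}}:{k \in \omega}$, which lies in $X_G^+$ precisely because $\Graph{f} \subseteq G$.

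The key computation is that $\sequenceOf{\mathbf{x}_n}:{n \in \omega}$ is a $\delta$-pseudo-orbit for $\sigma_G^+$: since $f$ is an isometry, $d\of{f^{k+1}\of{x_n}, f^k\of{x_{n+1}}} = d\of{f\of{x_n}, x_{n+1}}$ for every $k$, and the telescoping sum collapses,
\[
\rho\of{\sigma_G^+\of{\mathbf{x}_n}, \mathbf{x}_{n+1}} = \sum_{k=0}^{\infinity} \Fraction d\of{f\of{x_n}, x_{n+1}} / {2^{k+1}} = d\of{f\of{x_n}, x_{n+1}} \leq \delta.
\]
Shadowing in $X_G^+$ then produces $\mathbf{y} = \sequenceOf{y_k}:{k \in \omega} \in X_G^+$ with $\rho\of{\p{\sigma_G^+}^n\of{\mathbf{y}}, \mathbf{x}_n} < \Fraction \epsilon / 2$ for all $n$. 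Being an element of $X_G^+$, $\mathbf{y}$ is a trajectory of $y_0$, so $y_0 \in \legal{G}$; and comparing the $k = 0$ coordinates gives $\Fraction d\of{y_n, x_n} / 2 \leq \rho\of{\p{\sigma_G^+}^n\of{\mathbf{y}}, \mathbf{x}_n} < \Fraction \epsilon / 2$, hence $d\of{x_n, y_n} < \epsilon$ for each $n$. Thus the trajectory $\sequenceOf{y_n}:{n \in \omega} \in T_G^+\of{y_0}$ witnesses that $y_0$ $\p{\epsilon, 2}$-shadows $\sequenceOf{x_n}:{n \in \omega}$, establishing $\p{2}$.

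The main point to get right, rather than a genuine obstacle, is the bookkeeping around $X_G^+$: one must check that the $f$-orbits $\mathbf{x}_n$ genuinely lie in $X_G^+$ (exactly where $\Graph{f} \subseteq G$ is used) and that the single lifted point $\mathbf{y}$ simultaneously supplies a legal base point $y_0$ and the trajectory needed for $\p{\epsilon, 2}$-shadowing. The isometry hypothesis is indispensable in the displayed line, since it is what forces every summand of the $\rho$-sum to equal $d\of{f\of{x_n}, x_{n+1}}$; without it the sum need not reduce to a single controllable term.
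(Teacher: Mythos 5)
Your proposal is correct and follows essentially the same route as the paper: part $\p{1}$ via Proposition~\ref{prop:21-isometry-implies-single-val} to force $G = \Graph{f}$ and then Theorem~\ref{thm:isometry-mahavier}, and part $\p{2}$ by lifting the $\p{\delta, 1}$-pseudo-orbit to the $f$-orbit sequence $\sequenceOf{f^k\of{x_n}}:{k \in \omega}$ in $X_G^+$, using the isometry to collapse the $\rho$-sum, and reading off the estimate $d\of{x_n, y_n} < \epsilon$ from the $k = 0$ coordinate. The only cosmetic difference is that you make explicit the intermediate appeals to Propositions~\ref{prop:implication} and~\ref{prop:topological-relation-correspondence} in part $\p{1}$, which the paper leaves implicit.
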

\begin{proof}
If $\p{X, G}$ has the $\p{2, 1}$-shadowing property, then $G = \Graph{f}$ by Proposition~\ref{prop:21-isometry-implies-single-val}. It follows $\p{X, f}$ has the shadowing property, and thus $\p{X_G^+, \sigma_G^+} = \p{X_\Graph{f}^+, \sigma_\Graph{f}^+}$ has the shadowing property by Theorem~\ref{thm:isometry-mahavier}.  

We now prove $\p{2}$. To this end, suppose $\p{X_G^+, \sigma_G^+}$ has the shadowing property.  Let $\epsilon > 0$. There exists $\delta > 0$ satisfying the shadowing property for $\p{X_G^+, \sigma_G^+}$, such that every $\delta$-pseudo-orbit is $\Fraction \epsilon / 2$-shadowed. Suppose $\sequenceOf{x_n}:{n \in \omega}$ is a $\p{\delta, 1}$-pseudo-orbit in $\p{X, G}$. Then, for each $n \in \omega$, $d\of{z, x_{n+1}} \leq \delta$ for every $z \in G\of{x_n}$. In particular, $d\of{f\of{x_n}, x_{n+1}} \leq \delta$ for each $n \in \omega$. Observe $\sequenceOf{\p{f^k\of{x_n}}_{k \in \omega}}:{n \in \omega}$ is a $\delta$-pseudo-orbit in $\p{X_G^+, \sigma_G^+}$, since
\begin{align*}
&&  \rho\of{\sigma_G^+\of{\sequence{x_n, f\of{x_n}, \ldots}}, \sequence{x_{n+1}, f\of{x_{n+1}}, \ldots}} &= \sum_{k = 0}^\infinity \Fraction d\of{f^{k+1}\of{x_n}, f^k\of{x_{n+1}}} / {2^{k+1}} && \\
&& &= \sum_{k=0}^\infinity \Fraction d\of{f\of{x_n}, x_{n+1}} / {2^{k+1}} && \\
&& &\leq \sum_{k=0}^\infinity \Fraction \delta / {2^{k+1}} && \\
&&  &= \delta &&
\end{align*}
for each $n \in \omega$. Hence, there exists $\sequenceOf{y_n}:{n \in \omega} \in X_G^+$ which $\Fraction \epsilon / 2$-shadows our $\delta$-pseudo-orbit, i.e., 
\[
\sum_{k=0}^\infinity \Fraction d\of{f^k\of{x_n}, y_{n+k}} / {2^{k+1}} < \Fraction \epsilon / 2
\]
for each $n \in \omega$. It follows 
\[
d\of{x_n, y_n} < \epsilon
\]
for each $n \in \omega$, implying $y_0$ $\p{\epsilon, 2}$-shadows $\sequenceOf{x_n}:{n \in \omega}$. Thus, $\p{X, G}$ has the $\p{1, 2}$-shadowing property, and we are done. 
\end{proof}

We now show the equivalence between $\p{2}$ and $\p{4}$ in Theorem~\ref{thm:isometry-mahavier} does not generalise.

\begin{example}
Let $X = [0, 1]$ and $G = X \times X$. Notice $G$ contains the graph of each isometry on $X$, for which one example is the diagonal $\Delta_X$.  We show $\p{X_G^+, \sigma_G^+}$ has the shadowing property. Indeed, $X_G^+$ is not totally disconnected by Theorem~\ref{theorem:graph-contains-isometry-then-X-totally-disconnected-iff-Mahavier-totally-disconnected}. This tells us the implication from $\p{2}$ to $\p{4}$ in Theorem~\ref{thm:isometry-mahavier} does not generalise.  Now, let $\epsilon > 0$ be arbitrary and let $\delta \in \p{0, \epsilon}$. Suppose $\sequenceOf{\p{x_{n, k}}_{k \in \omega}}:{n \in \omega}$ is a $\delta$-pseudo-orbit in $\p{X_G^+, \sigma_G^+}$. That is to say, 
\[
\sum_{k = 0}^\infinity \Fraction d\of{x_{n, k + 1}, x_{n + 1, k}} / {2^{k+1}} \leq \delta
\]
for each $n \in \omega$. Let $y_n := x_{n, 0}$ for each $n \in \omega$. We claim $\sequenceOf{y_n}:{n \in \omega}$ $\epsilon$-shadows $\sequenceOf{\p{x_{n, k}}_{k \in \omega}}:{n \in \omega}$. That is to say, we claim
\[
\sum_{k = 0}^\infinity \Fraction d\of{x_{n, k}, x_{n+k, 0}} / {2^{k+1}} = \sum_{k = 0}^\infinity \Fraction d\of{x_{n, k}, y_{n + k}} / {2^{k+1}} < \epsilon
\]
for each $n \in \omega$. First, we observe using the triangle inequality
\[
d\of{x_{n, k}, x_{n+k, 0}} \leq \sum_{j = 0}^{k - 1} d\of{x_{n + j, k - j}, x_{n + j + 1, k - j - 1}}
\]
for each $n, k \in \omega$. Hence, for each $n \in \omega$,
\begin{align*}
&& \sum_{k = 0}^\infinity \Fraction d\of{x_{n, k}, x_{n+k, 0}} / {2^{k+1}} &\leq \sum_{k = 0}^\infinity \sum_{j = 0}^{k - 1} \Fraction  d\of{x_{n + j, k - j}, x_{n + j + 1, k - j - 1}} /  {2^{k+1}} && \\
&& &= \sum_{j = 0}^\infinity \sum_{k = j + 1}^\infinity \Fraction  d\of{x_{n + j, k - j}, x_{n + j + 1, k - j - 1}} /  {2^{k+1}}  && \\
&& &= \sum_{j = 0}^\infinity \Fraction 1 / {2^{j + 1}} \sum_{k = j + 1}^\infinity \Fraction  d\of{x_{n + j, k - j}, x_{n + j + 1, k - j - 1}} /  {2^{k - j}} && \\
&& &= \sum_{j = 0}^\infinity \Fraction 1 / {2^{j + 1}} \sum_{m = 0}^\infinity \Fraction  d\of{x_{n + j, m + 1}, x_{n + j + 1, m}} /  {2^{m+1}}  && \\
&& &\leq \sum_{j = 0}^\infinity \Fraction \delta / {2^{j+1}} && \\
&& &= \delta && \\
&& &< \epsilon, &&
\end{align*}
establishing our claim. Thus, $\p{X_G^+, \sigma_G^+}$ has the shadowing property. 
\end{example}

\begin{example}
Let $\p{X, G}$ be the CR-dynamical system for $\p{5}$ in Proposition~\ref{prop:id-shadowing-diagonal-totally_disconnected-counterexamples}. That is, $X$ is the Cantor set, and $G = \Delta_X \union \setOf{\p{a_n, a_{n+1}}}:{n \in \omega} \union \setOf{\p{b_n, b_{n+1}}}:{n \in \integers}$, where $\sequenceOf{a_n}:{n \in \omega}$ and $\sequenceOf{b_n}:{n \in \integers}$ are sequences in $X$ with the properties,
\begin{itemize}
    \item for each $\p{n, m} \in \omega \times \integers$, $a_n < a_{n+1}$ and $b_m < b_{m+1}$;
    \item $a_0 = 0$, $\lim_{n \to +\infinity} a_n = \Fraction 2 / 9 = \lim_{n\to -\infinity} b_n$ and $\lim_{n \to +\infinity} b_n = \Fraction 1 / 3$. 
\end{itemize}
Notice $G$ contains the graph of the identity map on $X$, which is an isometry. By Theorem~\ref{theorem:graph-contains-isometry-then-X-totally-disconnected-iff-Mahavier-totally-disconnected}, $X_G^+$ is totally disconnected. We show $\p{X_G^+, \sigma_G^+}$ does not have the shadowing property. This tells us the implication from $\p{4}$ to $\p{2}$ in Theorem~\ref{thm:isometry-mahavier} does not generalise.
 
Now, let $\epsilon = \Fraction 1 / {54}$, and $\delta > 0$ be arbitrary. There exists $\p{n, m} \in \omega \times \integers$ such that $\sequence{a_0, \ldots, a_n, b_m, b_{m+1}, \ldots}$ is a $\p{\delta, 2}$-pseudo-orbit in $\p{X, G}$.  It follows 
\[
\sequence{\p{a_0, a_0, \ldots}, \ldots, \p{a_n, a_n, \ldots}, \p{b_m, b_m, \ldots}, \p{b_{m+1}, b_{m+1}, \ldots}, \ldots}
\]
is a $\delta$-pseudo-orbit in  $\p{X_G^+, \sigma_G^+}$. However, it is not $\epsilon$-shadowed by any point in $X_G^+$: to derive a contradiction, suppose $\sequenceOf{y_N}:{N \in \omega}$ $\epsilon$-shadows our $\delta$-pseudo-orbit. Then, 
\[
\sum_{k = 0}^\infinity \Fraction d\of{0, y_k} / {2^{k+1}} < \Fraction 1 / {54},
\]
implying $d\of{0, y_0} < \Fraction 1 / {27}$. Therefore, $y_N \subseteq [0, \Fraction 2 / 9] \intersect X$ for each $N \in \omega$. Since $\lim_{k\to +\infinity} b_k = \Fraction 1 / 3$, there exists $\ell \geq m$, such that $d\of{\Fraction 1 / 3, b_\ell} < \Fraction 1 / {27}$. We must have $d\of{b_\ell, y_{n + \ell - m + 1}} < \Fraction 1 / {27}$, since
\[
\sum_{k = 0}^\infinity \Fraction d\of{b_\ell, y_{n + \ell - m + 1 + k}} / {2^{k+1}} < \Fraction 1 / {54}. 
\]
But then
\[
\Fraction 1 / 3 = d\of{\Fraction 1 / 3, 0} \leq d\of{\Fraction 1 / 3, b_{\ell}} + d\of{b_\ell, y_{n + \ell - m + 1}} + d\of{y_{n + \ell - m + 1}, 0} < \Fraction 1 / {27} + \Fraction 1 / {27} + \Fraction 2 / {9} = \Fraction 8 / {27}, 
\]
a contradiction. Thus, $\p{X_G^+, \sigma_G^+}$ does not have the shadowing property. 
\end{example}

We conclude this section with the following example. 

\begin{example}
Let $X = [0, 1]$ and $G = \p{X \times \set{0}} \union \Delta_X$. Recall $\p{X, G}$ has the $\p{1, 1}$-shadowing property. We show $\p{X_G^+, \sigma_G^+}$ does not have the shadowing property. To this end, let $\epsilon = \Fraction 1 / 4$, and $\delta > 0$ be arbitrary.  Choose $n \in \omega$ such that $\Fraction 1 / n \leq \delta$.  Consider the sequence $\sequenceOf{\bm{x}_k}:{k \in \omega}$, where $\bm{x}_k = \p{\Fraction k/ n, \Fraction k / n, \ldots}$ for each $k \in [n]$ and $\bm{x}_k = \p{1, 1, \ldots}$ for each $k \geq n$. Observe that for each $m \in \omega$, 
\[
\rho\of{\sigma_G^+\of{\bm{x}_m}, \bm{x}_{m+1}} = \sum_{k=0}^\infinity \Fraction d\of{\pi_{k+1}\of{\bm{x}_m}, \pi_k\of{\bm{x}_{m+1}}} / {2^{k+1}} \leq \Fraction 1 / n \leq \delta. 
\]
Hence, $\sequenceOf{\bm{x}_k}:{k \in \omega}$ is a $\delta$-pseudo-orbit.  To derive a contradiction, suppose there exists $\bm{y} = \p{y_0, y_1, \ldots} \in X_G^+$ which $\epsilon$-shadows  $\sequenceOf{\bm{x}_k}:{k \in \omega}$.  It follows $\rho\of{\bm{x}_0, \bm{y}} < \Fraction 1 / 4$. Therefore, $\Fraction d\of{0, y_0} / 2 < \Fraction 1 / 4$. On the other hand, $\rho\of{\bm{x}_n, \sigma_G^{+n}\of{\bm{y}}} < \Fraction 1 / 4$, and so clearly we must have $\Fraction d\of{1, y_0} / 2 < \Fraction 1 / 4$, since $y_{n} = y_0$ (else $y_n = 0$, but $\Fraction d\of{1, 0} / 2 \geq \Fraction 1 / 4$). However, by the triangle inequality, we obtain 
\[
1 = d\of{1, 0} \leq d\of{1, y_0} + d\of{y_0, 0} < 1,
\]
a contradiction. Thus, $\p{X_G^+, \sigma_G^+}$ does not have the shadowing property, and we are done. 
\end{example}

\section{Shadowing in $G^{-1}$}\label{section:inverse}

It is natural to ask whether $\p{X, G^{-1}}$ has shadowing if $\p{X, G}$ does. In this section, we briefly explore this question, making use of our previous work.

\begin{corollary}\label{cor:finite-G-inverse}
Let $\p{X, G}$ be a CR-dynamical system with $G$ finite. Then $\p{X, G^{-1}}$ has the $\p{1, 1}$-shadowing property, $\p{2, 2}$-shadowing property, and $\p{1, 2}$-shadowing property. 
\end{corollary}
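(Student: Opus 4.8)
The plan is to derive this as an immediate consequence of Proposition~\ref{prop:dom-G-finite}, which already supplies all three listed shadowing properties for any CR-dynamical system whose set of non-degenerate points is finite. Thus the only task is to check that $\p{X, G^{-1}}$ is a CR-dynamical system (in the sense used throughout the paper) with $\nondegenerate{G^{-1}}$ finite, after which Proposition~\ref{prop:dom-G-finite} applies verbatim to $\p{X, G^{-1}}$.

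First I would note that the coordinate-swap map $\p{x, y} \mapsto \p{y, x}$ is a bijection from $G$ onto $G^{-1}$, so $G^{-1}$ is finite whenever $G$ is. A finite subset of the Hausdorff space $X \times X$ is closed, and $G^{-1}$ is non-empty because $G$ is, so $G^{-1}$ is a non-empty closed relation; as already observed in Section~\ref{section:uniform-spaces}, $\p{X, G^{-1}}$ is then a CR-dynamical system. Moreover $\nondegenerate{G^{-1}}$ is contained in the (finite) domain of $G^{-1}$, and hence is itself finite.

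The one point deserving care is that $\p{X, G^{-1}}$ should satisfy the standing convention $\mahavier{\infinity}{G^{-1}} \neq \emptySet$, so that Proposition~\ref{prop:dom-G-finite} is being invoked legitimately. I would verify this by a short pigeonhole argument: since $G$ is finite and $\mahavier{\infinity}{G} \neq \emptySet$, any infinite trajectory of $\p{X, G}$ must eventually repeat a point, yielding a cycle $x_0, x_1, \ldots, x_k = x_0$ in $G$; reversing this cycle produces a cycle in $G^{-1}$, and iterating it indefinitely gives an element of $\mahavier{\infinity}{G^{-1}}$. With the hypotheses of Proposition~\ref{prop:dom-G-finite} confirmed for $\p{X, G^{-1}}$, the three shadowing properties follow at once. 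I do not expect any genuine obstacle here; the substance is entirely contained in Proposition~\ref{prop:dom-G-finite}, and the corollary is a direct specialisation of it to the inverse relation.
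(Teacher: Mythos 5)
Your proposal is correct and follows the same route as the paper: observe that $G^{-1}$ is finite (hence $\nondegenerate{G^{-1}}$ is finite) and apply Proposition~\ref{prop:dom-G-finite} to $\p{X, G^{-1}}$. Your extra pigeonhole check that $\mahavier{\infinity}{G^{-1}} \neq \emptySet$ is a sound verification of the paper's standing convention that the paper itself leaves implicit, but it does not change the substance of the argument.
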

\begin{proof}
Since $G$ is finite, $G^{-1}$ is finite. We then apply Proposition~\ref{prop:dom-G-finite} to yield the result. 
\end{proof}

\begin{example}
Let $\p{X, G}$ be the CR-dynamical system with $X = \set{-1, 0, 1}$ and $G = \set{\p{1, 1}, \p{1, 0}, \p{-1, 0}, \p{-1, -1}}$. Observe each legal point of $\p{X, G}$ has exactly one trajectory, and thus $\p{X, G}$ has the $\p{2, 1}$-shadowing property by Proposition~\ref{prop:dom-G-finite-21}. However,  $G^{-1} = \set{\p{1, 1}, \p{0, 1}, \p{0, -1}, \p{-1, -1}}$, and so $0$ has two trajectories in $\p{X, G^{-1}}$. By Proposition~\ref{prop:dom-G-finite-21}, $\p{X, G^{-1}}$ does not have the $\p{2, 1}$-shadowing property, and therefore Corollary~\ref{cor:finite-G-inverse} does not hold for the $\p{2, 1}$-shadowing property. 
\end{example}

\begin{example}
Let $\p{X, G}$ be the CR-dynamical system with $X = [0, 1]$ and $G = \set{\p{0, 0}, \p{0, 1}} \union \p{\set{1} \times X}$. Then, $\nondegenerate{G}$ is finite. We show $\p{X, G^{-1}}$ does not have the $\p{1, 1}$-shadowing property, and thus we cannot replace the assumption $G$ is finite with $\nondegenerate{G}$ is finite in Corollary~\ref{cor:finite-G-inverse} for this case. Observe $G^{-1} = \set{\p{0, 0}, \p{1, 0}} \union \p{X \times \set{1}}$. Let $\epsilon = \Fraction 1 / 2$, and $\delta > 0$ be arbitrary. There exists integer $n \geq 2$ such that $\Fraction 1 / n \leq \delta$. We have $\sequence{1 - \Fraction 1/ n, 1 - \Fraction 1 / n, \ldots}$ is a $\p{\delta, 1}$-pseudo-orbit in $\p{X, G^{-1}}$. However, it is not $\p{\epsilon, 1}$-shadowed by any point, since for each $x \in X$, there is a trajectory of $x$ which contains $0$ (and we have $d\of{1 - \Fraction 1 / n, 0} \geq \Fraction 1 / 2$). 
\end{example}

\begin{proposition}
Let $\p{X, G}$ be a CR-dynamical system.  If $\nondegenerate{G}$ is finite, then $\p{X, G^{-1}}$ has the $\p{2, 2}$-shadowing property. 
\end{proposition}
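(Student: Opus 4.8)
The plan is to prove the $\p{2, 2}$-shadowing property directly, because $\nondegenerate{G^{-1}}$ (which equals the range of $G$) may well be infinite, so Proposition~\ref{prop:dom-G-finite} cannot be applied to $G^{-1}$. The crucial structural observation is that although $G^{-1}$ may send points into an infinite set, it can only be \emph{reached from} the finite set $\nondegenerate{G}$: for every $x$ we have $G^{-1}\of{x} \subseteq \nondegenerate{G}$, and any $y \in G^{-1}\of{x}$ satisfies both $y \in \nondegenerate{G}$ and $\p{y, x} \in G$, i.e.\ $x \in G\of{y}$. Consequently every trajectory in $\p{X, G^{-1}}$ lies entirely in $\nondegenerate{G}$ after its first coordinate, and it is this finiteness that I would exploit.

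First I would fix an entourage $U$ and build a suitable $V$ using the finiteness of $\nondegenerate{G}$, exactly as in the proof of Proposition~\ref{prop:dom-G-finite}: for each pair $p, q \in \nondegenerate{G}$ with $p \notin G\of{q}$, regularity of $X$ yields an entourage $W_{pq}$ with $W_{pq}\of{p} \intersect G\of{q} = \emptySet$ (the section $G\of{q}$ is closed since $G$ is closed), and for the remaining pairs set $W_{pq} = X \times X$. Then $V := U \intersect \Intersection_{p, q \in \nondegenerate{G}} W_{pq}$ is an entourage, being a finite intersection, with $V \subseteq U$, and it enjoys the separation property that, for $p, q \in \nondegenerate{G}$, whenever a point is $V$-close to $p$ and also lies in $G\of{q}$, then in fact $p \in G\of{q}$.

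Next, given a $\p{V, 2}$-pseudo-orbit $\sequenceOf{x_n}:{n \in \omega}$ in $\p{X, G^{-1}}$, I would extract for each $n$ a witness $y_n \in G^{-1}\of{x_n}$ with $\p{y_n, x_{n+1}} \in V$, and define the candidate shadowing sequence by $z_0 = x_0$ and $z_{n+1} = y_n$ for $n \in \omega$. The shadowing estimate is then immediate: $\p{z_0, x_0}$ lies on the diagonal, and for $n \geq 1$ we have $\p{z_n, x_n} = \p{y_{n-1}, x_n} \in V \subseteq U$, so $\sequenceOf{z_n}:{n \in \omega}$ would $\p{U, 2}$-shadow $\sequenceOf{x_n}:{n \in \omega}$ provided it is a genuine trajectory.

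The main obstacle, and the one nontrivial step, is verifying that $\sequenceOf{z_n}:{n \in \omega}$ really is a trajectory in $\p{X, G^{-1}}$, that is, that $\p{y_n, y_{n-1}} \in G$ for each $n \geq 1$ (the step $\p{y_0, x_0} \in G$ being immediate from $y_0 \in G^{-1}\of{x_0}$). Here I would invoke the separation property of $V$: from $y_n \in G^{-1}\of{x_n}$ we get $x_n \in G\of{y_n}$, while from the previous witness we get $x_n \in V\of{y_{n-1}}$, so $x_n \in G\of{y_n} \intersect V\of{y_{n-1}}$ with $y_{n-1}, y_n \in \nondegenerate{G}$; by the choice of $V$ this forces $y_{n-1} \in G\of{y_n}$, i.e.\ $\p{y_n, y_{n-1}} \in G$. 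This shows $\sequenceOf{z_n}:{n \in \omega} \in T_{G^{-1}}^+\of{x_0}$, so in particular $x_0 \in \legal{G^{-1}}$, completing the verification. Once this snapping of the chosen witnesses into a legitimate backward $G$-chain is in place, the $\p{2, 2}$-shadowing property follows; note the example $G = \set{\p{0, 0}, \p{0, 1}} \union \p{\set{1} \times X}$ shows that the conclusion cannot in general be strengthened to the $\p{1, 1}$-shadowing property.
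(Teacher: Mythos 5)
Your proposal is correct and follows essentially the same route as the paper's proof: the same entourage $V$ built from finitely many separating entourages $W_{pq}$ via regularity, the same extraction of witnesses $y_n \in G^{-1}\of{x_n}$, and the same key step showing that the point $x_n \in G\of{y_n} \intersect V\of{y_{n-1}}$ forces the witnesses to snap into a genuine trajectory of $\p{X, G^{-1}}$ starting at $x_0$. The only differences are cosmetic (index shifts and a direct argument in place of the paper's argument by contradiction).
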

\begin{proof}
Suppose $U \in \collection{U}$. For each $x, y \in \nondegenerate{G}$ with $x \notin G\of{y}$, there exists entourage $W_{xy} \in \collection{U}$ such that $\p{x, z} \notin W_{xy}$ for each $z \in G\of{y}$, which follows by regularity of $X$. For each $x, y \in \nondegenerate{G}$ with $x \in G\of{y}$, we let $W_{xy} = U$. Let $W = \Intersection_{x, y \in \nondegenerate{G}} W_{xy}$, which is an entourage of $X$. Now, we let $V = W \intersect U$. 

Suppose $\sequenceOf{x_n}:{n \in \omega}$ is a $\p{V, 2}$-pseudo-orbit in $\p{X, G^{-1}}$. For each $n \in \omega$, there exists $y_{n+1} \in G^{-1}\of{x_n}$ such that $\p{y_{n+1}, x_{n+1}} \in V$. Let $y_0 = x_0$. We claim $\sequenceOf{y_n}:{n \in \omega}$ $\p{U, 2}$-shadows $\sequenceOf{x_n}:{n \in \omega}$ in $\p{X, G^{-1}}$. We need only show $\sequenceOf{y_n}:{n \in \omega}$ is a trajectory of $y_0$ in $\p{X, G^{-1}}$. To derive a contradiction, suppose there exists $n \in \omega$ such that $y_{n+1} \notin G^{-1}\of{y_n}$ (note $n \geq 1$, since $y_1 \in G^{-1}\of{x_0}$, where $y_0 = x_0$). That is to say, $y_n \notin G\of{y_{n+1}}$.  Hence, $\p{y_n, z} \notin V$ for each $z \in G\of{y_{n+1}}$, by choice of $V$. However, $x_{n} \in G\of{y_{n+1}}$, and also $\p{y_n, x_n} \in V$, a contradiction. Thus, the result follows. 
\end{proof}
\begin{note}
By Proposition~\ref{prop:dom-G-finite}, if $\nondegenerate{G}$ is finite then $\p{X, G}$ has the $\p{1, 1}$-shadowing property, $\p{2, 2}$-shadowing property, and $\p{1, 2}$-shadowing property as well. However, the above examples show that $\p{X, G}$ having the $\p{2, 1}$-shadowing property or $\p{1, 1}$-shadowing with $\nondegenerate{G}$ finite does not imply $\p{X, G^{-1}}$ having the $\p{2, 1}$-shadowing property or $\p{1,1}$-shadowing, respectively.
\end{note}

\begin{figure}[t]
\[
 \begin{tikzpicture}[
       decoration = {markings,
                     mark=at position .5 with {\arrow{Stealth[length=2mm]}}},
       dot/.style = {fill, inner sep=0pt, node contents={},
                     label=#1},
every edge/.style = {draw, postaction=decorate}
                        ]
\node (e) at (2, -.25) {$\color{red}\text{$G = \p{X \times \set{1}} \union \Delta_X$}$};
\node (e1) at (8, -.25) {$\color{red}\text{$G^{-1} = \p{\set{1} \times X} \union \Delta_X$}$};

\draw (4, 0) -- (4, 4);
\draw[blue, very thick] (4, 4) -- (0, 4);
\draw (0, 4) -- (0, 0);
\draw[blue, very thick] (0, 0) -- (4, 4);
\draw (0, 0) -- (4, 0);

\draw (6, 0) -- (10, 0);
\draw[blue, very thick] (10, 0) -- (10, 4);
\draw (10, 4) -- (6, 4);
\draw (6, 0) -- (6, 4);
\draw[blue, very thick] (6, 0) -- (10, 4);
 \end{tikzpicture}
\]
\caption{The relations $G$ and $G^{-1}$ in Example~\ref{ex:G-inverse-11}}\label{fig:G-inverse-11}
\end{figure}

\begin{corollary}
Let $\p{X, G}$ be a CR-dynamical system such that $\Delta_X \subseteq G^n$ for some positive integer $n$. Then,  $\p{X, G}$ has the $\p{2, 1}$-shadowing property if, and only if, $\p{X, G^{-1}}$ has the $\p{2, 1}$-shadowing property. 
\end{corollary}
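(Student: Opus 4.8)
The plan is to reduce the statement to the rigidity results already established for the $\p{2,1}$-shadowing property under the hypothesis $\Delta_X \subseteq G^n$. The first observation is that the hypothesis is symmetric under inversion: since $\p{G^{-1}}^n = \p{G^n}^{-1}$ (reversing a path in $G$ is a path in $G^{-1}$) and $\Delta_X = \Delta_X^{-1}$, the containment $\Delta_X \subseteq G^n$ is equivalent to $\Delta_X \subseteq \p{G^{-1}}^n$, while $\p{G^{-1}}^{-1} = G$. Hence it suffices to prove one implication and then invoke this symmetry to obtain the converse.

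For the forward direction I would assume $\p{X, G}$ has the $\p{2,1}$-shadowing property. By Corollary~\ref{cor:21-implies-function} (whose proof runs through Lemma~\ref{lem:invariance-under-comp} and Proposition~\ref{prop:21-implies-diagonal}), this forces $G^n = \Delta_X$ and $G = \Graph{f}$ for a continuous self-map $f$. Because $G = \Graph{f}$, its relational powers are $G^k = \Graph{f^k}$, so $G^n = \Delta_X$ gives $f^n = \text{id}_X$; in particular $f$ is a homeomorphism with inverse $f^{n-1}$, whence $G^{-1} = \Graph{f}^{-1} = \Graph{f^{-1}}$ and $\p{f^{-1}}^n = \text{id}_X$. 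Thus $G^{-1}$ is again the graph of a self-map satisfying a periodicity identity, which is exactly the setting of the periodic-point results.

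The next step is to transfer shadowing from $f$ to $f^{-1}$. Since $\p{X, \Graph{f}}$ has the $\p{2,1}$-shadowing property, the implication diagram in Proposition~\ref{prop:implication} yields all four $\p{i,j}$-shadowing properties, so by Proposition~\ref{prop:topological-relation-correspondence} the topological dynamical system $\p{X, f}$ has the shadowing property; Theorem~\ref{thm:periodic-shadowing-iff-totally-disconnected} then shows $X$ is totally disconnected. Applying Theorem~\ref{thm:periodic-shadowing-iff-totally-disconnected} once more, now to $f^{-1}$ (which also satisfies $\p{f^{-1}}^n = \text{id}_X$), gives that $\p{X, f^{-1}}$ has the shadowing property, and hence by Proposition~\ref{prop:topological-relation-correspondence} that $\p{X, G^{-1}} = \p{X, \Graph{f^{-1}}}$ has the $\p{2,1}$-shadowing property. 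One may bypass total disconnectedness entirely here: $f^{-1} = f^{n-1}$ is a power of $f$, so Lemma~\ref{lem:invariance-under-comp-topological} transfers shadowing from $f$ to $f^{-1}$ directly. Finally, applying this forward implication to $\p{X, G^{-1}}$—legitimate since $\Delta_X \subseteq \p{G^{-1}}^n$—and using $\p{G^{-1}}^{-1} = G$ yields the converse.

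The corollary is essentially a consequence of results already in hand, so I expect no serious obstacle. The one point that must be handled carefully is the bookkeeping that $\p{2,1}$-shadowing forces $G$ to be the graph of a \emph{homeomorphism}, and not merely of a continuous map, so that $G^{-1}$ is itself a function graph to which the topological theorems apply; this is precisely what the identity $f^n = \text{id}_X$ guarantees.
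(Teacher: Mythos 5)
Your proof is correct and follows essentially the same route as the paper: both reduce to one direction by the symmetry $\Delta_X\subseteq\p{G^{-1}}^n$, invoke Corollary~\ref{cor:21-implies-function} to force $G=\Graph{f}$ with $f^n=\text{id}_X$ (so $f$ is a homeomorphism and $G^{-1}=\Graph{f^{-1}}$), and then transfer shadowing to $f^{-1}$ via total disconnectedness and the classical periodic-map theorem. Your observation that one can bypass total disconnectedness entirely by writing $f^{-1}=f^{n-1}$ and applying Lemma~\ref{lem:invariance-under-comp-topological} is a nice minor streamlining (modulo the trivial case $n=1$), but it does not change the substance of the argument.
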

\begin{proof}
Since $\Delta_X \subseteq G^n$, $\Delta_X \subseteq G^{-n}$. Consequently, it suffices to prove the forward direction. To this end, suppose $\p{X, G}$ has the $\p{2, 1}$-shadowing property. By Corollary~\ref{cor:21-implies-function}, there exists a continuous self-map $f$ on $X$ such that $\Graph{f} = G$ and $f^n = \text{id}_X$. Furthermore, this means $f$ is a bijection, and so $f^{-1}$ is well-defined. By Theorem~\ref{thm:diagonal-totally-disconnected}, $X$ is totally disconnected and consequently $\p{X, G^{-1}}$ has the $\p{1, 2}$-shadowing property.  As $G^{-1} = \Graph{f^{-1}}$, it follows $\p{X, G^{-1}}$ has the $\p{2, 1}$-shadowing property. 
\end{proof}

\begin{example}\label{ex:G-inverse-11}
Let $\p{X, G}$ be a CR-dynamical system, where $X = [0, 1]$ and $G = \p{X \times \set{1}} \union \Delta_X$ (shown in Figure~\ref{fig:G-inverse-11}). By Proposition~\ref{prop:diagonal-plus-horizontal}, $\p{X, G}$ has the $\p{1, 1}$-shadowing property. However, $\p{X, G^{-1}}$ does not have the $\p{1, 2}$-shadowing property. 
\end{example}

\begin{question}
Let $\p{X, G}$ be a CR-dynamical system such that $\Delta_X \subseteq G$. If $\p{X, G}$ has the $\p{2, 2}$-shadowing property, must $\p{X, G^{-1}}$ have the $\p{1, 2}$ or $\p{2, 2}$-shadowing property? 
\end{question}

\begin{corollary}
Let $\p{X, G}$ be a CR-dynamical system such that there exists an isometry $f : X \to X$ with $\Graph{f} \subseteq G$. Then, $\p{X, G}$ has the $\p{2, 1}$-shadowing property if, and only if, $\p{X, G^{-1}}$ has the $\p{2, 1}$-shadowing property. 
\end{corollary}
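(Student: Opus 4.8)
The plan is to reduce the equivalence to the single-valued setting and then feed everything through Theorem~\ref{thm:isometry-mahavier}. The first thing I would note is that the hypothesis is symmetric in $G$ and $G^{-1}$. Since $f$ is an isometry of a compact metric space, it is automatically a bijection whose inverse $f^{-1}$ is again an isometry (an isometric self-embedding of a compact metric space is surjective, a classical fact). Consequently $\Graph{f}^{-1} = \Graph{f^{-1}} \subseteq G^{-1}$, so $G^{-1}$ likewise contains the graph of an isometry, and $\p{G^{-1}}^{-1} = G$. This means it suffices to prove only the forward implication: the converse will follow by applying the forward implication to $\p{X, G^{-1}}$ together with the isometry $f^{-1}$.

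For the forward direction I would argue as follows. Assume $\p{X, G}$ has the $\p{2, 1}$-shadowing property. By Proposition~\ref{prop:21-isometry-implies-single-val} this forces $G = \Graph{f}$, so $G$ is precisely the graph of the isometry $f$. Then Proposition~\ref{prop:topological-relation-correspondence} gives that the topological dynamical system $\p{X, f}$ has the shadowing property, and hence $X$ is totally disconnected by Theorem~\ref{thm:isometry-mahavier}. Since $f^{-1}$ is an isometry of the totally disconnected space $X$, Theorem~\ref{thm:isometry-mahavier} applied to $f^{-1}$ yields that $\p{X, f^{-1}}$ has the shadowing property. Finally, because $G^{-1} = \Graph{f}^{-1} = \Graph{f^{-1}}$, another application of Proposition~\ref{prop:topological-relation-correspondence} shows that $\p{X, G^{-1}}$ has the $\p{2, 1}$-shadowing property, completing the forward direction.

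The step I expect to require the most care is the symmetry reduction itself, namely justifying that $f$ is a bijection and that $\Graph{f}^{-1} = \Graph{f^{-1}}$; this is exactly what legitimises applying the forward direction to $G^{-1}$ and thereby obtaining the converse for free. Everything else is a short chain through results already established in the paper, and the overall structure deliberately mirrors the earlier corollary for the case $\Delta_X \subseteq G^n$, with Corollary~\ref{cor:21-implies-function} and Theorem~\ref{thm:diagonal-totally-disconnected} replaced by their isometry-setting analogues Proposition~\ref{prop:21-isometry-implies-single-val} and Theorem~\ref{thm:isometry-mahavier}.
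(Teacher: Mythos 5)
Your proposal is correct and follows essentially the same route as the paper: reduce to the forward direction via the symmetry $\Graph{f^{-1}} \subseteq G^{-1}$, then chain Proposition~\ref{prop:21-isometry-implies-single-val}, Proposition~\ref{prop:topological-relation-correspondence}, and Theorem~\ref{thm:isometry-mahavier}. The only difference is that you make explicit the (correct) classical fact that an isometric self-map of a compact metric space is a bijection, which the paper uses implicitly.
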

\begin{proof}
Since $\Graph{f} \subseteq G$ where $f : X \to X$ is an isometry, it follows $\Graph{f^{-1}} \subseteq G^{-1}$ where $f^{-1} : X \to X$ is an isometry.  Consequently, it suffices to prove the forward direction. To this end, suppose $\p{X, G}$ has the $\p{2, 1}$-shadowing property. By Proposition~\ref{prop:21-isometry-implies-single-val}, $G = \Graph{f}$. Since $\p{X, G}$ has the $\p{2, 1}$-shadowing property, $\p{X, f}$ has the shadowing property by Proposition~\ref{prop:topological-relation-correspondence}. By Theorem~\ref{thm:isometry-mahavier}, $X$ is totally disconnected and  $\p{X, f^{-1}}$ has the shadowing property. By Proposition~\ref{prop:topological-relation-correspondence}, $\p{X, G^{-1}}$ has the $\p{2, 1}$-shadowing property. 
\end{proof}



We conclude this section by noting recently, Yin~\cite[Theorem $4.7$ and Corollary $4.8$]{yin_shadowing_property_set_valued_map} has uncovered the following relationship in the setting of SV-dynamical systems. 

\begin{theorem}
Let $\p{X, F}$ be an SV-dynamical system, such that $F^{-1}\of{U}$ is open for each open $U \subseteq X$. If $\p{X, F}$ has the $\p{2, 2}$-shadowing property, then $\p{X, F^{-1}}$ has the $\p{2, 2}$-shadowing property. Conversely, if $\p{X, F^{-1}}$ has the $\p{2, 2}$-shadowing property, $F\of{U}$ is open for each open $U \subseteq X$, and $F\of{X} = X$, then $\p{X, F}$ has the $\p{2, 2}$-shadowing property. 
\end{theorem}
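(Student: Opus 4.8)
The plan is to prove the forward implication directly by an orbit–reversal argument, and then obtain the converse for free by applying the forward implication to $F^{-1}$ in place of $F$. Throughout, write $G = \Graph{F}$, so that $\p{X, G^{-1}}$ is the system under study in the forward direction and $G^{-1}\of{x} = F^{-1}\of{x}$. For the forward direction, fix $\epsilon > 0$ and first invoke the $\p{2, 2}$-shadowing property of $\p{X, F}$ at scale $\epsilon/2$ to obtain $\delta_0 > 0$. The hypothesis that $F^{-1}\of{U}$ is open for open $U$ says precisely that $F$ is lower semi-continuous, and since $X$ is compact I would upgrade this to a uniform statement: choose $\delta > 0$ so that whenever $x_n \in F\of{y_{n+1}}$ with $d\of{x_{n+1}, y_{n+1}} \le \delta$, the set $F\of{x_{n+1}}$ meets the $\delta_0$-ball about $x_n$. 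The purpose of this step is that, given a $\p{\delta, 2}$-pseudo-orbit $\sequenceOf{x_n}:{n \in \omega}$ of $\p{X, F^{-1}}$ (so for each $n$ there is $y_{n+1} \in F^{-1}\of{x_n}$, i.e.\ $x_n \in F\of{y_{n+1}}$, with $d\of{x_{n+1}, y_{n+1}} \le \delta$), every reversed finite prefix $\sequence{x_N, x_{N-1}, \ldots, x_0}$ becomes a $\delta_0$-pseudo-orbit of $\p{X, F}$.

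Next, for each $N$ I would extend the reversed prefix to an infinite $F$-pseudo-orbit (appending a genuine $F$-orbit from its last term, which is possible since $F$ has non-empty values) and apply shadowing of $\p{X, F}$ to produce an $F$-orbit that is $\epsilon/2$-close to it. Reversing the relevant initial segment back gives a finite $F^{-1}$-chain $\sequence{z_0^{(N)}, \ldots, z_N^{(N)}}$ with $z_{k+1}^{(N)} \in F^{-1}\of{z_k^{(N)}}$ and $d\of{x_k, z_k^{(N)}} \le \epsilon/2$ for $0 \le k \le N$. To pass to an infinite trajectory I would run a diagonal extraction: since $X$ is compact I can take a subsequence along which $z_k^{(N)} \to z_k$ for every fixed $k$, and since $\Graph{F}^{-1}$ is closed the limit $\sequenceOf{z_k}:{k \in \omega}$ satisfies $z_{k+1} \in F^{-1}\of{z_k}$ for all $k$ and hence lies in $T_{G^{-1}}^+\of{z_0}$, with $d\of{x_k, z_k} \le \epsilon/2 < \epsilon$. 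Thus $z_0$ is a legal point of $\p{X, F^{-1}}$ that $\p{\epsilon, 2}$-shadows $\sequenceOf{x_n}:{n \in \omega}$, establishing the $\p{2, 2}$-shadowing property for $\p{X, F^{-1}}$.

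For the converse I would simply feed $F^{-1}$ into the forward implication. The open-map hypothesis that $F\of{U}$ is open becomes the lower-semicontinuity hypothesis for $F^{-1}$, since $\p{F^{-1}}^{-1}\of{U} = F\of{U}$; the surjectivity $F\of{X} = X$ guarantees $F^{-1}\of{x} \ne \emptySet$ for every $x$, so $\p{X, F^{-1}}$ is genuinely an SV-dynamical system (its values are also closed, being slices of the closed graph $\Graph{F}^{-1}$); and $F^{-1}$ is automatically upper semi-continuous, because $F$ is usc on a compact Hausdorff space, so $F\of{C}$ is compact and hence closed for each closed $C$, which is exactly the condition equivalent to usc of $F^{-1}$. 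Since $\p{F^{-1}}^{-1} = F$, applying the forward implication to $F^{-1}$ yields the $\p{2, 2}$-shadowing property for $\p{X, F}$.

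The hard part will be the bookkeeping in the forward direction: converting the pointwise lower-semicontinuity hypothesis into a single $\delta$ that works uniformly along the whole pseudo-orbit (this is where compactness of $X$ is essential), and then ensuring that the diagonal limit of the finite shadowing chains is a genuinely infinite, legal $F^{-1}$-trajectory rather than a finite approximation. Tracking strict versus non-strict inequalities through the limit is the reason I shadow at scale $\epsilon/2$, so that the limiting bound $\le \epsilon/2$ still gives the required $< \epsilon$.
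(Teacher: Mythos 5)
The paper does not actually prove this theorem: it is stated as a quotation of Yin's work (Theorem $4.7$ and Corollary $4.8$ of the reference cited there), so there is no in-paper argument to compare yours against. Judged on its own merits, your proof is correct. The orbit-reversal strategy --- use lower semicontinuity of $F$ to turn each reversed finite prefix of an $F^{-1}$-pseudo-orbit into a $\p{\delta_0, 2}$-pseudo-orbit of $\p{X, F}$, append a genuine forward orbit so that shadowing applies, reverse the shadowing trajectory back into a finite $F^{-1}$-chain, and pass to an infinite shadowing trajectory by diagonal extraction using compactness and closedness of $\Graph{F}^{-1}$ --- is sound, and deducing the converse by feeding $F^{-1}$ into the forward implication is legitimate once one checks, as you do, that $F\of{X} = X$ makes $F^{-1}$ an SV-dynamical system with closed values and closed (hence usc) graph, that $\p{F^{-1}}^{-1}\of{U} = F\of{U}$, and that $\p{F^{-1}}^{-1} = F$.

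One step deserves more care than your sketch gives it. The uniform lower-semicontinuity statement you need --- a single $\delta$ such that $d\of{y, y'} \le \delta$ and $z \in F\of{y}$ force $F\of{y'}$ to meet the $\delta_0$-ball about $z$ --- does \emph{not} follow from lower semicontinuity and compactness alone. On $X = \set{0} \union \setOf{1/n}:{n \ge 1}$ the map with $F\of{0} = \set{0}$ and $F\of{1/n} = \set{0, 1}$ is lower semicontinuous with closed values, yet no such $\delta$ exists for $\delta_0 < 1$. The standard compactness argument (extract convergent sequences from a putative failure and contradict lower semicontinuity at the limit point) needs the limit of points $z_n \in F\of{y_n}$ to land in $F\of{y}$, i.e.\ it uses the closedness of $\Graph{F}$, which is exactly where the upper semicontinuity built into the SV hypothesis enters. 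This is available in your setting, so there is no gap, but that hypothesis is doing real work and should be invoked explicitly. A final cosmetic remark: your argument is written for compact metric spaces, which is the natural setting of the cited result; in a non-metrizable uniform setting the diagonal extraction would need to be replaced by a finite-intersection-property argument as in the paper's Theorem~\ref{thm:illegal-characterisation}.
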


\section{Thoughts for future work}\label{section:future}
Throughout this paper, we have posed three unanswered questions, which we restate here.

\question{Suppose $\p{X, G}$ is a CR-dynamical system with $\Delta_X \subseteq G^n$ for some positive integer $n$, where $\p{X, \collection{U}}$ is a uniform space. If $X$ is totally disconnected, must $\p{X, G}$ have the $\p{1, 2}$-shadowing property?}

\question{If $\p{X, G}$ has the $\p{2, 1}$-shadowing property, must $\p{X_G^+, \sigma_G^+}$ have the shadowing property?}

\begin{question}
Let $\p{X, G}$ be a CR-dynamical system such that $\Delta_X \subseteq G$. If $\p{X, G}$ has the $\p{2, 2}$-shadowing property, must $\p{X, G^{-1}}$ have the $\p{1, 2}$ or $\p{2, 2}$-shadowing property? 
\end{question}

\smallskip

It seems likely that the answer for the second question is negative, so it would be interesting to see what assumptions can be added to a CR-dynamical system $\p{X, G}$ with the $\p{2, 1}$-shadowing property, to force $\p{X_G^+, \sigma_G^+}$ to have the shadowing property. In particular, we would like to add conditions that do not force $G$ to be the graph of a single-valued function, as is the case when we assume $G$ contains the graph of an isometry (see Proposition~\ref{prop:21-isometry-implies-single-val}). If possible, it would be desirable to assume $\p{X, G}$ has the $\p{2, 2}$-shadowing property rather than the $\p{2, 1}$-shadowing property. In the context of set-valued dynamical systems, it has been shown recently by Yin~\cite[Theorem $4.18$]{yin_shadowing_property_set_valued_map} that such an equivalence can be made, yielding the following. 

\begin{theorem}
Let $\p{X, F}$ be an SV-dynamical system, such that $F^{-1}\of{U}$ is open for every open $U \subseteq X$. Then, $\p{X, F}$ has the $\p{2, 2}$-shadowing property if, and only if, $\p{X_F^+, \sigma_F^+}$ has the shadowing property. 
\end{theorem}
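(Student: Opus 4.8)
The plan is to prove both implications by passing between the base system $\p{X, G}$, where $G = \Graph{F}$ (so that $X_F^+ = X_G^+$ and $\sigma_F^+ = \sigma_G^+$), and the Mahavier dynamical system $\p{X_G^+, \sigma_G^+}$ via the first-coordinate projection $\pi_0 : X_G^+ \to X$. Two facts hold throughout: since $F$ is a set-valued function, $F\of{x} \isNonempty$ for every $x$, so $\legal{G} = X = \nondegenerate{G}$, every $\sequenceOf{y_n}:{n \in \omega} \in T_G^+\of{y}$ is a genuine point of $X_G^+$, and a $\p{\delta, 2}$-pseudo-orbit is just a sequence $\sequenceOf{x_n}:{n \in \omega}$ in $X$ admitting $z_{n+1} \in F\of{x_n}$ with $d\of{x_{n+1}, z_{n+1}} \leq \delta$. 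The openness hypothesis on $F^{-1}$ (lower semi-continuity of $F$) will be spent only on the converse.

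For the forward implication, suppose $\p{X, G}$ has the $\p{2, 2}$-shadowing property and let $\sequenceOf{\mathbf{x}^{\p{n}}}:{n \in \omega}$ be a $\delta$-pseudo-orbit in $\p{X_G^+, \sigma_G^+}$, with $\mathbf{x}^{\p{n}} = \sequence{x^{\p{n}}_0, x^{\p{n}}_1, \ldots}$. The $k = 0$ term of $\rho\of{\sigma_G^+\of{\mathbf{x}^{\p{n}}}, \mathbf{x}^{\p{n+1}}} \leq \delta$ gives $d\of{x^{\p{n}}_1, x^{\p{n+1}}_0} \leq 2\delta$, and since $x^{\p{n}}_1 \in F\of{x^{\p{n}}_0}$, the base sequence $\sequenceOf{x^{\p{n}}_0}:{n \in \omega}$ is a $\p{2\delta, 2}$-pseudo-orbit in $\p{X, G}$. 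Applying $\p{2, 2}$-shadowing with tolerance $\fraction \epsilon/2$ yields a legal $y$ and a trajectory $\mathbf{y} = \sequence{y_0, y_1, \ldots} \in X_G^+$ with $d\of{x^{\p{n}}_0, y_n} < \fraction \epsilon/2$ for all $n$. To see $\mathbf{y}$ $\epsilon$-shadows the pseudo-orbit, I estimate $\rho\of{\sigma_G^{+n}\of{\mathbf{y}}, \mathbf{x}^{\p{n}}} = \sum_k \Fraction d\of{y_{n+k}, x^{\p{n}}_k}/{2^{k+1}}$ by inserting $x^{\p{n+k}}_0$: the piece $\sum_k \Fraction d\of{y_{n+k}, x^{\p{n+k}}_0}/{2^{k+1}}$ is below $\fraction \epsilon/2$, while $\sum_k \Fraction d\of{x^{\p{n+k}}_0, x^{\p{n}}_k}/{2^{k+1}}$ is bounded by $\delta$ using the telescoping inequality $d\of{x^{\p{n}}_k, x^{\p{n+k}}_0} \leq \sum_{j=0}^{k-1} d\of{x^{\p{n+j}}_{k-j}, x^{\p{n+j+1}}_{k-j-1}}$ followed by swapping the order of summation, exactly as in the $G = X \times X$ computation earlier in this section. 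Choosing $\delta \leq \fraction \epsilon/2$ closes this direction; note it uses neither lower semi-continuity nor compactness beyond what is already assumed.

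For the converse I first upgrade lower semi-continuity to a uniform form: since $X$ is compact metric, $F$ is usc, and $F^{-1}\of{U}$ is open for every open $U$, for each $\eta > 0$ there is $\gamma > 0$ such that $d\of{x, x'} < \gamma$ implies every point of $F\of{x}$ lies within $\eta$ of $F\of{x'}$. (If this failed for some $\eta$, extract $x_j, x'_j \to a$ with $d\of{x_j, x'_j} \to 0$ and $y_j \in F\of{x_j} \to b$; the closed graph gives $b \in F\of{a}$, while lower semi-continuity at $a$ forces $F\of{x'_j}$ to meet a small ball about $b$, contradicting $d\of{y_j, F\of{x'_j}} \geq \eta$.) Granting this, assume $\p{X_G^+, \sigma_G^+}$ has shadowing. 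Given $\epsilon$, let $\delta_0$ witness shadowing for tolerance $\fraction \epsilon/2$, fix $J$ with $\operator{diam}\of{X} \cdot 2^{-J} < \fraction {\delta_0}/2$, and design a budget $\alpha_J = \fraction{\delta_0}/2 > \alpha_{J-1} > \cdots > \alpha_1 =: \delta$ backwards so that $\alpha_k < \gamma\of{\alpha_{k+1}}$. Given a $\p{\delta, 2}$-pseudo-orbit $\sequenceOf{x_n}:{n \in \omega}$ with witnesses $z_{n+1} \in F\of{x_n}$, I lift it to trajectories $\mathbf{x}^{\p{n}} = \sequence{x_n, x^{\p{n}}_1, \ldots}$ by induction on the depth $k$, simultaneously in $n$: set $x^{\p{n}}_0 = x_n$, $x^{\p{n}}_1 = z_{n+1}$, and, maintaining the invariant $d\of{x^{\p{n}}_k, x^{\p{n+1}}_{k-1}} \leq \alpha_k$ for $k \leq J$, use uniform lower semi-continuity to pick $x^{\p{n}}_{k+1} \in F\of{x^{\p{n}}_k}$ within $\alpha_{k+1}$ of $x^{\p{n+1}}_k \in F\of{x^{\p{n+1}}_{k-1}}$; beyond depth $J$ extend each trajectory arbitrarily. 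The weighting gives $\rho\of{\sigma_G^+\of{\mathbf{x}^{\p{n}}}, \mathbf{x}^{\p{n+1}}} < \fraction{\delta_0}/2 + \operator{diam}\of{X} \cdot 2^{-J} < \delta_0$, so the lift is a $\delta_0$-pseudo-orbit. Shadowing in $X_G^+$ yields $\mathbf{y} \in X_G^+$ with $\rho\of{\sigma_G^{+n}\of{\mathbf{y}}, \mathbf{x}^{\p{n}}} < \fraction \epsilon/2$; reading off the $k = 0$ coordinate gives $d\of{y_n, x_n} < \epsilon$, and since $\mathbf{y}$ is a trajectory of $y_0$, the point $y_0$ $\p{\epsilon, 2}$-shadows $\sequenceOf{x_n}:{n \in \omega}$.

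The main obstacle is the lifting step of the converse: an element of $X_G^+$ must be an honest trajectory, so the lift cannot be chosen coordinatewise independently, and naive tracking lets the per-coordinate error compound. Two ingredients defuse this: the uniform lower semi-continuity lemma, which is precisely where the openness of $F^{-1}$ is consumed and which furnishes a single $\gamma$ valid for every pair $\p{x, x'}$; and running the induction on depth rather than on $n$, with the budget designed backwards from $\alpha_J$, so that the finitely many tracked coordinates stay below $\fraction{\delta_0}/2$ while the uncontrolled tail is absorbed by the geometric factor $2^{-k}$ in $\rho$. The only delicate bookkeeping is checking that $\gamma\of{\alpha_{k+1}} > \alpha_k$ is simultaneously realisable for all indices $n$, which holds exactly because the uniform lemma is a statement with one $\gamma$ for all base points.
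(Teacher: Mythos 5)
The paper does not actually prove this theorem: it appears in Section~\ref{section:future} as a quotation of Yin's Theorem $4.18$ from~\cite{yin_shadowing_property_set_valued_map}, so there is no in-paper argument to compare yours against. Judged on its own, your proof is correct. The forward direction is the projection-plus-telescoping estimate that the paper itself carries out for the special case $G = X \times X$ in Section~\ref{section:mahavier} (the interchange of summation that turns the inner sum back into $\rho\of{\sigma_G^+\of{\mathbf{x}^{\p{n}}}, \mathbf{x}^{\p{n+1}}} \leq \delta$), and your converse correctly isolates where the hypothesis on $F^{-1}$ is spent: the compactness argument upgrading lower semi-continuity to a uniform modulus $\gamma\of{\eta}$ is sound (closedness of $\Graph{F}$ gives $b \in F\of{a}$, and $a$ lying in the open set $F^{-1}\of{B_{\eta/2}\of{b}}$ gives the contradiction), and the depth-indexed lifting with the backwards budget $\alpha_J = \delta_0/2 > \cdots > \alpha_1 = \delta$ satisfying $\alpha_k < \gamma\of{\alpha_{k+1}}$, truncated at a depth $J$ with $\operatorname{diam}\of{X} \cdot 2^{-J} < \delta_0/2$, is exactly the right device for stopping the per-coordinate error from compounding while staying inside $X_G^+$.

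One point deserves extra scrutiny. As you observe, your forward implication uses neither the openness of $F^{-1}$ nor anything specific to SV-systems: the projected sequence $\sequenceOf{x_0^{\p{n}}}:{n \in \omega}$ lies in $\nondegenerate{G}$ automatically because $x_1^{\p{n}} \in G\of{x_0^{\p{n}}}$, and everything after that is the triangle inequality. Taken at face value, this gives an affirmative answer to the paper's open question ``if $\p{X, G}$ has the $\p{2, 2}$-shadowing property, must $\p{X_G^+, \sigma_G^+}$ have the shadowing property?'' for arbitrary CR-dynamical systems on compact metric spaces. I cannot find a flaw in the computation, but since it would settle a question the author explicitly leaves open, you should isolate it as a separate proposition and re-verify it carefully. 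Two small housekeeping items: in the forward direction you need $\delta \leq \min\set{\delta'/2, \epsilon/2}$, where $\delta'$ is the $\p{2, 2}$-modulus for $\epsilon/2$ (you state only the second constraint explicitly); and in the converse, when you pick $x_{k+1}^{\p{n}} \in F\of{x_k^{\p{n}}}$ ``within $\alpha_{k+1}$'' of $x_k^{\p{n+1}}$, either invoke closedness of $F\of{x_k^{\p{n}}}$ to realise the distance or simply carry strict inequalities throughout the budget.
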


We now pose further questions within the context of shadowing in the infinite Mahavier product. 

\question{If $\p{X, G}$ has the $\p{2, 2}$-shadowing property, must $\p{X_G^+, \sigma_G^+}$ have the shadowing property?}

\question{If $\p{X_G^+, \sigma_G^+}$ has the shadowing property, must $\p{X, G}$ have the $\p{1, 1}$ or $\p{2, 2}$-shadowing properties?}



\smallskip

As for a different direction, it would be interesting to see the relationship between the $\p{i, j}$-shadowing properties for $\p{X, G}$ and the $\p{i, j}$-shadowing properties for $\p{\Ilim \p{X, G}, \sigma_G}$, where $\sigma_G$ is a closed relation on the inverse limit $\Ilim \p{X, G}$, defined by 
\[
\sigma_G\of{\sequenceOf{x_n}:{n \in \omega}} = \setOf{\sequence{y, x_0, x_1, x_2, \ldots}}:{\p{x_0, y} \in G}. 
\]
In this scenario, a first step could be an attempt to generalise the following classical result~\cite[Theorem $1.3$]{chen-li}. 

\begin{theorem}[{Theorem $1.3$~\cite{chen-li}}]
Let $X$ be a compact metric space and $f : X \to X$ be a continuous surjective map. Let $\sigma_f$ be the shift map on the inverse limit space $\Ilim \p{X, f}$. Then, $\p{X, f}$ has the shadowing property if, and only if, $\p{\Ilim \p{X, f}, \sigma_f}$ has the shadowing property.  
\end{theorem}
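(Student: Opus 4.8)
The plan is to route both implications through the projection $\pi_0 \colon \Ilim\p{X, f} \to X$, $\pi_0\of{\seq{x_0, x_1, \ldots}} = x_0$, which is a factor map ($\pi_0 \compose \sigma_f = f \compose \pi_0$, using $\Delta_X$-free surjectivity to make $\sigma_f$ a homeomorphism), together with the explicit coordinate description of the shift: for $\mathbf{x} = \seq{x_0, x_1, \ldots}$ one has $\sigma_f^n\of{\mathbf{x}}_k = f^{n-k}\of{x_0}$ when $k \le n$ and $\sigma_f^n\of{\mathbf{x}}_k = x_{k-n}$ when $k > n$. Because $\rho$ weights coordinate $k$ by $2^{-\p{k+1}}$, for a fixed $\epsilon$ it suffices to control coordinates $0, \ldots, N$ where $2^{-\p{N+1}}\operatorname{diam}\of{X} < \Fraction \epsilon / 2$; and since $X$ is compact, Heine--Cantor converts a tolerance on these finitely many coordinates into a single base tolerance via uniform continuity of $f, f^2, \ldots, f^N$. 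These two facts (weighting plus forward uniform continuity) are the backbone of both directions.

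For the implication that shadowing of $\p{X, f}$ yields shadowing of $\p{\Ilim\p{X, f}, \sigma_f}$, I would start from an extension $\delta$-pseudo-orbit $\sequenceOf{\mathbf{x}^{\p{n}}}:{n \in \omega}$ and read off the base sequence $\sequenceOf{x^{\p{n}}_0}:{n \in \omega}$, which is a $2\delta$-pseudo-orbit for $f$ (its defect is exactly the coordinate-$0$ term of $\rho$). The key structural observation is an \emph{anti-diagonal} estimate: the pseudo-orbit condition forces $x^{\p{n}}_{k-1} \approx x^{\p{n+1}}_{k}$, whence $x^{\p{n}}_k$ lies within $O\of{2^{N}\delta}$ of $x^{\p{n-k}}_0$ for all $k \le N$. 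The trick is then to \emph{reveal the past}: prepend the length-$N$ backward orbit $x^{\p{0}}_N, x^{\p{0}}_{N-1}, \ldots, x^{\p{0}}_0$ (an \emph{exact} $f$-orbit, by the inverse-limit relation) to the base pseudo-orbit, shadow this lengthened base pseudo-orbit by a point $q$ using shadowing of $\p{X, f}$, and reconstitute a point $\mathbf{r} \in \Ilim\p{X, f}$ whose forward part is the orbit of $q$ and whose backward tail is any preimage chain (available by surjectivity). Splitting into the cases $k \le n$ and $k > n$, I would check that $\sigma_f^n\of{\mathbf{r}}$ agrees with $\mathbf{x}^{\p{n}}$ to within $\Fraction \epsilon / 2$ on coordinates $0, \ldots, N$, the anti-diagonal estimate supplying the match and the weighting absorbing the deep tail.

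For the converse, the natural route is to lift: given a base $\delta$-pseudo-orbit $\sequenceOf{z_n}:{n \in \omega}$, choose for each $n$ a backward orbit $\mathbf{w}^{\p{n}} \in \Ilim\p{X, f}$ with $\pi_0\of{\mathbf{w}^{\p{n}}} = z_n$ (surjectivity), arrange $\sequenceOf{\mathbf{w}^{\p{n}}}:{n \in \omega}$ to be a small-defect $\sigma_f$-pseudo-orbit, shadow it in the extension, and project the shadowing point by $\pi_0$; since $\Fraction 1 / 2 \, d\of{\pi_0\of{\mathbf{a}}, \pi_0\of{\mathbf{b}}} \le \rho\of{\mathbf{a}, \mathbf{b}}$, a projected extension-shadow is automatically a base-shadow of $\sequenceOf{z_n}:{n \in \omega}$.

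The main obstacle is precisely this lift. Forcing each $\mathbf{w}^{\p{n}}$ to be a genuine point of $\Ilim\p{X, f}$ (so the inverse-limit relation holds \emph{exactly}) while also demanding backward coherence $\mathbf{w}^{\p{n+1}}_k \approx \mathbf{w}^{\p{n}}_{k-1}$ forces the zeroth coordinates to trace an actual $f$-orbit, whereas they must instead trace the pseudo-orbit $\sequenceOf{z_n}:{n \in \omega}$; concretely, the unavoidable coordinate-$1$ defect of any lift is bounded below by $\min\setOf{d\of{a, z_n}}:{a \in f^{-1}\of{z_{n+1}}}$, i.e.\ by how far $z_{n+1}$ is from having a preimage near $z_n$. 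When $f$ is open this quantity is uniformly small and the lift (hence the converse) goes through cleanly, but for a general continuous surjection uniform continuity controls only \emph{forward} images and is of no help here, so this backward, non-open behaviour is the crux. I would attempt to dispose of it either by an openness-type reduction, or, failing that, by arguing the contrapositive and manufacturing an unshadowable extension pseudo-orbit directly from an unshadowable base one, where the surjectivity of $f$ supplies the needed freedom in choosing backward coordinates.
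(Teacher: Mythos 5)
First, a remark on context: the paper does not prove this statement at all — it is quoted verbatim from Chen--Li as a known classical result used to motivate future work — so your attempt has to be judged on its own merits rather than against an in-paper argument.

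Your forward implication is essentially correct. The coordinate formula for $\sigma_f^n$, the reduction to coordinates $0,\ldots,N$, the anti-diagonal estimate $d\of{x^{\p{n}}_k, x^{\p{n-k}}_0} \le 2^{k+2}\delta$, and the trick of prepending the exact backward segment $x^{\p{0}}_N,\ldots,x^{\p{0}}_1$ to the base pseudo-orbit before shadowing all fit together: the shadowing point $q$ of the lengthened base pseudo-orbit reconstitutes to $\mathbf{r}$ with $r_j = f^{N-j}\of{q}$ for $j \le N$, and both cases $k \le n$ and $n < k \le N$ close up. Here $N$ depends only on $\epsilon$ and $\delta$ is chosen last, so there is no circularity.

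The converse, however, is left with a genuine gap. You correctly identify the obstruction — a lift $\mathbf{w}^{\p{n}}$ with $\pi_0\of{\mathbf{w}^{\p{n}}} = z_n$ forces its first backward coordinate to be an \emph{exact} preimage of $z_{n+1}$ near $z_n$, which a general continuous surjection need not supply — but neither of your proposed escapes resolves it: an openness hypothesis is not available, and the contrapositive route is only gestured at. The missing idea is to abandon the constraint $\pi_0\of{\mathbf{w}^{\p{n}}} = z_n$ and instead lift with a \emph{time delay}: first fix $\epsilon' = \Fraction \epsilon / 8$, obtain the $\sigma_f$-shadowing modulus $\delta'$ for $\epsilon'$, \emph{then} choose $N$ with $\operatorname{diam}\of{X}\, 2^{-\p{N+1}} < \Fraction {\delta'} / 2$, and finally choose $\delta$ so small that $\max_{j \le N} \omega_{f^j}\of{\delta} < \Fraction {\delta'} / 2$ and $\sum_{j < N}\omega_{f^j}\of{\delta} < \Fraction \epsilon / 2$, where $\omega_{f^j}$ is a modulus of uniform continuity of $f^j$. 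Given a base $\delta$-pseudo-orbit, first extend it backward by $N$ exact preimage steps of $z_0$ (surjectivity), call the result $\sequenceOf{\tilde z_n}:{n \in \omega}$, pick arbitrary $\mathbf{v}^{\p{n}} \in \Ilim\p{X,f}$ over $\tilde z_n$, and set $\mathbf{w}^{\p{n}} := \sigma_f^N\of{\mathbf{v}^{\p{n}}}$. The first $N+1$ coordinates of $\mathbf{w}^{\p{n}}$ are the exact orbit segment $f^N\of{\tilde z_n}, \ldots, \tilde z_n$, so consecutive defects on coordinates $k \le N$ are bounded by $\omega_{f^{N-k}}\of{\delta}$, while the wholly uncontrolled deeper coordinates contribute at most $\operatorname{diam}\of{X}\,2^{-\p{N+1}} < \Fraction {\delta'} / 2$ — this is exactly why $N$ must be chosen \emph{after} $\delta'$, the point your scheme misses by tying $N$ to $\epsilon$. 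Shadowing $\sequenceOf{\mathbf{w}^{\p{n}}}:{n\in\omega}$ by $\mathbf{r}$ and reading off coordinate $0$ gives $d\of{f^n\of{r_0}, f^N\of{\tilde z_n}} < 2\epsilon'$, and since $d\of{f^N\of{\tilde z_n}, \tilde z_{n+N}} = d\of{f^N\of{\tilde z_n}, z_n} < \Fraction \epsilon / 2$, the point $r_0$ $\epsilon$-shadows $\sequenceOf{z_n}:{n\in\omega}$. Without this (or an equivalent) device the converse direction of your proposal does not go through.
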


Very recently, progress on this relationship has been made in the set-valued dynamics setting by Yin~\cite[Theorem $4.20$]{yin_shadowing_property_set_valued_map}, who gives us the following. 

\begin{theorem}
Let $\p{X, F}$ be an SV-dynamical system, such that $F^{-1}\of{U}$ is open for each open $U \subseteq X$, and $F\of{X} = X$. Then, $\p{X, F}$ has the $\p{2, 2}$-shadowing property if, and only if, $\p{\Ilim \p{X, F}, \sigma_F}$ has the $\p{2, 2}$-shadowing property. 
\end{theorem}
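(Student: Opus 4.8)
The plan is to prove the two implications separately by transferring pseudo-orbits and shadowing points between $\p{X, F}$ and $\p{\Ilim\p{X, F}, \sigma_F}$ through the zeroth-coordinate projection $\pi_0$ and a lifting construction, working throughout with the metric $\rho\p{\mathbf{x}, \mathbf{y}} = \sum_{j} d\p{x_j, y_j}/2^{j+1}$ on $\Ilim\p{X, F} \subseteq \prod X$. Two structural facts drive everything. First, if $\langle \mathbf{x}^{(n)} \rangle$ is a $\sigma_F$-trajectory then, since each step prepends some $z \in F\p{x_0^{(n)}}$ as the new zeroth coordinate, $\langle \pi_0\p{\mathbf{x}^{(n)}} \rangle$ is a genuine forward $F$-trajectory; dually, applying $\pi_0$ to a $\p{\delta, 2}$-pseudo-orbit of $\sigma_F$ yields a $\p{2\delta, 2}$-pseudo-orbit of $\p{X, F}$ (the zeroth coordinate carries weight $1/2$). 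Second, prepending contracts tails: for $\mathbf{a}, \mathbf{b} \in \Ilim\p{X, F}$ and prepended points $a', b'$ one has $\rho\p{\p{a', \mathbf{a}}, \p{b', \mathbf{b}}} = \tfrac12 d\p{a', b'} + \tfrac12 \rho\p{\mathbf{a}, \mathbf{b}}$.

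For the direction $\p{X, F} \Rightarrow \p{\Ilim\p{X, F}, \sigma_F}$, I would take a $\p{\delta', 2}$-pseudo-orbit $\langle \mathbf{x}^{(n)} \rangle$ in the inverse limit, project it to a base $\p{2\delta', 2}$-pseudo-orbit $\langle \xi_n \rangle$, and use base $\p{2, 2}$-shadowing to obtain a forward $F$-trajectory $\langle \eta_n \rangle$ with $d\p{\xi_n, \eta_n} < \epsilon$. Using $F\p{X} = X$, lift $\eta_0$ to a point $\mathbf{y} \in \Ilim\p{X, F}$ and let $\mathbf{y}^{(n)}$ be the $\sigma_F$-trajectory prepending $\eta_1, \eta_2, \dots$. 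Writing $r_n = \rho\p{\mathbf{x}^{(n)}, \mathbf{y}^{(n)}}$, the contraction property together with the pseudo-orbit bound and $d\p{z_n, \eta_{n+1}} \le d\p{z_n, \xi_{n+1}} + d\p{\xi_{n+1}, \eta_{n+1}} \le 2\delta' + \epsilon$ gives $r_{n+1} \le \tfrac12 r_n + \p{2\delta' + \tfrac{\epsilon}{2}}$, whence $r_n \le 2^{-n} r_0 + \p{4\delta' + \epsilon}$ for all $n$. Choosing $\delta', \epsilon$ small, and choosing the backward tail of $\mathbf{y}$ to track $\mathbf{x}^{(0)}$ on enough coordinates to make $r_0$ small, forces every $r_n$ below the target, so $\mathbf{y}$ $\p{2, 2}$-shadows $\langle \mathbf{x}^{(n)} \rangle$. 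The point of the recursion is that the naive fear of error accumulating over the deep coordinates is defused by the $\tfrac12$ tail-contraction.

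For the converse, the natural route is to lift a base $\p{\delta, 2}$-pseudo-orbit $\langle \xi_n \rangle$ (with witnesses $w_n \in F\p{\xi_n}$, $d\p{\xi_{n+1}, w_n} \le \delta$) to a $\p{\delta', 2}$-pseudo-orbit $\langle \mathbf{x}^{(n)} \rangle$ of $\sigma_F$ whose zeroth coordinates stay within $\delta$ of $\langle \xi_n \rangle$, apply inverse-limit $\p{2, 2}$-shadowing, and then read off the zeroth coordinates of the shadowing $\sigma_F$-trajectory, which by the first structural fact form a forward $F$-trajectory that $\p{2, 2}$-shadows $\langle \xi_n \rangle$. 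I expect the lift to be the main obstacle: each $\mathbf{x}^{(n)}$ must be a genuine backward $F$-orbit (element of $\Ilim\p{X, F}$) with zeroth coordinate near $\xi_n$ and with $\tau\p{\mathbf{x}^{(n+1)}} \approx \mathbf{x}^{(n)}$, where $\tau = \sigma_F^{-1}$ is the single-valued continuous deletion shift; producing these coherent backward orbits amounts to turning the approximate backward ``diagonal'' $\p{\xi_n, \xi_{n-1}, \dots}$ into honest backward orbits. This is exactly where the hypotheses enter: $F\p{X} = X$ guarantees backward orbits exist, lower semicontinuity (openness of $F^{-1}\p{U}$) lets one realise approximate preimages, and because $\rho$ discounts deep coordinates one needs to match only finitely many coordinates at each step, so a compactness/diagonal argument should close the construction.

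Finally, I would note an alternative that may streamline the converse: by the quoted equivalence that $\p{X, F}$ has $\p{2, 2}$-shadowing if and only if $\p{X_F^+, \sigma_F^+}$ has shadowing, and since $\sigma_F^+$ is a continuous surjection (again using $F\p{X} = X$), one can attempt to route through the classical single-valued inverse-limit theorem of Chen--Li applied to $\sigma_F^+$. The remaining gap is to identify the $\p{2, 2}$-shadowing of $\p{\Ilim\p{X, F}, \sigma_F}$ with shadowing in that single-valued picture, which I view as the same coherence difficulty in disguise and therefore the genuine hard point of the theorem.
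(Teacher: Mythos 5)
A preliminary remark: the paper does not prove this statement at all --- it is quoted from Yin as Theorem $4.20$ of the cited work, as motivation for the closing questions --- so there is no in-paper proof to compare your route against; your argument has to stand on its own, and at present it does not. In the forward direction, the recursion $r_{n+1} \le \tfrac12 r_n + \p{2\delta' + \tfrac{\epsilon}{2}}$ is sound, but the whole argument hinges on making $r_0 = \rho\p{\mathbf{x}^{(0)}, \mathbf{y}}$ small, and your only remark on this is that one should ``choose the backward tail of $\mathbf{y}$ to track $\mathbf{x}^{(0)}$.'' That choice is not available to you: base shadowing hands you a point $\eta_0$ near $\pi_0\p{\mathbf{x}^{(0)}}$ together with a \emph{forward} trajectory, while $F\of{X} = X$ only guarantees that \emph{some} backward orbit of $\eta_0$ exists, not one whose first $N$ coordinates approximate $x^{(0)}_1, \ldots, x^{(0)}_N$; producing a backward orbit of a nearby point that tracks a prescribed backward orbit is a lower-semicontinuity statement about $F^{-1}$, not about $F$. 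The step is repairable, but it needs an idea you have not supplied --- for instance, prepend the exact segment $x^{(0)}_N, \ldots, x^{(0)}_1, x^{(0)}_0$ (a $0$-pseudo-orbit, since consecutive terms are genuine $F$-steps) to the projected pseudo-orbit before invoking base shadowing, with $N$ fixed in advance so that $2^{-\p{N+1}}\operatorname{diam}\of{X} < \epsilon$; the shadowing trajectory then supplies the needed backward segment of $\eta_0$ itself.

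The converse direction has the same difficulty in a more serious form, and here your appeal to the hypothesis is misdirected: openness of $F^{-1}\of{U}$ for every open $U$ is lower semicontinuity of $F$, which lets you perturb $x$ and follow $y \in F\of{x}$ \emph{forward} to a nearby point of $F\of{x'}$; it does not ``realise approximate preimages.'' What your lift actually requires --- given $\xi_{n+1}$ within $\delta$ of some $w_n \in F\of{\xi_n}$, find $v$ near $\xi_n$ with $\xi_{n+1} \in F\of{v}$, and then iterate this down the whole backward tail --- is lower semicontinuity of $F^{-1}$, i.e.\ openness of $F\of{U}$ for open $U$, which is not among the hypotheses. Since $\rho$ compares on the order of $\log\p{1/\delta'}$ coordinates, this coherence of backward tails is precisely the content of the theorem and cannot be closed by ``a compactness/diagonal argument'' in the abstract. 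Your alternative route through the quoted equivalence with $\p{X_F^+, \sigma_F^+}$ and the Chen--Li theorem founders, as you yourself note, on the same identification. In short, you have correctly located the hard point of the theorem in both directions, but you have crossed it in neither.
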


There are many variations of the shadowing property in the classical setting of topological dynamical systems that would be interesting to explore in the context of CR-dynamical systems. These include, but are not limited to, limit shadowing and average shadowing. Closely related to the concept of shadowing is the specification property. In the setting of set-valued dynamics, the specification property has been generalised by Raines and Tennant~\cite{specification}. We note that the specification property is currently being generalised to CR-dynamical systems by Iztok Banič, Goran Erceg, Judy Kennedy, and Ivan Jelić~\cite{jelic2025}. In future, it would be nice to see how shadowing and specification interact in CR-dynamical systems. 


\bibliographystyle{plain}
\bibliography{Main}

\end{document}